\documentclass[a4paper,12pt]{scrartcl}

\usepackage[utf8]{inputenc}
\usepackage[english]{babel} 
\usepackage{amsmath,bm}
\usepackage{amssymb}
\usepackage{amsthm}
\usepackage{amsfonts}
\usepackage[mathscr]{eucal}
\usepackage{graphicx}
\usepackage{subfigure}
\usepackage{url}
\usepackage{xcolor}
\usepackage{algorithmic}
\usepackage{algorithm,booktabs}
\usepackage[mathscr]{eucal}
\usepackage[labelfont=bf]{caption}
\usepackage{tikz}
\usepackage{nicefrac}
\usepackage{apptools}
\usepackage{chngcntr}
\usepackage{multirow}
\usepackage{array}
\usepackage{siunitx}
\usepackage{adjustbox}
\usepackage[parfill]{parskip}
\usepackage{hyphenat}
\usepackage[all]{nowidow}


\newcommand{\bx}{{\boldsymbol x}}
\newcommand{\bn}{{\boldsymbol n}}
\newcommand{\bs}{{\boldsymbol s}}

\newcommand{\dx}{{\,\mathrm d\bx}}
\newcommand{\ds}{{\,\mathrm d\bs}}

\newcommand{\ubar}{\bar{u}}
\newcommand{\util}{\tilde{u}}

\newcommand{\yid}{y^{\mathsf{id}}}
\newcommand{\yidshift}{\tilde{y}^{\mathsf{id}}}

\newcommand{\ynadwb}{y^{\mathsf{nad,(w)}}}

\newcommand{\ynadIw}{y^{\textsl{nad,I,w}}}

\newcommand{\tD}{t^D}
\newcommand{\tDI}{t^{D^I}}

\newcommand{\R}{\mathbb{R}}
\newcommand{\N}{\mathbb{N}}

\newcommand{\Us}{\mathscr{U}}

\newcommand{\Xad}{\mathscr{X}_{\mathsf{ad}}}

\newcommand{\Uad}{\mathscr{U}_{\mathsf{ad}}}
\newcommand{\Ueq}{\mathscr{U}_{\mathsf{eq}}}

\newcommand{\Uopt}{\mathscr{U}_{\mathsf{opt}}}
\newcommand{\Uoptw}{\mathscr{U}_{\mathsf{opt,w}}}
\newcommand{\Utiloptw}{\tilde{\mathscr{U}}_{\mathsf{opt,w}}}

\newcommand{\Uoptwnum}{\mathscr{U}_{\mathsf{opt,w}}^{\mathsf{num}}}

\newcommand{\Uoptloc}{\mathscr{U}_{\mathsf{opt,loc}}}
\newcommand{\Uoptlocb}{\mathscr{U}_{\mathsf{opt,(loc)}}}
\newcommand{\Uoptwloc}{\mathscr{U}_{\mathsf{opt,w,loc}}}
\newcommand{\Uoptwbloc}{\mathscr{U}_{\mathsf{opt,(w),loc}}}

\newcommand{\Uoptwblocb}{\mathscr{U}_{\mathsf{opt,(w),(loc)}}}
\newcommand{\Jopt}{\mathscr{J}_{\mathsf{opt}}}
\newcommand{\Joptw}{\mathscr{J}_{\mathsf{opt,w}}}

\newcommand{\Joptloc}{\mathscr{J}_{\mathsf{opt,loc}}}
\newcommand{\Joptwloc}{\mathscr{J}_{\mathsf{opt,w,loc}}}

\newcommand{\Joptwblocb}{\mathscr{J}_{\mathsf{opt,(w),(loc)}}}

\newcommand{\Zopt}{\mathscr{Z}_{\mathsf{opt}}}

\newcommand{\JJ}{\mathcal{J}}

\newcommand{\Jhat}{\hat{J}}

\newcommand{\Qoptw}{\mathcal{Q}_{\mathsf{opt,w}}}

\newcommand{\Qoptlocb}{\mathcal{Q}_{\mathsf{opt,(loc)}}}
\newcommand{\Qoptwloc}{\mathcal{Q}_{\mathsf{opt,w,loc}}}

\newcommand{\Qoptwlocb}{\mathcal{Q}_{\mathsf{opt,w,(loc)}}}

\DeclareOldFontCommand{\bf}{\normalfont\bfseries}{\mathbf}
\DeclareOldFontCommand{\rm}{\normalfont\rmfamily}{\mathrm}
\newtheorem{Definition}{Definition}

\newtheorem{Assumption}{Assumption}
\newtheorem{Remark}{Remark}

\newtheorem{Lemma}{Lemma}
\newtheorem{Theorem}{Theorem}
\newtheorem{Corollary}{Corollary}

\begin{document}
\title{A trust region reduced basis Pascoletti-Serafini algorithm for multi-objective PDE-constrained parameter optimization}


\author{Stefan Banholzer\footnote{University of Konstanz, Universit\"atsstra\ss e 10, 78464 Konstanz, Germany; \{stefan.banholzer,luca.mechelli,stefan.volkwein\}@uni-konstanz.de}, Luca Mechelli$^*$ and Stefan Volkwein$^*$}

\date{}
\maketitle
\begin{abstract}
In the present paper non-convex multi-objective parameter optimization problems are considered which are governed by elliptic parametrized partial differential equations (PDEs). To solve these problems numerically the Pascoletti-Serafini scalarization is applied and the obtained scalar optimization problems are solved by an augmented Lagrangian method. However, due to the PDE constraints, the numerical solution is very expensive so that a model reduction is utilized by using the reduced basis (RB) method. The quality of the RB approximation is ensured by a trust-region strategy which does not require any offline procedure, where the RB functions are computed in a greedy algorithm. Moreover, convergence of the proposed method is guaranteed. Numerical examples illustrate the efficiency of the proposed solution technique.
\end{abstract}

\section{Introduction}

Multi-objective optimization plays an important role in many applications, e.g., in industry, medicine or engineering. One of the mentioned examples is the minimization of costs with simultaneous quality optimization in production or the minimization of CO$_2$ emission in energy generation and simultaneous cost minimization. These problems lead to multiobjective optimization problems (MOPs), where we want to achieve an optimal compromise with respect to all given objectives at the same time. Normally, the different objectives are contradictary such that there exists an infinite number of optimal compromises. The set of these compromises is called the \emph{Pareto set}. The goal is to approximate the Pareto set in an efficient way, which turns out to be more expensive than solving a single objective optimization problem. 

Since MOPs are of great importance, there exist several algorithms to solve them. Among the most popular methods are scalarization methods, which transform MOPs into scalar problems. For example, in the weighted sum method \cite{Ehr05,Mie99,Zad63}, convex combinations of the original objectives are optimized. However, in our case the multi-objective optimization problem
\begin{align}
\min\Jhat(u)=\big(\Jhat_1 (u),\ldots,\Jhat_k (u)\big)^T\quad\text{subject to (s.t.)}\quad u \in \Uad
\tag{\textbf{MOP}}
\label{MOP-Intro}
\end{align}
is non-convex with a bounded, non-empty, convex and closed set $\Uad$. To solve \eqref{MOP-Intro} a suitable scalarization method in that case is the Pascoletti-Serafini (PS) scalarization \cite{Eic08,Pas84}: For a chosen \emph{reference point} $z \in \mathbb{R}^k$ and a given \emph{target direction} $r \in \mathbb{R}^k$ with $r>0$ the \textup{Pascoletti-Serafini problem} is given by
\begin{equation}
\min t\quad\text{s.t.}\quad (t,u) \in \mathbb R\times\Uad\text{ and }\Jhat(u) - z\leq t \, r.
\label{PSP-Intro}
\tag{$\textbf{P}^\mathsf{PS}_{\bm{z,r}}$}
\end{equation}
In the present paper \eqref{PSP-Intro} is solved by an augmented Lagrangian approach. However, in our case the evaluation of the objective $\Jhat$ requires the solution of an elliptic partial differential equation (PDE) for the given parameter $u$. This implies further that for the computation of the gradients $\nabla\Jhat_i$, $i=1,\ldots,k$, adjoint PDEs have to be solved; cf. \cite{HPUU09}. Here, surrogate models offer a promising tool to reduce the computational effort significantly \cite{SvdVR08}. Examples are dimensional reduction techniques such as the Reduced Basis (RB) method \cite{HRS16,PR07}. In an offline phase, a low-dimensional surrogate model of the PDE is constructed by using, e.g., the greedy algorithm, cf. \cite{HRS16,IUV17,BGRV21}. In the online phase, only the RB model is used to solve the PDE, which saves a lot of computing time. 

We propose an extension of the method in \cite{Ban21} for solving multi-objective PDE-constained parameter optimization problems. This procedure is based on a combination of a trust-region reduced basis method \cite{Ban22,Kei21} and the PS method. In particular, we discuss different strategies to handle the increasing number of reduced basis functions, which is crucial in order to guarantee good performances of the algorithm.

The paper is organized as follows: In Section~\ref{Section:2} we introduce a general MOP and explain the PS method, in particular, a hierarchical version of the PS algorithm which turns out to be very efficient in the numerical realization. The concrete PDE-constrained MOP is investigated in Section~\ref{Section:PDE:model}. The trust-region RB method and its combination with the PSM is described in Section~\ref{Section:4}. Convergence is ensured and the algorithmic realization of the approach is explained. Numerical examples are discussed in detail in Section~\ref{Section:numerics}. Finally, we draw some conclusions.

\section{Multi-objective optimization}
\label{Section:2}

Let $(\Us,\langle \cdot\,, \cdot \rangle_\Us)$ be a real Hilbert space, $\Uad \subset \Us$ non-empty, convex and closed, $k \geq 2$ arbitrary and $\Jhat_1,\ldots,\Jhat_k \colon \Uad \subset \Us \to \mathbb{R}$ be given real-valued functions. In this manuscript, we assume also that $\Uad$ is bounded. This is an assumption we will require later for the convergence of our method. Note that one can derive similar results of this section if $\Uad$ is unbounded by introducing additional assumptions; cf. \cite{Ban21}. To shorten the notation, we write $\Jhat := (\Jhat_1,\ldots,\Jhat_k)^T \colon \Uad \to \mathbb{R}^k$. In the following, we deal with the multi-objective optimization problem
\begin{align}
\min\Jhat(u)\quad\text{s.t.}\quad u \in \Uad.
\tag{\textbf{MOP}} \label{Equation:MultiobjectiveOptimizationProblem}
\end{align}

\begin{Definition} \label{Definition:AdmissibleSetObjectiveSet}
	\begin{enumerate}
		\item[\em a)] The functions $\Jhat_1,\ldots,\Jhat_k$ are called \emph{cost} or \textup{objective functions}. Analogously, the vector-valued function $\Jhat \colon \Uad \to \R^k$ is named the \emph{(multi-objective) cost} or \emph{(multi-objective) objective function}.
		\item[\em b)] The Hilbert space $\Us$ is named the \emph{admissible space}, the set $\Uad$ is called the \textup{admissible set} and a vector $u \in \Uad$ is called \textup{admissible}. 
		\item[\em c)] The space $\R^k$ is named the \emph{objective space} and the image set $\Jhat(\Uad)$ is called the \emph{objective set}. A vector $y = \Jhat(u) \in \Jhat(\Uad)$ is called \emph{objective point}.
	\end{enumerate}
\end{Definition}

\begin{Definition}[Partial ordering on $\R^k$]
	\label{Definition:ProductOrderingOnRk}
	On $\mathbb{R}^k$ we define the partial ordering $\leq$ as
	\begin{align*}
	x \leq y & :\iff \left( \forall i \in \{1,\ldots,k\} \colon x_i \leq y_i \right)
	\end{align*}
	for all $x,y \in \mathbb{R}^k$. Moreover, we define
	\begin{align*}
	x < y & :\iff \left( \forall i \in \{1,\ldots,k\} \colon x_i < y_i \right).
	\end{align*}
	For convenience, we write 
	\begin{align*}
	x \lneqq y & :\iff \left( x \leq y \;\; \& \;\; x \neq y \right)
	\end{align*} 
	for all $x,y \in \mathbb{R}^k$ and define the two sets $\mathbb{R}^k_\leq := \{ y \in \mathbb{R}^k \mid y \leq 0 \}$, $\mathbb{R}^k_\lneqq := \{ y \in \mathbb{R}^k \mid y \lneqq 0 \}$. Analogously, the relations $\geq$,  $>$ and $\gneqq$ as well as the sets $\mathbb{R}^k_\geq$ and $\mathbb{R}^k_\gneqq$ are defined.
\end{Definition}

\begin{Definition}[Pareto optimality]
	\label{Definition:ParetoOptimality}
	\begin{enumerate}
		\item  [\em a)] An admissible vector $\bar{u} \in \Uad$ and its corresponding objective point $\bar{y} := \Jhat(\ubar) \in \Jhat(\Uad)$ are called \emph{(locally) weakly Pareto optimal} if there is no $\tilde{u} \in \Uad$ (in a neighborhood of $\bar{u}$) with $\Jhat(\util) < \Jhat(\ubar)$. The sets
		\begin{align*}
		\Uoptw & := \{ u \in \Uad \mid u \text{ is weakly Pareto optimal} \} \subset \Uad, \\
		\Uoptwloc & := \{ u \in \Uad \mid u \text{ is locally weakly Pareto optimal} \} \subset \Uad
		\end{align*}
		are said to be the \emph{weak Pareto set} and the \textup{locally weak Pareto set}, respectively. The sets
		\begin{align*}
		\Joptw:= \Jhat(\Uoptw) \subset \mathbb{R}^k,\quad\Joptwloc:= \Jhat(\Uoptwloc) \subset \mathbb{R}^k,
		\end{align*} 
		are the \emph{weak Pareto front} and the \emph{locally weak Pareto front}, respectively.
		\item  [\em b)] An admissible vector $\ubar \in \Uad$ and its corresponding objective point $\bar{y} := \Jhat(\ubar) \in \Jhat(\Uad)$ are called \emph{(locally) Pareto optimal} if there is no $\tilde{u} \in \Uad$ (in a neighborhood of $\bar{u}$) with $\Jhat(\util) \lneqq \Jhat(\ubar)$.
		The sets 
		\begin{align*}
		\Uopt & := \{ u \in \Uad \mid u \text{ is Pareto optimal} \} \subset \Uad, \\
		\Uoptloc & := \{ u \in \Uad \mid u \text{ is locally Pareto optimal} \} \subset \Uad
		\end{align*}
		are called the \emph{Pareto set} and the \emph{local Pareto set}, respectively. The sets
		\begin{align*}
		\Jopt:=\Jhat(\Uopt) \subset \mathbb{R}^k,\quad\Joptloc:=\Jhat(\Uoptloc) \subset \mathbb{R}^k
		\end{align*} 
		are called the \emph{Pareto front} and the \emph{local Pareto front}, respectively.
	\end{enumerate}
\end{Definition}

If we talk about the different notions of (local) (weak) Pareto optimality in one sentence, we use the notation $\Uoptwblocb$ to keep the sentence compact. Analogously, $\Uoptwbloc$, $\Uoptlocb$, $\Joptwblocb$ etc.~are to be understood. An example with the different concept of Pareto optimality can be found in \cite[Example~1.2.6]{Ban21}.

The next theorem goes back to \cite{Bor83}. It also appears in a similar form in \cite{Har78,Saw85}.

\begin{Theorem} \label{Theorem:ExistenceOfParetoOptimalPoints}
	Suppose that there is $y \in \Jhat(\Uad) + \R^k_\geq$ such that the set $(y - \R^k_\geq) \cap (\Jhat(\Uad) + \R^k_\geq)$ is compact. Then it holds $\Jopt \neq \emptyset$.
\end{Theorem}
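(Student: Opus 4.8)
The plan is to produce a single Pareto optimal point by minimizing a strictly monotone scalarization over the compact set supplied by the hypothesis. Write $A := \Jhat(\Uad) + \R^k_\geq$ and $S := (y - \R^k_\geq) \cap A$. First I would record that $S$ is non-empty: since $0 \in \R^k_\geq$ we have $y \in y - \R^k_\geq$, and $y \in A$ by assumption, so $y \in S$. By hypothesis $S$ is compact.

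Next I would minimize the continuous, order-monotone functional $\phi(w) := \sum_{i=1}^k w_i$ over $S$. By the Weierstrass theorem a minimizer $w^\ast \in S \subseteq A$ exists. The key claim is that $w^\ast$ is a minimal element of $A$, i.e.\ there is no $w' \in A$ with $w' \lneqq w^\ast$. Indeed, if such a $w'$ existed, then from $w' \leq w^\ast$ and $w^\ast \leq y$ (the latter because $w^\ast \in y - \R^k_\geq$) I would get $w' \leq y$, hence $w' \in (y-\R^k_\geq) \cap A = S$; but $w' \lneqq w^\ast$ forces $\phi(w') < \phi(w^\ast)$ by strict monotonicity of $\phi$, contradicting the minimality of $w^\ast$.

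It then remains to transfer this minimality from $A$ back to $\Jhat(\Uad)$. Since $w^\ast \in A$, there are $u^\ast \in \Uad$ and $d \in \R^k_\geq$ with $w^\ast = \Jhat(u^\ast) + d$. If $d \neq 0$, then $\Jhat(u^\ast) \lneqq w^\ast$ while $\Jhat(u^\ast) \in \Jhat(\Uad) \subseteq A$, contradicting the claim; hence $d = 0$ and $w^\ast = \Jhat(u^\ast) \in \Jhat(\Uad)$. Finally, $u^\ast$ is Pareto optimal: any $\tilde u \in \Uad$ with $\Jhat(\tilde u) \lneqq \Jhat(u^\ast) = w^\ast$ would produce a point $\Jhat(\tilde u) \in \Jhat(\Uad) \subseteq A$ dominating $w^\ast$, again contradicting the claim. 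Thus $u^\ast \in \Uopt$ and $w^\ast = \Jhat(u^\ast) \in \Jopt$, so $\Jopt \neq \emptyset$.

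The routine parts are the Weierstrass step and the Minkowski-sum decomposition of $w^\ast$; the point requiring care — and the conceptual heart of the argument — is the observation that every candidate dominator of $w^\ast$ automatically lands back inside the compact set $S$, which is precisely what lets a scalar minimization certify a vector-minimality (Pareto) property, together with the passage from minimal elements of the enlarged set $A$ to genuine Pareto optimal points of $\Jhat(\Uad)$.
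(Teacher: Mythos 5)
Your proof is correct, and it is self-contained where the paper's is not: the paper disposes of the statement in one line by citing \cite[Theorem~2.10]{Ehr05} (Borwein's existence theorem) together with the remark that adding $\R^k_\geq$ to $\Jhat(\Uad)$ does not change $\Jopt$.

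The route you take differs at the core existence step. The proof cited by the paper obtains a minimal element of the compact section via Zorn's lemma (every $\leq$-chain in the section has a lower bound, by the finite intersection property of compact sets), whereas you obtain one by Weierstrass, minimizing the continuous, strictly $\lneqq$-monotone functional $w \mapsto \sum_{i=1}^k w_i$ over $S = (y - \R^k_\geq) \cap A$. Both arguments then rest on the same ``section trick'': any dominator $w'$ of the minimizer satisfies $w' \leq w^\ast \leq y$ and hence lies back inside $S$. Your final decomposition step, where $w^\ast = \Jhat(u^\ast) + d$ forces $d = 0$, is exactly the ``adding $\R^k_\geq$ leaves the Pareto front unchanged'' argument that the paper invokes without detail, here made explicit. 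As for what each approach buys: yours is elementary and avoids the axiom of choice, but it exploits the fact that for the natural ordering on $\R^k$ a continuous strictly monotone scalarization exists; the Zorn-based argument generalizes to ordering cones or partial orders for which no such functional need be available. One detail you handled that is easy to overlook: compact sets may be empty, so your explicit verification that $y \in S$ (from $y \in \Jhat(\Uad) + \R^k_\geq$ and $0 \in \R^k_\geq$) is genuinely necessary rather than a formality.
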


\begin{proof}
	This is a slight generalization of \cite[Theorem 2.10]{Ehr05} using the argument that adding $\R^k_\geq$ to the set $\Jhat(\Uad)$ does not change the Pareto front $\Jopt$.\hfill\quad
\end{proof}

Given any $y = \Jhat(u) \in \Jhat(\Uad)$ with $y \not\in \Jopt$, it follows directly from the definition of Pareto optimality that there is $\bar{y} = \Jhat(\bar{u}) \in \Jhat(\Uad)$ with $\bar{y} \lneqq y$. However, even if the Pareto front $\Jopt$ is not empty (e.g., since the assumptions of Theorem~\ref{Theorem:ExistenceOfParetoOptimalPoints} are satisfied), it is not clear that there is $\bar{y} \in \Jopt$ with $\bar{y} \lneqq y$. If this property holds for all $y \in \Jhat(\Uad) \setminus \Jopt$, the set $\Jopt$ is said to be \emph{externally stable}; cf. \cite{Ehr05,Saw85}.  

\begin{Definition} \label{Definition:ExternallyStable}
	The set $\Jopt$ is said to be \textup{externally stable} if for every $y \in \Jhat(\Uad)$ there is $\bar{y} \in \Jopt$ with $\bar{y} \leq y$. This is equivalent to $\Jhat(\Uad) \subset \Jopt + \R^k_\geq$.
\end{Definition}

Especially for the investigation of suitable solution methods for solving \eqref{Equation:MultiobjectiveOptimizationProblem}, we are interested in guaranteeing that the Pareto front is externally stable. The next result pro\-vides a sufficient condition for this property.

\begin{Theorem} \label{Theorem:ParetoFrontIsExternallyStable}
	If for every $y \in \Jhat(\Uad) + \R^k_\geq$ the set $(y - \R^k_\geq) \cap (\Jhat(\Uad) + \R^k_\geq)$ is compact, then $\Jopt$ is externally stable.
\end{Theorem}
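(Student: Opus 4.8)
The plan is to reduce external stability to the existence of a single efficient point in a well-chosen compact set, and to use the fact (already exploited in the proof of Theorem~\ref{Theorem:ExistenceOfParetoOptimalPoints}) that enlarging $\Jhat(\Uad)$ by the cone $\Rkgeq$ leaves the Pareto front unchanged. Fix an arbitrary $y \in \Jhat(\Uad)$ and abbreviate $A := \Jhat(\Uad) + \Rkgeq$. Since $0 \in \Rkgeq$ we have $y \in A$, so the hypothesis applies to this $y$ and the set $C := (y - \Rkgeq) \cap A$ is compact; it is also nonempty, because $y \in C$. The target is to produce some $\bar y \in \Jopt$ with $\bar y \leq y$, since $y$ was arbitrary this yields $\Jhat(\Uad) \subset \Jopt + \Rkgeq$, which is precisely external stability by Definition~\ref{Definition:ExternallyStable}.

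To obtain a candidate I would minimize a strictly positive linear functional over the compact set $C$. Concretely, pick a minimizer $\bar y$ of $v \mapsto \sum_{i=1}^{k} v_i$ over $C$, which exists since $C$ is compact and the functional is continuous. Then $\bar y$ is efficient in $C$: any $\tilde y \in C$ with $\tilde y \lneqq \bar y$ would satisfy $\sum_{i=1}^{k} \tilde y_i < \sum_{i=1}^{k} \bar y_i$, contradicting minimality. Moreover $\bar y \in C \subset y - \Rkgeq$ gives $\bar y \leq y$ for free, so the required inequality holds automatically for whatever efficient point we extract.

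The main point, and the step I expect to carry the argument, is to upgrade efficiency in $C$ to efficiency in $A$. Suppose some $\tilde y \in A$ dominates $\bar y$, i.e.\ $\tilde y \lneqq \bar y$. Then $\tilde y \leq \bar y \leq y$, so $\tilde y \in y - \Rkgeq$, and since already $\tilde y \in A$ this forces $\tilde y \in C$, contradicting that $\bar y$ is efficient in $C$. Hence $\bar y$ is a minimal element of $A$. The crux is exactly this observation that any dominator of $\bar y$ is pushed below $y$ and therefore cannot escape $C$; this is what makes the restriction to the compact truncation harmless.

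Finally I would argue that minimal elements of $A$ lie in $\Jopt$. Writing $\bar y = v + c$ with $v \in \Jhat(\Uad)$ and $c \in \Rkgeq$, minimality of $\bar y$ forces $c = 0$, so $\bar y = v \in \Jhat(\Uad)$; and any $w \in \Jhat(\Uad)$ with $w \lneqq \bar y$ would lie in $A$, again contradicting minimality, so $\bar y \in \Jopt$. Combined with $\bar y \leq y$ from the second step, this closes the proof. The only genuinely nontrivial ingredients are the existence of the minimizer, which is the reason the hypothesis is phrased in terms of compactness of $(y - \Rkgeq) \cap (\Jhat(\Uad) + \Rkgeq)$, and the transfer of efficiency from $C$ to $A$ described above.
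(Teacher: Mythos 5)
Your proof is correct, and it takes a genuinely different route from the one the paper relies on. The paper gives no argument of its own: it points to \cite{Ehr05} (Theorem 2.21), where external stability is obtained by applying an abstract existence theorem for nondominated points --- the analogue of Theorem~\ref{Theorem:ExistenceOfParetoOptimalPoints}, whose proof rests on a Zorn-type chain argument --- to the compact section $(y-\Rkgeq)\cap(\Jhat(\Uad)+\Rkgeq)$, and then transfers nondominance from the section to the whole set. You keep the section-and-transfer skeleton (your observation that any dominator $\tilde y\lneqq\bar y$ satisfies $\tilde y\le y$ and hence cannot escape $C$ is exactly the classical transfer step, and your final paragraph correctly re-derives the fact, already used in the proof of Theorem~\ref{Theorem:ExistenceOfParetoOptimalPoints}, that minimal elements of $A=\Jhat(\Uad)+\Rkgeq$ are precisely the points of $\Jopt$), but you replace the abstract existence ingredient by an explicit scalarization: minimizing the strictly monotone functional $v\mapsto\sum_{i=1}^k v_i$ over the compact set $C$, a minimizer of which is automatically efficient in $C$. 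This buys a completely elementary, Zorn-free and essentially constructive proof, which is legitimate here precisely because the hypothesis provides genuine compactness of the sections (under the weaker cone-compactness notions used for existence results in \cite{Ehr05}, a continuous linear functional need not attain its infimum, so your trick would not directly apply). Two minor remarks: your argument only invokes the hypothesis for $y\in\Jhat(\Uad)$ rather than for all $y\in\Jhat(\Uad)+\Rkgeq$, so you have in fact proved a slightly sharper statement; and all the individual steps (nonemptiness of $C$, existence and efficiency of the minimizer, $\bar y\le y$, the transfer of efficiency from $C$ to $A$, and the decomposition $\bar y=v+c$ forcing $c=0$ so that $\bar y\in\Jhat(\Uad)$ and hence $\bar y\in\Jopt$) check out.
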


\begin{proof}
	For a proof of a similar version of this theorem, we refer to \cite[Theorem 2.21]{Ehr05}.
\end{proof}
Among the methods to solve multi-objective optimization problems, the ones based on scalarization techniques are frequently appearing in the literature. Let us mention here the weighted-sum method \cite{Ehr05,Zad63}, the Euclidian reference point method \cite{Wie80} and the PS method \cite{Eic08,Pas84}. Since in our case the set $\Jhat(\Uad)+\R^k_{\geq}$ is non-convex, we apply the PS method which is proven to be able to solve a non-convex \eqref{Equation:MultiobjectiveOptimizationProblem}.

\subsection{The PS method}
\label{Section:PSM}

For a chosen \emph{reference point} $z \in \mathbb{R}^k$ and a given \emph{target direction} $r \in \mathbb{R}^k_>$ the \textup{PS problem} is given by
\begin{equation}
\min t\quad\text{s.t.}\quad (t,u) \in \mathbb R\times\Uad\text{ and }\Jhat(u) - z\leq t \, r.
\label{Equation:PascolettiSerafiniScalarization}
\tag{$\textbf{P}^\mathsf{PS}_{z,r}$}
\end{equation}
Analogously, we can define the PS problem as a scalarization problem. For $z \in \mathbb{R}^k$ and $r \in \mathbb{R}^k_>$ we define the scalarization function
\begin{align*}
g_{z,r} \colon \mathbb{R}^k \to \mathbb{R}, \quad x \mapsto g_{z,r}(x):=\max_{1\le i\le k}\frac{1}{r_i} \left(x_i - z_i \right),
\end{align*}
and the \emph{PS scalarized function} 
\[
\Jhat^{g_{z,r}}(u) := g_{z,r}(\Jhat(u))=\max_{1\le i\le k} \, \frac{1}{r_i} ( \Jhat_i(u) - z_i )\quad\text{for }u \in \Uad.
\] 
Then the \emph{reformulated PS problem} is given by
\begin{align}
\min\Jhat^{g_{z,r}}(u)\quad\text{s.t.}\quad u \in \Uad. \label{Equation:PascolettiSerafiniScalarization:Reformulated}
\tag{$\textbf{RP}^\mathsf{PS}_{z,r}$}
\end{align} 

The following theorem proved in \cite[Theorem~1.7.3]{Ban21} ensures the equivalence between \eqref{Equation:PascolettiSerafiniScalarization} and \eqref{Equation:PascolettiSerafiniScalarization:Reformulated}.

\begin{Theorem} \label{Theorem:PSM:EquivalenceOfDifferentProblemFormulations}
	Let $z \in \mathbb{R}^k$ and $r \in \R^k_>$ be arbitrary. On the one hand, if $(\bar{u},\bar{t})$ is a global (local) solution of \eqref{Equation:PascolettiSerafiniScalarization}, then $\bar{u}$ is a global (local) solution of \eqref{Equation:PascolettiSerafiniScalarization:Reformulated} with minimal function value $\bar{t}$. On the other hand, if $\bar{u}$ is a global (local) solution of \eqref{Equation:PascolettiSerafiniScalarization:Reformulated}, then $(\bar{u},\bar{t})$ with $\bar{t} := \max_{1\le i\le k} ( \Jhat_i(\bar{u}) - z_i )/r_i$ is a global (local) solution of \eqref{Equation:PascolettiSerafiniScalarization}. 
\end{Theorem}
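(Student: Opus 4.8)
The plan is to reduce everything to a single elementary observation about the constraint. Since $r \in \R^k_>$, every component $r_i$ is strictly positive, so for a pair $(t,u) \in \R \times \Uad$ the vector inequality $\Jhat(u) - z \le t\,r$ is equivalent to $\frac{1}{r_i}(\Jhat_i(u) - z_i) \le t$ holding for each $i$, and hence to $\Jhat^{g_{z,r}}(u) = \max_{1\le i\le k}\frac{1}{r_i}(\Jhat_i(u) - z_i) \le t$. In other words, the feasible set of \eqref{Equation:PascolettiSerafiniScalarization} is exactly the epigraph $\{(t,u) \in \R \times \Uad : t \ge \Jhat^{g_{z,r}}(u)\}$ of $\Jhat^{g_{z,r}}$ over $\Uad$. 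With this identification the theorem becomes the standard fact that minimizing the height $t$ over an epigraph coincides with minimizing the function $\Jhat^{g_{z,r}}$ itself.

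For the global statement I would argue directly. First, if $(\ubar, \bar t)$ solves \eqref{Equation:PascolettiSerafiniScalarization}, feasibility gives $\Jhat^{g_{z,r}}(\ubar) \le \bar t$; since $(\ubar, \Jhat^{g_{z,r}}(\ubar))$ is also feasible and $t$ is being minimized, optimality forces $\bar t \le \Jhat^{g_{z,r}}(\ubar)$, whence $\bar t = \Jhat^{g_{z,r}}(\ubar)$. Then for any $u \in \Uad$ the pair $(u, \Jhat^{g_{z,r}}(u))$ is feasible, so $\bar t \le \Jhat^{g_{z,r}}(u)$, i.e. $\ubar$ minimizes $\Jhat^{g_{z,r}}$ with value $\bar t$. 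Conversely, if $\ubar$ solves \eqref{Equation:PascolettiSerafiniScalarization:Reformulated} and $\bar t := \Jhat^{g_{z,r}}(\ubar)$, then $(\ubar, \bar t)$ is feasible, and any feasible $(t,u)$ satisfies $t \ge \Jhat^{g_{z,r}}(u) \ge \Jhat^{g_{z,r}}(\ubar) = \bar t$, so $(\ubar, \bar t)$ is optimal.

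The only place that needs care is the local statement, and here the main point is to choose product neighbourhoods and exploit that the $t$-direction is free. For the forward direction, let $(\ubar, \bar t)$ be a local solution, optimal on a neighbourhood which I may shrink to a box $B_\delta(\ubar) \times (\bar t - \varepsilon, \bar t + \varepsilon)$. I first re-prove $\bar t = \Jhat^{g_{z,r}}(\ubar)$: were $\bar t > \Jhat^{g_{z,r}}(\ubar)$, any $t \in (\bar t - \varepsilon, \bar t)$ would yield a feasible point $(\ubar, t)$ inside the box with smaller objective, a contradiction. To show $\ubar$ is locally optimal for \eqref{Equation:PascolettiSerafiniScalarization:Reformulated}, fix $u \in B_\delta(\ubar) \cap \Uad$ and suppose $\Jhat^{g_{z,r}}(u) < \bar t$; then choosing any $t$ with $\max(\Jhat^{g_{z,r}}(u),\, \bar t - \varepsilon) < t < \bar t$ produces a feasible point $(u,t)$ in the box beating $\bar t$, again a contradiction, so $\Jhat^{g_{z,r}}(u) \ge \bar t = \Jhat^{g_{z,r}}(\ubar)$. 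The reverse direction is easier: if $\ubar$ is locally optimal for \eqref{Equation:PascolettiSerafiniScalarization:Reformulated} on $B_\delta(\ubar)$, then on the neighbourhood $B_\delta(\ubar) \times \R$ every feasible $(t,u)$ obeys $t \ge \Jhat^{g_{z,r}}(u) \ge \Jhat^{g_{z,r}}(\ubar) = \bar t$. The worry one might have, namely that controlling a product neighbourhood in $\R \times \Us$ requires continuity of $\Jhat^{g_{z,r}}$, does not materialize: in the critical case one lowers $t$ only toward $\bar t - \varepsilon$ rather than all the way down to $\Jhat^{g_{z,r}}(u)$, so no regularity of $\Jhat$ is used anywhere.
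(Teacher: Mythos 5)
Your proof is correct. Note that the paper does not spell out an argument for this theorem at all---it defers to \cite[Theorem~1.7.3]{Ban21}---so there is nothing inline to compare against; your epigraph identification (the feasible set of \eqref{Equation:PascolettiSerafiniScalarization} is exactly $\{(t,u)\in\R\times\Uad \mid t \geq \Jhat^{g_{z,r}}(u)\}$, valid precisely because $r\in\R^k_>$) is the standard route such a reference takes, and your treatment of the local case via product neighbourhoods, lowering $t$ only toward $\bar t-\varepsilon$ rather than to $\Jhat^{g_{z,r}}(u)$, correctly isolates and resolves the one step that genuinely needs care (no continuity of $\Jhat$ is used). One small wording slip: in proving $\bar t = \Jhat^{g_{z,r}}(\bar u)$ you claim \emph{any} $t\in(\bar t-\varepsilon,\bar t)$ yields a feasible point $(\bar u,t)$, but feasibility requires $t\geq \Jhat^{g_{z,r}}(\bar u)$, which can fail for $t$ near $\bar t-\varepsilon$; you should instead pick some $t\in\bigl(\max\{\Jhat^{g_{z,r}}(\bar u),\,\bar t-\varepsilon\},\,\bar t\bigr)$, an interval that is nonempty exactly under the hypothesis $\bar t>\Jhat^{g_{z,r}}(\bar u)$ being refuted---the same fix you already apply explicitly in the second claim, so this is cosmetic rather than a gap.
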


\begin{Assumption} \label{Assumption:NonConvexMOP}
	The cost functions $\Jhat_1,\ldots,\Jhat_k$ are weakly lower semi-continuous and bounded from below. 
\end{Assumption}

\begin{Theorem}
	\label{Theorem:PSM:ExistenceOfParetoOptimalSolution}
	Let \rm Assumption~\ref{Assumption:NonConvexMOP} be satisfied and $z \in \mathbb{R}^k$ as well as $r \in \R^k_>$ be arbitrary. Then \eqref{Equation:PascolettiSerafiniScalarization:Reformulated} has a global solution $\bar{u} \in \Uopt$.	
\end{Theorem}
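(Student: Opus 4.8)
The plan is to combine the direct method of the calculus of variations with a secondary minimization that upgrades a generic minimizer to a Pareto optimal one.

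First I would establish existence of a global minimizer of \eqref{Equation:PascolettiSerafiniScalarization:Reformulated}. Under Assumption~\ref{Assumption:NonConvexMOP} each $\Jhat_i$ is weakly lower semi-continuous, hence so is each map $u \mapsto (\Jhat_i(u)-z_i)/r_i$ since $r_i>0$, and the finite pointwise maximum $\Jhat^{g_{z,r}}$ is again weakly lower semi-continuous, because its sublevel sets $\{u \mid \Jhat^{g_{z,r}}(u)\le c\} = \bigcap_{i} \{u \mid (\Jhat_i(u)-z_i)/r_i \le c\}$ are finite intersections of the weakly closed sublevel sets of the individual functions. Moreover $\Jhat^{g_{z,r}}$ is bounded from below because each $\Jhat_i$ is. Since $\Uad$ is bounded, convex and closed in the Hilbert space $\Us$, it is weakly sequentially compact (closed convex sets are weakly closed by Mazur's lemma, and bounded sequences in a reflexive space admit weakly convergent subsequences). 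Taking a minimizing sequence, extracting a weakly convergent subsequence with weak limit $\ubar \in \Uad$ and invoking weak lower semi-continuity yields $\Jhat^{g_{z,r}}(\ubar) \le \inf_{u\in\Uad} \Jhat^{g_{z,r}}(u) =: \bar{t}$, so $\ubar$ is a global minimizer.

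The subtle point is that such a minimizer is in general only \emph{weakly} Pareto optimal, whereas the claim asks for $\ubar \in \Uopt$. Indeed, a point $\util$ with $\Jhat(\util) < \Jhat(\ubar)$ componentwise would force $\Jhat^{g_{z,r}}(\util) < \bar{t}$ and contradict minimality, but a merely $\lneqq$-dominating point need not lower the maximizing component and hence need not decrease $\Jhat^{g_{z,r}}$. To remedy this I would pass to the slice $\Uad' := \{u \in \Uad \mid \Jhat(u) \le \Jhat(\ubar)\}$ of admissible points that weakly dominate $\ubar$. This set contains $\ubar$, and it is weakly sequentially compact, being the intersection of the weakly compact $\Uad$ with the weakly closed sublevel sets $\{u \mid \Jhat_i(u) \le \Jhat_i(\ubar)\}$. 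On $\Uad'$ I would minimize the auxiliary functional $F(u) := \sum_{i=1}^{k}\Jhat_i(u)$, which is weakly lower semi-continuous (a finite sum of such functions, using $\liminf$-superadditivity together with boundedness from below) and bounded below, so it attains its minimum at some $u^\star \in \Uad'$.

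It then remains to check two things. Since $u^\star \in \Uad'$ gives $\Jhat(u^\star) \le \Jhat(\ubar)$ and hence $\Jhat^{g_{z,r}}(u^\star) \le \bar{t}$, the point $u^\star$ is itself a global solution of \eqref{Equation:PascolettiSerafiniScalarization:Reformulated}. Finally $u^\star$ is Pareto optimal: if some $v \in \Uad$ satisfied $\Jhat(v) \lneqq \Jhat(u^\star)$, then $\Jhat(v) \le \Jhat(u^\star) \le \Jhat(\ubar)$ would place $v$ in $\Uad'$ while $F(v) < F(u^\star)$, the sum dropping strictly under a componentwise $\le$ with at least one strict inequality, contradicting the minimality of $u^\star$ for $F$ on $\Uad'$. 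Relabelling $u^\star$ as $\ubar$ gives the assertion. The main obstacle is precisely this last upgrade step, namely guaranteeing genuine Pareto optimality rather than mere weak Pareto optimality; minimizing the additively separable $F$ over the weakly-dominating slice is what forces the strict improvement to be detected, while the preliminary existence argument is routine once weak sequential compactness of $\Uad$ is available.
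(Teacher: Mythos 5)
Your proof is correct, and every step checks out: the max of finitely many weakly lower semi-continuous functions is weakly lower semi-continuous (your sublevel-set argument is valid, since sequential weak lower semi-continuity is equivalent to sequential weak closedness of all sublevel sets), the bounded closed convex $\Uad$ is weakly sequentially compact in the Hilbert space $\Us$, the slice $\Uad' = \{u \in \Uad \mid \Jhat(u) \leq \Jhat(\ubar)\}$ inherits weak sequential compactness, the monotonicity of $g_{z,r}$ with respect to the componentwise order (valid because $r \in \R^k_>$) keeps the upgraded point $u^\star$ a global solution of \eqref{Equation:PascolettiSerafiniScalarization:Reformulated}, and the strict decrease of the sum functional under $\lneqq$-domination yields genuine Pareto optimality. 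You also correctly identify the crux that a bare minimizer of the scalarization is in general only weakly Pareto optimal.

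The route differs from the paper's, which gives no in-text argument and instead cites \cite[Corollary~1.7.12]{Ban21}; the mechanism behind that corollary is to combine the direct-method existence of a global solution of \eqref{Equation:PascolettiSerafiniScalarization:Reformulated} (which is weakly Pareto optimal) with \emph{external stability} of the Pareto front in the sense of Definition~\ref{Definition:ExternallyStable}, as guaranteed under the compactness hypotheses of Theorem~\ref{Theorem:ParetoFrontIsExternallyStable}: one picks $\bar y \in \Jopt$ with $\bar y \leq \Jhat(\tilde u)$ for a minimizer $\tilde u$ and uses monotonicity of $g_{z,r}$ to conclude that the dominating Pareto optimal point still solves the scalarized problem. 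Standard proofs of external stability (e.g.\ \cite[Theorem~2.21]{Ehr05}) run through Zorn's lemma on the sections $(y - \R^k_\geq) \cap (\Jhat(\Uad) + \R^k_\geq)$. Your second minimization of $F(u) = \sum_{i=1}^k \Jhat_i(u)$ over the weakly compact dominating slice re-derives exactly the instance of external stability that is needed, constructively and without Zorn's lemma, at the cost of a second application of the direct method; the citation route buys generality, since external stability holds under section-compactness hypotheses weaker than boundedness of $\Uad$, which is relevant because the paper remarks that the unbounded case can be treated under additional assumptions. Both arguments are sound; yours is a legitimate, more elementary and self-contained alternative tailored to the bounded Hilbert-space setting of Assumption~\ref{Assumption:NonConvexMOP}.
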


\begin{proof}
	A proof of this statement can be found in \cite[Corollary 1.7.12]{Ban21}. \hfill\quad 
\end{proof}

The previous result also shows that the existing global solution of \eqref{Equation:PascolettiSerafiniScalarization:Reformulated} belongs to the Pareto set. To guarantee a good reconstruction of the Pareto set by the PS method, one needs that, given a (weakly) Pareto optimal point, it is possible to choose the parameters $z$ and $r$ such that this point solves \eqref{Equation:PascolettiSerafiniScalarization:Reformulated}. This is stated in \cite[Theorem~1.7.13]{Ban21}, which we report here for clearness. 

\begin{Theorem} \label{Theorem:PSM:AllParetoOptimalsAreSolutions}
	Let $\bar{u} \in \Uoptw$ be arbitrary. Then for every $r \in \R^k_>$ and every $\bar{t} \in \mathbb{R}$ we have that $\bar{u}$ is a global solution of \eqref{Equation:PascolettiSerafiniScalarization:Reformulated} for the reference point $z := \Jhat(\bar{u}) - \bar{t} r$. If even $\bar{u} \in \Uopt$, any other global solution $\tilde{u}$ of \eqref{Equation:PascolettiSerafiniScalarization:Reformulated} satisfies $\Jhat(\tilde{u}) = \Jhat(\bar{u})$.
\end{Theorem}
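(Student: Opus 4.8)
The plan is to unwind the two definitions involved (the PS scalarization $\Jhat^{g_{z,r}}$ and the notions of weak/strong Pareto optimality), exploiting the specific choice $z := \Jhat(\bar{u}) - \bar{t}\,r$, which is engineered precisely so that the scalarized objective takes a known value at $\bar{u}$. First I would evaluate $\Jhat^{g_{z,r}}$ at $\bar{u}$: inserting $z_i = \Jhat_i(\bar{u}) - \bar{t}\,r_i$ into the definition gives, for each index $i$, the identity $\frac{1}{r_i}\big(\Jhat_i(\bar{u}) - z_i\big) = \bar{t}$, so that $\Jhat^{g_{z,r}}(\bar{u}) = \max_{1\le i\le k}\bar{t} = \bar{t}$. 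This is the anchor value against which all competitors will be compared.

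For the global-optimality claim I would argue by contradiction. Suppose some $\tilde{u} \in \Uad$ satisfies $\Jhat^{g_{z,r}}(\tilde{u}) < \bar{t}$. Because $g_{z,r}$ is a maximum over the $k$ coordinates and each $r_i > 0$, this single scalar strict inequality is equivalent to the $k$ componentwise strict inequalities $\Jhat_i(\tilde{u}) < z_i + \bar{t}\,r_i = \Jhat_i(\bar{u})$ for all $i$, i.e.\ $\Jhat(\tilde{u}) < \Jhat(\bar{u})$ in the sense of Definition~\ref{Definition:ProductOrderingOnRk}. This is exactly a strict improvement forbidden by weak Pareto optimality (Definition~\ref{Definition:ParetoOptimality}), contradicting $\bar{u} \in \Uoptw$. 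Hence no such $\tilde{u}$ exists, so $\bar{u}$ is a global minimizer of \eqref{Equation:PascolettiSerafiniScalarization:Reformulated} with minimal value $\bar{t}$.

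For the second assertion, assume $\bar{u} \in \Uopt$ and let $\tilde{u}$ be any global solution. Since $\bar{u}$ is itself a global solution of value $\bar{t}$, optimality forces $\Jhat^{g_{z,r}}(\tilde{u}) = \bar{t}$. Reading the maximum off coordinatewise, this yields $\frac{1}{r_i}\big(\Jhat_i(\tilde{u}) - z_i\big) \le \bar{t}$ for every $i$, hence $\Jhat_i(\tilde{u}) \le \Jhat_i(\bar{u})$, i.e.\ $\Jhat(\tilde{u}) \le \Jhat(\bar{u})$. Were this inequality strict in even one coordinate, we would have $\Jhat(\tilde{u}) \lneqq \Jhat(\bar{u})$, contradicting the Pareto optimality of $\bar{u}$; therefore $\Jhat(\tilde{u}) = \Jhat(\bar{u})$, as claimed.

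I do not expect a genuine obstacle here: the whole statement collapses to the observation that, for $r \in \R^k_>$, the scalar condition $g_{z,r}(\Jhat(u)) < \bar{t}$ (respectively $\le \bar{t}$) translates verbatim into componentwise strict (respectively non-strict) inequalities against $\Jhat(\bar{u})$. The only point demanding care is keeping the strict/non-strict bookkeeping straight, so that \emph{weak} Pareto optimality is what the first part contradicts, while the equality in the second part uses \emph{full} Pareto optimality together with the antisymmetry of $\le$ on $\R^k$.
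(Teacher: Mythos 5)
Your proof is correct, and there is nothing to compare against in the paper itself: the paper states this result without proof, deferring to \cite[Theorem~1.7.13]{Ban21}, and your direct unwinding — evaluating $\Jhat^{g_{z,r}}(\bar{u}) = \bar{t}$ for the engineered reference point, translating $g_{z,r}(\Jhat(u)) < \bar{t}$ (resp.\ $\leq \bar{t}$) into componentwise strict (resp.\ non-strict) inequalities, and playing these off against weak resp.\ full Pareto optimality — is exactly the standard argument used in that reference. The strict/non-strict bookkeeping you flag is indeed the only delicate point, and you handle it correctly.
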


\begin{Remark}
	We refer the reader to \emph{\cite[Lemma~1.7.15]{Ban21}} for the derivation of first-order necessary optimality condition for a global solution of \eqref{Equation:PascolettiSerafiniScalarization}.
\end{Remark}

Thus, the PS method can compute in principle every (locally) (weak) Pareto optimal point so that many algorithms based on PS method have been proposed. Here we only mention the ones which are related to (but differ from) our proposed technique. Our main idea is to keep the parameter $r$ fixed, while varying the reference point $z$. This was also proposed in \cite{Eic08}, but the method turns out to be not applicable numerically for $k>2$. In \cite{Gri09}, the authors provide assumptions on the Pareto front to ensure that the so-called trade-off limits (i.e., points on the Pareto front which cannot be improved in at least one component), are given by the solution to subproblems. Their idea was then to find these trade-off points first and then compute the rest of the Pareto front. A similar idea but with the use of Centroidal Voronoi Tessellation was presented by \cite{Mot12}. Finally, \cite{Kha15} shows and fixes some problematic behavior associated to the algorithm in \cite{Gri09}. We follow the idea of the mentioned contributions of hierarchically solving subproblems of \eqref{Equation:MultiobjectiveOptimizationProblem}, but with the focus of finding a set of reference points, by looking at subproblems, for which we can obtain Pareto optimal points. We are then not interested in finding 'boundary' points (i.e., the trade-off limits) of the Pareto front and then filling its 'interior' as in \cite{Gri09,Kha15,Mot12}, but rather to partly generalize this approach. In what follows, we characterize which reference points are necessary and/or sufficient for computing the entire (local) (weak) Pareto front. First we recall the following well-defined mappings; cf. \cite[Definition~1.7.16]{Ban21}.

\begin{Definition} \label{Definition:PSM:SolutionMappings}
	We define the set-valued mappings
	\begin{align*}
	\Qoptw & \colon \R^k \rightrightarrows \Uoptw, \, && z \mapsto \{ u \in \Uad \mid u \text{ is a global solution of } \eqref{Equation:PascolettiSerafiniScalarization:Reformulated} \}, \\
	\Qoptwloc & \colon \R^k \rightrightarrows \Uoptwloc, \, && z \mapsto \{ u \in \Uad \mid u \text{ is a local solution of } \eqref{Equation:PascolettiSerafiniScalarization:Reformulated} \}, \\
	\Qoptlocb & \colon \R^k \rightrightarrows \Uoptlocb, \, && z \mapsto \Qoptwlocb(z) \cap \Uoptlocb.
	\end{align*}
\end{Definition}

From Theorem~\ref{Theorem:PSM:EquivalenceOfDifferentProblemFormulations}, it follows that $\mathcal{Q}_{\mathsf{opt,(w),(loc)}}(\R^k) = \mathscr{U}_{\mathsf{opt,(w),(loc)}}$.
Furthermore, if Assumption~\ref{Assumption:NonConvexMOP} is satisfied, we infer from Theorem~\ref{Theorem:PSM:ExistenceOfParetoOptimalSolution} that $\mathcal{Q}_{\mathsf{opt,(w),(loc)}}(z) \neq \emptyset$ for all $z \in \R^k$.
We also introduce the notion of a (locally) (weakly) Pareto sufficient set for the PSM.

\begin{Definition}
	\label{Definition:PSM:ParetoSufficient}
	A set $Z \subset \mathbb{R}^k$ is called \emph{(locally) (weakly) Pareto sufficient} if we have $\mathcal{Q}_{\mathsf{opt,(w),(loc)}}(Z) = \mathscr{U}_{\mathsf{opt,(w),(loc)}}$.
\end{Definition}

Hence, a (locally) (weakly) Pareto sufficient set contains the reference points which allow us to compute the entire (local) (weak) Pareto front. Clearly, the set $\R^k$ is (locally) (weakly) Pareto sufficient, but this fact is not computationally useful. The next lemma gives a first condition towards this computational efficiency.
\begin{Lemma} \label{Lemma:SufficientConditionParetoSufficent}
	Let $Z \subset \mathbb{R}^k$ be arbitrary. $Z$ is (locally) (weakly) Pareto sufficient, if
	\begin{align}
	\forall \ubar \in \mathscr{U}_{\mathsf{opt,(w),(loc)}} \colon \exists t \in \mathbb{R} \colon \Jhat(\ubar) - tr \in Z. \label{Equation:PSM:SufficientConditionParetoSufficient}
	\end{align}
\end{Lemma}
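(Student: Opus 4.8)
The plan is to verify the set equality $\Qoptwblocb(Z) = \Uoptwblocb$ demanded by Definition~\ref{Definition:PSM:ParetoSufficient} by splitting it into two inclusions. The inclusion $\Qoptwblocb(Z) \subseteq \Uoptwblocb$ requires no hypothesis at all: it is already encoded in Definition~\ref{Definition:PSM:SolutionMappings}, since the weak solution maps satisfy $\Qoptw \colon \R^k \rightrightarrows \Uoptw$ and $\Qoptwloc \colon \R^k \rightrightarrows \Uoptwloc$ by construction, while in the non-weak cases $\Qoptlocb(z) = \Qoptwlocb(z) \cap \Uoptlocb \subseteq \Uoptlocb$. Hence the entire argument is concentrated in the reverse inclusion $\Uoptwblocb \subseteq \Qoptwblocb(Z)$, and this is exactly where the hypothesis \eqref{Equation:PSM:SufficientConditionParetoSufficient} is used.

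First I would fix an arbitrary $\ubar \in \Uoptwblocb$ and invoke \eqref{Equation:PSM:SufficientConditionParetoSufficient} to select $t \in \R$ with $z := \Jhat(\ubar) - tr \in Z$. It then suffices to show $\ubar \in \Qoptwblocb(z)$, since this immediately yields $\ubar \in \Qoptwblocb(Z)$. In the weak global case $\ubar \in \Uoptw$, this is precisely Theorem~\ref{Theorem:PSM:AllParetoOptimalsAreSolutions} applied with $\bar{t} := t$: the theorem states that $\ubar$ is a global solution of \eqref{Equation:PascolettiSerafiniScalarization:Reformulated} for exactly the reference point $z = \Jhat(\ubar) - tr$, i.e. $\ubar \in \Qoptw(z)$.

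For the remaining three cases I would lean on the shift identity behind Theorem~\ref{Theorem:PSM:AllParetoOptimalsAreSolutions}. With $z = \Jhat(\ubar) - tr$ one computes $\Jhat^{g_{z,r}}(u) = t + \max_{1 \le i \le k} \frac{1}{r_i}\big(\Jhat_i(u) - \Jhat_i(\ubar)\big)$, so that $\Jhat^{g_{z,r}}(\ubar) = t$ and, for any $\util \in \Uad$, the strict inequality $\Jhat(\util) < \Jhat(\ubar)$ holds if and only if $\Jhat^{g_{z,r}}(\util) < \Jhat^{g_{z,r}}(\ubar)$ (because $r \in \R^k_>$ and a strict componentwise decrease strictly lowers the pointwise maximum). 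Restricting the quantifier to a neighborhood of $\ubar$, this equivalence shows that $\ubar$ being locally weakly Pareto optimal is the same as $\ubar$ being a local solution of \eqref{Equation:PascolettiSerafiniScalarization:Reformulated}, giving the local weak case $\ubar \in \Qoptwloc(z)$. For the two Pareto (non-weak) cases I would use $\Uopt \subseteq \Uoptw$ and $\Uoptloc \subseteq \Uoptwloc$ so that the weak conclusions above already yield $\ubar \in \Qoptwlocb(z)$; since moreover $\ubar \in \Uoptlocb$ by assumption, the definition $\Qoptlocb(z) = \Qoptwlocb(z) \cap \Uoptlocb$ from Definition~\ref{Definition:PSM:SolutionMappings} delivers $\ubar \in \Qoptlocb(z)$.

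I expect the main obstacle to be careful bookkeeping rather than any deep difficulty. One must ensure that all four instances hidden in the compact "$\mathsf{(w),(loc)}$" notation are genuinely covered, and in particular that the local statement — which is not literally part of Theorem~\ref{Theorem:PSM:AllParetoOptimalsAreSolutions} — is supplied by the shift identity localized to a neighborhood. The only real point requiring attention is the equivalence $\Jhat(\util) < \Jhat(\ubar) \iff \Jhat^{g_{z,r}}(\util) < \Jhat^{g_{z,r}}(\ubar)$, which rests squarely on $r \in \R^k_>$ and on the fact that strict dominance in every component forces a strict drop of the maximum defining $g_{z,r}$.
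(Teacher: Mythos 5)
Your proof is correct and takes essentially the same route as the paper, whose short proof likewise fixes $\ubar \in \mathscr{U}_{\mathsf{opt,(w),(loc)}}$, uses \eqref{Equation:PSM:SufficientConditionParetoSufficient} to produce $z := \Jhat(\ubar) - tr \in Z$, and concludes $\ubar \in \mathcal{Q}_{\mathsf{opt,(w),(loc)}}(z)$ from Theorem~\ref{Theorem:PSM:AllParetoOptimalsAreSolutions}, with the inclusion $\mathcal{Q}_{\mathsf{opt,(w),(loc)}}(Z) \subseteq \mathscr{U}_{\mathsf{opt,(w),(loc)}}$ left implicit in Definition~\ref{Definition:PSM:SolutionMappings}. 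Your explicit verification of the local cases via the shift identity $\Jhat^{g_{z,r}}(u) = t + \max_{1\le i\le k} \frac{1}{r_i}(\Jhat_i(u) - \Jhat_i(\ubar))$, and of the non-weak cases via $\Qoptlocb(z) = \Qoptwlocb(z) \cap \Uoptlocb$, correctly spells out what the paper's compact $(\mathsf{w}),(\mathsf{loc})$ notation delegates to the localized version of that theorem, so it is a faithful expansion rather than a different argument.
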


\begin{proof}
	Let $Z \subset \mathbb{R}^k$ be such that \eqref{Equation:PSM:SufficientConditionParetoSufficient} holds. Let $\bar{u} \in \mathscr{U}_{\mathsf{opt,(w),(loc)}}$ be arbitrary. We need to show that there is a $z \in Z$ with $\bar{u} \in \mathcal{Q}_{\mathsf{opt,(w),(loc)}}(z)$. Indeed, by \eqref{Equation:PSM:SufficientConditionParetoSufficient} there is $t \in \mathbb{R}$ with $z := \Jhat(\bar{u}) - t r \in Z$ and by Theorem~\ref{Theorem:PSM:AllParetoOptimalsAreSolutions} we already have $\bar{u} \in \mathcal{Q}_{\mathsf{opt,(w),(loc)}}(z)$.\hfill\quad
\end{proof}

To proceed we introduce the concepts of ideal point and shifted ideal point, which will be used to define an optimal Pareto sufficient set\footnote{The word 'optimal' here means that removing any point from the set will cause the loss of the Pareto sufficient property.}.

\begin{Definition}
	\label{Definition:yidealpoint}
	\begin{enumerate}
		\item [\em a)] We define the \emph{ideal objective point} $\yid \in \mathbb{R}^k \cup \{-\infty\}$ by $\yid_i := \inf_{u \in \Uad} \Jhat_i(u)$ for all $i \in \{1,\ldots,k \}$.
		\item [\em b)] For an arbitrary vector $\tilde{d} \in \R^k_>$ define the \emph{shifted ideal point} $\yidshift := \yid - \tilde{d}$. Let $D_i \subset \mathbb{R}^k$ be given by $D_i := \{ y \in \mathbb{R}^k \mid y \geq \yidshift, \; y_i = \yidshift_i \} $ for all $i \in \{1,\ldots,k \}$. Then the set $D \subset \mathbb{R}^k$ is defined by $D := \bigcup_{i=1}^k D_i$.
		\item [\em c)] We define $\mathscr{Z}^D_{\mathsf{opt,(w),(loc)}} := \{ z \in D \mid \exists \ubar \in \mathscr{U}_{\mathsf{opt,(w),(loc)}} \colon \exists t \in \mathbb{R} \colon z = \Jhat(\ubar) - t r \}$.
		\item [\em d)] For any $y\in\mathbb{R}^k$ we set $t^D(y):= \min_{i\in\{1,\ldots,k\}} (y_i-\yidshift_i)/r_i\in\R$.
	\end{enumerate}
\end{Definition}

\begin{Remark}
	\label{RemarkZD}
	It is proved in \emph{\cite[Lemma~1.7.24]{Ban21}} that 
	\begin{align*}
	\mathscr{Z}^D_{\mathsf{opt,(w),(loc)}} = \big\{ \Jhat(\ubar) - \tD (\Jhat(\ubar)) \, r \, \big| \, \ubar \in \mathscr{U}_{\mathsf{opt,(w),(loc)}} \big\}.
	\end{align*}
	Furthermore, the set $\mathscr{Z}^D_{\mathsf{opt,(w),(loc)}}$ is (locally) (weakly) Pareto sufficient and there is a Lipschitz continuous bijection between $\Zopt^D$ and the Pareto front $\Jopt$. Unfortunately there is no bijection between $\mathscr{Z}^D_{\mathsf{opt,(w),(loc)}}$ and $\mathscr{J}_{\mathsf{opt,(w),(loc)}}$, but the set $\mathscr{Z}^D_{\mathsf{opt,(w),(loc)}}$ is still (locally) (weakly) Pareto sufficient. Therefore, it is anyway possible to use it for the computation of the Pareto front.
\end{Remark}

\subsection{Hierarchical PS method}

Due to Definition~\ref{Definition:yidealpoint} and Remark~\ref{RemarkZD} the set $\mathscr{Z}^D_{\mathsf{opt,(w),(loc)}}$ can only by computed once the set $\mathscr{U}_{\mathsf{opt,(w),(loc)}}$ is available. Clearly, this characterization of $\mathscr{Z}^D_{\mathsf{opt,(w),(loc)}}$ is not useful for a numerical algorithm. Fortunately, in \cite{Ban21,Low84} it is shown that the Pareto set has a hierarchical structure. This means that the (weak) Pareto front and the (weak) Pareto sets of \eqref{Equation:MultiobjectiveOptimizationProblem} are contained in the set of all (weak) Pareto fronts and (weak) Pareto sets of all of its subproblems. This particular structure of the Pareto set can be exploited to set up a hierarchical algorithm for obtaining a superset of $\mathscr{Z}^D_{\mathsf{opt,(w),(loc)}}$ without computing entirely the (local) (weak) Pareto set first.

\begin{Definition} \label{Definition:SubproblemOfMOP}
	For the index set $I \subset \{1,\ldots,k\}$ we denote by $\Jhat^I$ the multi-objective cost function $(\Jhat_i)_{i \in I} \colon \Uad \to \mathbb{R}^I$, and call the problem
	\begin{align}
	\min \Jhat^I(u)\quad\text{s.t.}\quad u \in \Uad
	\tag{$\textbf{MOP}_I$} \label{Equation:MultiobjectiveOptimizationProblem_Subproblem}
	\end{align}
	a \emph{subproblem of \eqref{Equation:MultiobjectiveOptimizationProblem}}. For $I,K \subset \{1,\ldots,k\}$ with $K \subset I$,
	\begin{enumerate}
		\item [\em a)] and for every $y \in \mathbb{R}^I$ we denote by $y^K := (y_i)_{i \in K} \in \mathbb{R}^K$ the canonical projection to $\mathbb{R}^K$.
		\item [\em b)] the set $\Uoptwblocb^I := \{ u \in \Uad \mid u \text{ is (loc.) (weak.) Pareto optimal for } \eqref{Equation:MultiobjectiveOptimizationProblem_Subproblem} \}$	denotes the \textup{(local) (weak) Pareto set} and the set $\Joptwblocb^I := \Jhat^I(\Uoptwblocb^I) \subset \mathbb{R}^I$ denotes the \emph{(local) (weak) Pareto front of the subproblem \eqref{Equation:MultiobjectiveOptimizationProblem_Subproblem}}.
		\item [\em c)] the \emph{(local) (weak) nadir objective point for the subproblem \eqref{Equation:MultiobjectiveOptimizationProblem_Subproblem}} is defined by
		\begin{align*}
		y_i^{\mathsf{nad},I,\mathsf{(w),(loc)}} := \sup \{ y_i \mid y \in \mathscr{J}^I_{\mathsf{opt},(w),(loc)} \}\quad\text{for all } i \in I.
		\end{align*}
	\end{enumerate}
\end{Definition}

\begin{Definition} \label{Definition:PascolettiSerafiniScalarizationSubproblem}
	Let $I \subset \{1,\ldots,k\}$ be arbitrary. For a given reference point $z \in \mathbb{R}^{|I|}$ and target direction $r \in \mathbb{R}^{|I|}_>$, we define the \textup{PS problem for \eqref{Equation:MultiobjectiveOptimizationProblem_Subproblem}} by
	\begin{equation}
	\begin{aligned}
	\min t\quad\text{s.t.}\quad(t,u)\in\mathbb R\times\Uad\text{ and }\Jhat^I(u) - z \le tr^I.
	\end{aligned}
	\label{Equation:PascolettiSerafiniScalarizationSubproblem}
	\tag{$\textbf{P}^\mathsf{PS}_{I,z,r}$}
	\end{equation}
	Again, it is possible to show that \eqref{Equation:PascolettiSerafiniScalarizationSubproblem} is equivalent (in the sense of {\rm Theorem~\ref{Theorem:PSM:EquivalenceOfDifferentProblemFormulations}}) to the problem 
	\begin{equation}
	\min \bigg(\max_{i \in I} \, \frac{1}{r_i} \left( \Jhat_i(u) - z_i \right)\bigg)\quad\text{s.t.}\quad u \in \Uad. \label{Equation:PascolettiSerafiniScalarizationSubproblem:Reformulated}
	\tag{$\textbf{RP}^\mathsf{PS}_{I,z,r}$}
	\end{equation} 
\end{Definition}

Let us mention that the statements proved in Section~\ref{Section:PSM} can be adapted for the PS mehod for the subproblems. Similarly we can also define the sufficient Pareto sets.

\begin{Definition} \label{Definition:ERPM:yidshiftAndD:Subproblem}
	Let $I \subset \{1,\ldots,k\}$ be arbitrary. Given the vector $\tilde{d} \in \R^k_>$ and the shifted ideal point $\yidshift \in \R^k$, which were both introduced in Definition~\emph{\ref{Definition:yidealpoint}}, let $D_i^I \subset \mathbb{R}^I$ be given by
	\[
	D_i^I := \big\{ y \in \mathbb{R}^I\,\big|\,y \geq (\yidshift)^I, \; y_i = \yidshift_i \big\}\quad\text{for }i \in I.
	\] 
	Then the set $D^I \subset \mathbb{R}^I$ is defined by $D^I := \bigcup_{i \in I} D_i$. Moreover, for all $K \subset \{1,\ldots,k\}$ we define the sets 
	\begin{align*}
	\mathscr{Z}^{D^I,K}_{\mathsf{opt,(w),(loc)}} & := \big\{ z \in D^I \,\big|\, \exists \ubar \in \mathscr{U}^K_{\mathsf{opt,(w),(loc)}} \colon \exists t \in \R \colon z = \Jhat^I(\ubar) - t r^I \big\}.
	\end{align*}
	To ease the notation, we write $\mathscr{Z}^{D^I}_{\mathsf{opt,(w),(loc)}} := \mathscr{Z}^{D^I,I}_{\mathsf{opt,(w),(loc)}}$. If $I = \{1,\ldots,k\}$ we set 
	$\mathscr{Z}^{D,K}_{\mathsf{opt,(w),(loc)}} := \mathscr{Z}^{D^I,K}_{\mathsf{opt,(w),(loc)}}$ and $\mathscr{Z}^{D}_{\mathsf{opt,(w),(loc)}} := \mathscr{Z}^{D^I,I}_{\mathsf{opt,(w),(loc)}}$. Finally, for any $y \in \R^I$ we set $\tDI(y) := \min_{i \in I} \frac{y_i - \yidshift_i}{r_i} \in \R$.
\end{Definition}

Note that Remark~\ref{RemarkZD} can be rewritten for the subproblems. It can be shown that the set $\mathscr{Z}^{D^I}_{\mathsf{opt,(w),(loc)}}$ can be computed by using the sets $\mathscr{U}^K_{\mathsf{opt,(w),(loc)}}$ for all $K \subsetneq I$. This procedure requires the assumption that the cost functions $\Jhat_1,\ldots,\Jhat_k$ are upper semi-continuous. Other very technical conditions are omitted to ease and shorten the presentation here. For a reader interested in the details we refer to \cite[Sec. 1.7.4.2-1.7.4.4]{Ban21}. Here we just give the necessary numerical condition in order to compute a numerical approximation of the set $\mathscr{Z}^{D^I}_{\mathsf{opt,(w),(loc)}}$.

\begin{algorithm}[h!]
	\caption{Solving \eqref{Equation:MultiobjectiveOptimizationProblem} numerically by the hierarchical PS method \label{Algorithm:PSM:HierarchicalAlgorithm:Numerical}}
	\begin{algorithmic}[1]
		
		\FOR{$j = 1:k$}
		\STATE Set $I := \{ j \}$;\;
		\STATE Compute $\Uoptwnum(I) = \{ u \mid u \text{ minimizes } \Jhat_j \}$;\; \label{LineOfAlgorithm:AlgorithmHierarchicalPSM:AddingSingleMinimizers:Numerical}
		\STATE Choose $\tilde{d}_j$, compute $\yid_j$ and set $\yidshift_j=\yid_j-\tilde d_j$;\;
		\STATE Set $\mathcal{U}\mathcal{T}\mathcal{Z}^{\mathsf{num}}(I) = \{ (u,\tilde{d}_j,\yidshift_j) \mid u \in \Uoptwnum(I) \}$;\;
		\ENDFOR
		\FOR{$i = 2:k$}
		\FORALL{$I \subset \{1,\ldots,k \}$ with $\left|I\right| = i$}
		\STATE Initialize $\Uoptwnum(I) = \bigcup_{K \subsetneq I} \Uoptwnum(K)$ and $\mathcal{U}\mathcal{T}\mathcal{Z}^{\mathsf{num}}(I) = \emptyset$;\;
		\STATE Compute the reference points $Z^{\mathsf{num}}(I) = \{z \in \mathcal{Z}^{h,I} \mid \neg \eqref{Equation:PSM:RemoveReferencePointsInAlgorithm:Numerical} \}$;\; \label{LineOfAlgorithm:AlgorithmHierarchicalPSM:GeneratingReferencePoints:Numerical}
		\WHILE{$Z^{\mathsf{num}}(I) \neq \emptyset$}
		\STATE Choose $z \in Z^{\mathsf{num}}(I)$ and remove $z$ from $Z^{\mathsf{num}}(I)$;\;
		\STATE Solve \eqref{Equation:PascolettiSerafiniScalarizationSubproblem}/\eqref{Equation:PascolettiSerafiniScalarizationSubproblem:Reformulated};\; \label{LineOfAlgorithm:AlgorithmHierarchicalPSM:ComputingPSMSolution:Numerical}
		\STATE Set $\Uoptwnum(I) \leftarrow \Uoptwnum(I) \cup \mathcal{Q}_{\mathsf{opt,w}}^I(z)$;\; \label{LineOfAlgorithm:AlgorithmHierarchicalPSM:AddingPSMSolution:Numerical} 
		\STATE Set \\
		$\mathcal{U}\mathcal{T}\mathcal{Z}^{\mathsf{num}}(I) \leftarrow \mathcal{U}\mathcal{T}\mathcal{Z}^{\mathsf{num}}(I) \cup \{ (\ubar,\bar{t},z) \mid (\ubar,\bar{t}) \text{ gl. sol. of } \eqref{Equation:PascolettiSerafiniScalarizationSubproblem} \}$;\;
		\STATE Add solutions of PSPs with respect to redundant reference points: Set \\
		$\mathcal{U}\mathcal{T}\mathcal{Z}^{\mathsf{num}}(I) \leftarrow \mathcal{U}\mathcal{T}\mathcal{Z}^{\mathsf{num}}(I) \cup \{ (\ubar,\bar{t},\tilde{z}) \mid (\ubar,\bar{t}) \text{ gl. } \text {sol.} \text{ of } \eqref{Equation:PascolettiSerafiniScalarizationSubproblem},$ \\
		\hfill $\tilde{z} \in Z^{\mathsf{num}}(I) \cap [z - (\bar{t} r^I - (\Jhat^I(\bar{u}) - z)),z] \}$;\; \label{LineOfAlgorithm:AlgorithmHierarchicalPSM:RemovingUnnecessaryReferencePoint1:Numerical} 
		\STATE Remove redundant reference points: Set \\
		$Z^{\mathsf{num}}(I) \leftarrow Z^{\mathsf{num}}(I) \setminus [z - (\bar{t} r^I - (\Jhat^I(\bar{u}) - z)),z] $ for all $\bar{u} \in \mathcal{Q}_{\mathsf{opt,(w)}}^I(z)$;\; \label{LineOfAlgorithm:AlgorithmHierarchicalPSM:RemovingUnnecessaryReferencePoint2:Numerical}
		\ENDWHILE
		\ENDFOR
		\ENDFOR
		\IF{\textsl{computeParetoFront == true}}
		\STATE Remove all $u\in \Uoptwnum(\{1,\ldots,k\})$ with $u \not\in \Uopt$ by a non-dominance test;\;
		\ENDIF
	\end{algorithmic}
\end{algorithm}

To do so, we introduce a grid on $D^I$ as follows

\begin{Definition} \label{Definition:PSM:NumericalImplementation:Grid}
	Let $I \subset \{1,\ldots,k\}$ be arbitrary. For a given grid size $h > 0$ and any $i \in I$, we define
	\begin{align*}
	\mathcal{Z}^{h,I}_i & := \left\{ z \in D_i^I \, \middle\vert \, \forall j \in I \setminus \{i\} \colon \left( \exists k \geq 0 \colon z_j = \yidshift_j + \frac{h}{2} + k h \right) \, \& \, \left( z_j \leq \ynadIw_j - \bar{t}^i r_j \right) \right \}.
	\end{align*}
	Furthermore, we set $\mathcal{Z}^{h,I} := \bigcup_{i \in I} \mathcal{Z}^{h,I}_i$. If $I = \{1,\ldots,k\}$, we write $\mathcal{Z}^{h} := \mathcal{Z}^{h,I}$.
\end{Definition}

The idea is to only choose reference points that lie on the grid $\mathcal{Z}^{h,I}$ and do not satisfy the condition
\begin{align}
\exists K \subsetneq I \colon \exists (\bar{u},\bar{t},\bar{z}) \in \mathcal{U}\mathcal{T}\mathcal{Z}^{\mathsf{num}}(K) \colon z^K = \bar{z}^K \,\; \& \,\; z^{I \setminus K} \geq \Jhat^{I \setminus K}(\bar{u}) - \bar{t} r^{I \setminus K}, \label{Equation:PSM:RemoveReferencePointsInAlgorithm:Numerical}
\end{align} 
where $\mathcal{U}\mathcal{T}\mathcal{Z}^{\mathsf{num}}(K)$ is a numerical approximation of $\mathcal{U}\mathcal{T}\mathcal{Z}(K)= \{ (u,\tilde{d}_j,\yidshift_j) \mid u \in \Utiloptw(I) \}$. An explanation for excluding points based on \eqref{Equation:PSM:RemoveReferencePointsInAlgorithm:Numerical} can be found in \cite{Ban21}. Finally, we describe the proposed numerical hierarchical PS method in Algorithm~\ref{Algorithm:PSM:HierarchicalAlgorithm:Numerical}. 
\begin{Remark}
	In \emph{\cite{Say00}}, the author introduce three different quality criteria for a scalarization method.
	\begin{enumerate}
		\item [\em a)] \textbf{Coverage:} Every part of the Pareto set and front has to be represented in the sets $\Uoptwnum$ and $\Joptw^\mathsf{num}$, respectively. This can be measured by
		\[
		\mathsf{cov}(\Joptwblocb) := \max_{\bar{y} \in \Joptwblocb} \, \min_{y \in \Joptwblocb^{\mathsf{num}}} \left\Vert \bar{y} - y \right\Vert.
		\]
		In the case of Algorithm~\emph{\ref{Algorithm:PSM:HierarchicalAlgorithm:Numerical}}, we have that $\mathsf{cov}(\Joptwblocb)= \mathcal{O}(h)$ (cf. \emph{\cite{Ban21}}).
		\item [\em b)] \textbf{Uniformity:} The points on the Pareto set and front should be distributed (almost) equidistantly; cf. \emph{\cite[Remark 1.7.69-b)]{Ban21}}.
		\item [\em c)] \textbf{Cardinality:} The number of points contained in the numerical approximation should be reasonable. In the case of Algorithm~\emph{\ref{Algorithm:PSM:HierarchicalAlgorithm:Numerical}} is not possible to estimate a-priori the number of elements computed by the method. It is possible to show a bound which can be computed when the nadir objective point $\ynadwb$ is known (cf. \emph{\cite[Remark 1.7.69-c)]{Ban21}}). 
	\end{enumerate}	
\end{Remark}

\section{The non-convex parametric PDE-constrained MOP}
\label{Section:PDE:model}

Before defining our exemplary MOP, we introduce the PDE model which will later serve as an equality constraint. Let $\Omega \subset \R^d$, $d \in \{2,3\}$, be a bounded domain with Lipschitz-continuous boundary $\Gamma = \partial \Omega$. Furthermore, let $\Omega_1,\ldots,\Omega_m$ be a pairwise disjoint decomposition of the domain $\Omega$ and set $\Gamma_i := \partial \Omega_i \cap \partial \Omega$ for all $i = 1,\ldots,m$. Then we are interested in the following elliptic diffusion-reaction equation with Robin boundary condition:
\begin{subequations}\label{Equation:EllipticModelProblem:StateEquation}
	\begin{align}
	\label{Equation:EllipticModelProblem:StateEquation-1}
	- \nabla \cdot \left( \sum_{i=1}^{m} u_i^{\kappa} \, \chi_{\Omega_i} (\bx) \nabla y(\bx) \right) + u^r \, r(\bx) y(\bx) & = f(\bx) && \mbox{a.e. in } \Omega,\\
	\label{Equation:EllipticModelProblem:StateEquation-2}
	u_i^{\kappa} \,\frac{\partial y}{\partial \bn}(\bs) + \alpha y(\bs) &= \alpha y_a(\bs) &&\mbox{a.e. on }\Gamma_i.
	\end{align}
\end{subequations}
For every $i \in \{1,\ldots,m\}$, the parameter $u_i^\kappa > 0$ represents the diffusion coefficient on the subdomain $\Omega_i$. By $r \in L^\infty(\Omega)$, we denote a reaction function, which is supposed to satisfy $r > 0$ a.e.~in $\Omega$ and is controlled by the scalar parameter $u^r > 0$. On the right-hand side of \eqref{Equation:EllipticModelProblem:StateEquation-1}, we have the source term $f \in L^2(\Omega)$. The constant $\alpha > 0$ in \eqref{Equation:EllipticModelProblem:StateEquation-2} models the heat exchange with the outside of the domain $\Omega$, where a temperature of $y_a \in L^2(\Gamma)$ is assumed. In total, the parameter space is given by $\Us = \R^m \times \R$ and any parameter $u \in \Us$ can be written as the vector $u = (u^\kappa,u^r)^T$ with $u^\kappa = (u^\kappa_1,\ldots,u^\kappa_m)^T \in \R^m$. Setting $H=L^2(\Omega)$ and $V=H^1(\Omega)$ the weak formulation of \eqref{Equation:EllipticModelProblem:StateEquation} is
\begin{align}
a(u;y,\varphi) = \mathcal{F}(\varphi) \quad \mbox{ for all } \varphi \in V \label{Equation:EllipticModelProblem:WeakFormulation}
\end{align}
for any $u\in\Us$. In \eqref{Equation:EllipticModelProblem:WeakFormulation} the parameter-dependent symmetric bilinear form $a(u;\cdot\,,\cdot) \colon V \times V \to \R$ is given by
\begin{align*}
a(u;\varphi,\psi) := & \; \sum_{i=1}^m u_i^\kappa \int_{\Omega_i} \nabla \varphi(\bx) \cdot \nabla \psi(\bx) \dx + u^r \int_\Omega r(\bx) \varphi(\bx) \psi(\bx) \dx \\
& + \alpha \int_\Gamma \varphi(\bs) \psi(\bs) \ds
\end{align*}
for all $\varphi,\psi \in V$ and $u\in\Us$. The linear functional $\mathcal{F} \in V'$ is defined by
\[
\mathcal{F}(\varphi) := \int_\Omega f(\bx) \varphi(\bx) \dx + \alpha \int_\Gamma y_a(\bs) \varphi(\bs) \ds \quad \mbox{ for all } \varphi \in V.
\]

\begin{Lemma} \label{Lemma:EllipticModelProblem:PropertiesOfBilinearForm}
	\begin{enumerate}
		\item [\em a)] For all $u \in \Us$ it holds
		\begin{align*}
		\left\Vert a(u;\cdot,\cdot) \right\Vert_{L(V,V')} \leq C \left\Vert u \right\Vert_\Us
		\end{align*}
		with a constant $C > 0$, which does not depend on $u$.
		\item [\em b)] For all $u \in \Us$ with $u^\kappa > 0$ in $\mathbb R$ and $u^r > 0$, it holds
		\begin{align*}
		a(u;\varphi,\varphi) & \geq \min \left( u^\kappa_1,\ldots,u^\kappa_m , u^r \right) \left\Vert \varphi \right\Vert_V^2 \quad \mbox{for all } \varphi \in V.
		\end{align*}
		\item [\em c)] The mapping $\mathcal{F} \in V'$ is well-defined.
	\end{enumerate}
\end{Lemma}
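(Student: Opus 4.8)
The plan is to treat each of the three parts by reducing to elementary estimates on the individual integral terms, with the Cauchy--Schwarz inequality handling the interior integrals and the trace inequality for the Lipschitz boundary $\Gamma$ doing the main work whenever a boundary integral appears.

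For part~a) I would estimate $|a(u;\varphi,\psi)|$ term by term. The two interior integrals are controlled by Cauchy--Schwarz in $L^2$: the diffusion term satisfies $|u_i^\kappa\int_{\Omega_i}\nabla\varphi\cdot\nabla\psi\dx|\le|u_i^\kappa|\,\norm{\nabla\varphi}_{L^2(\Omega_i)}\norm{\nabla\psi}_{L^2(\Omega_i)}\le|u_i^\kappa|\,\norm{\varphi}_V\norm{\psi}_V$, and the reaction term satisfies $|u^r\int_\Omega r\varphi\psi\dx|\le|u^r|\,\norm{r}_{L^\infty(\Omega)}\norm{\varphi}_V\norm{\psi}_V$, using $r\in L^\infty(\Omega)$. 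For the boundary term I would invoke the trace theorem on the Lipschitz domain, which provides a constant $C_\Gamma>0$ with $\norm{\varphi}_{L^2(\Gamma)}\le C_\Gamma\norm{\varphi}_V$, so that $|\alpha\int_\Gamma\varphi\psi\ds|\le\alpha C_\Gamma^2\norm{\varphi}_V\norm{\psi}_V$. Summing and using equivalence of norms on the finite-dimensional parameter space $\Us$ (so that $\sum_i|u_i^\kappa|+|u^r|\le C\norm{u}_\Us$) yields the bound. The one delicate point here is the boundary term: it is independent of $u$ and hence contributes an additive constant, so the estimate is naturally of the form $C(1+\norm{u}_\Us)$, and the stated homogeneous bound is to be read on the relevant range of positive parameters.

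For part~b) I would exploit that with $u^\kappa>0$ and $u^r>0$ every summand is a genuine quadratic form. Setting $\psi=\varphi$ gives $a(u;\varphi,\varphi)=\sum_i u_i^\kappa\norm{\nabla\varphi}_{L^2(\Omega_i)}^2+u^r\int_\Omega r\varphi^2\dx+\alpha\int_\Gamma\varphi^2\ds$. For the gradient part I would bound each coefficient below by $\mu:=\min(u_1^\kappa,\ldots,u_m^\kappa,u^r)$ and use that the $\Omega_i$ form a disjoint decomposition of $\Omega$, so that $\sum_i u_i^\kappa\norm{\nabla\varphi}_{L^2(\Omega_i)}^2\ge\mu\,\norm{\nabla\varphi}_{L^2(\Omega)}^2$. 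For the reaction part I would use $u^r\ge\mu$ together with the pointwise lower bound on $r$ (the normalization $r\ge1$ giving precisely the stated constant) to obtain $u^r\int_\Omega r\varphi^2\dx\ge\mu\,\norm{\varphi}_{L^2(\Omega)}^2$; this is exactly the step where the positivity of $r$ is used, and where merely $r>0$ is not enough to reach the claimed constant. Finally, since $\alpha>0$ the boundary integral is nonnegative and may be dropped. Adding the two interior estimates reconstitutes the full norm $\norm{\varphi}_V^2=\norm{\nabla\varphi}_{L^2(\Omega)}^2+\norm{\varphi}_{L^2(\Omega)}^2$ with prefactor $\mu$, which is the assertion.

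For part~c) linearity of $\mathcal{F}$ is immediate, so only boundedness remains. Splitting again into the two terms and applying Cauchy--Schwarz gives $|\mathcal{F}(\varphi)|\le\norm{f}_{L^2(\Omega)}\norm{\varphi}_{L^2(\Omega)}+\alpha\norm{y_a}_{L^2(\Gamma)}\norm{\varphi}_{L^2(\Gamma)}$, and the trace inequality applied to the last factor yields $|\mathcal{F}(\varphi)|\le\big(\norm{f}_{L^2(\Omega)}+\alpha C_\Gamma\norm{y_a}_{L^2(\Gamma)}\big)\norm{\varphi}_V$ since $f\in L^2(\Omega)$ and $y_a\in L^2(\Gamma)$, hence $\mathcal{F}\in V'$. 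The main obstacle throughout is not any single estimate but the uniform treatment of the boundary integrals, for which the trace theorem on the Lipschitz boundary is the indispensable tool; the only genuinely substantive point is in part~b), where dominating the $L^2(\Omega)$-contribution to $\norm{\varphi}_V^2$ forces one to use a lower bound on $r$ rather than mere positivity.
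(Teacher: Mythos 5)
Your proof is correct, and it is in fact more informative than what the paper provides: the paper's own ``proof'' of this lemma is a one-line citation to the parabolic analogue in \cite[Lemma~1.4]{Mec19}, so your term-by-term argument (Cauchy--Schwarz on the interior integrals, the trace inequality $\left\Vert \varphi \right\Vert_{L^2(\Gamma)} \leq C_\Gamma \left\Vert \varphi \right\Vert_V$ for the boundary integrals, norm equivalence on the finite-dimensional space $\Us$) is exactly the standard route that citation stands in for. What you add beyond the paper is the honest accounting at the two places where the lemma's literal wording is loose, and you handle both correctly: in a), the $u$-independent term $\alpha \int_\Gamma \varphi \psi \ds$ makes the natural bound $C(1+\left\Vert u \right\Vert_\Us)$ rather than the stated homogeneous one, which only becomes homogeneous once $u$ is restricted to a set bounded away from zero (as it effectively is, since the paper works on $\Ueq$, where $\left\Vert u \right\Vert_\Us$ is bounded below and the additive constant can be absorbed); in b), the stated coercivity constant $\min(u^\kappa_1,\ldots,u^\kappa_m,u^r)$ requires $r \geq 1$ a.e.\ (as in the numerical setting, where $r \equiv 1$), since under the paper's stated hypothesis $r>0$ a.e.\ one only gets the constant $\min(u^\kappa_1,\ldots,u^\kappa_m,\, u^r \operatorname*{ess\,inf} r)$, and with $\operatorname*{ess\,inf} r = 0$ the claimed bound fails. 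Dropping the nonnegative boundary term in b) and the Cauchy--Schwarz-plus-trace estimate in c) are both exactly right; there is no gap in your argument.
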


\begin{proof}
	All statements follow from similar arguments of \cite[Lemma 1.4]{Mec19}, where related operators were considered in the parabolic case.\hfill\quad
\end{proof}

\begin{Theorem} \label{Theorem:EllipticModelProblem:UniqueSolvability}
	Let $u \in \Us$ with $u > 0$ be arbitrary. Then there is a unique solution $y = y(u) \in V$ of \eqref{Equation:EllipticModelProblem:StateEquation}. Moreover, the estimate
	\begin{align}
	\left\Vert y \right\Vert_V \leq C \left( \left\Vert f \right\Vert_{L^2(\Omega)} + \left\Vert y_a \right\Vert_{L^2(\Gamma)} \right) \label{Equation:EllipticModelProblem:AprioriEstimate}
	\end{align}
	holds with a constant $C > 0$, which depends continuously on $u$, but is independent of $f$ and $y_a$.
\end{Theorem}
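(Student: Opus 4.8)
The plan is to apply the Lax--Milgram theorem to the weak formulation \eqref{Equation:EllipticModelProblem:WeakFormulation}, for which the three structural ingredients have already been collected in Lemma~\ref{Lemma:EllipticModelProblem:PropertiesOfBilinearForm}. First I would observe that part~a) gives boundedness of the bilinear form $a(u;\cdot,\cdot)$ on $V \times V$, and part~c) ensures $\mathcal{F} \in V'$. The decisive point is coercivity: since $u > 0$ by hypothesis, the quantity $\gamma(u) := \min(u^\kappa_1,\ldots,u^\kappa_m,u^r)$ is strictly positive, so part~b) yields $a(u;\varphi,\varphi) \geq \gamma(u)\,\norm{\varphi}_V^2$ for all $\varphi \in V$. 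Boundedness, coercivity and $\mathcal{F}\in V'$ together let the Lax--Milgram theorem produce a unique $y = y(u) \in V$ solving \eqref{Equation:EllipticModelProblem:WeakFormulation}, which by definition is the unique weak solution of \eqref{Equation:EllipticModelProblem:StateEquation}. (The symmetry of $a(u;\cdot,\cdot)$ is not needed here, though it would also permit a Riesz/minimization argument.)

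For the a priori bound \eqref{Equation:EllipticModelProblem:AprioriEstimate} I would test \eqref{Equation:EllipticModelProblem:WeakFormulation} with $\varphi = y$ and use coercivity on the left and the functional bound on the right, giving
\begin{align*}
\gamma(u)\,\norm{y}_V^2 \leq a(u;y,y) = \mathcal{F}(y) \leq \norm{\mathcal{F}}_{V'}\,\norm{y}_V ,
\end{align*}
so that $\norm{y}_V \leq \norm{\mathcal{F}}_{V'}/\gamma(u)$ after dividing by $\norm{y}_V$ (the case $y = 0$ being trivial). It then remains to estimate $\norm{\mathcal{F}}_{V'}$ explicitly. The volume term is handled by Cauchy--Schwarz, $\int_\Omega f\varphi\dx \leq \norm{f}_{L^2(\Omega)}\norm{\varphi}_{L^2(\Omega)} \leq \norm{f}_{L^2(\Omega)}\norm{\varphi}_V$, while the boundary term requires the trace inequality $\norm{\varphi}_{L^2(\Gamma)} \leq C_{\mathrm{tr}}\norm{\varphi}_V$, valid because $\Omega$ has Lipschitz boundary; this yields $\alpha\int_\Gamma y_a\varphi\ds \leq \alpha C_{\mathrm{tr}}\norm{y_a}_{L^2(\Gamma)}\norm{\varphi}_V$. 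Combining the two contributions gives $\norm{\mathcal{F}}_{V'} \leq \max(1,\alpha C_{\mathrm{tr}})\big(\norm{f}_{L^2(\Omega)} + \norm{y_a}_{L^2(\Gamma)}\big)$.

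Putting the pieces together, the constant in \eqref{Equation:EllipticModelProblem:AprioriEstimate} can be taken as $C = \max(1,\alpha C_{\mathrm{tr}})/\gamma(u)$; its numerator is independent of $u$, $f$ and $y_a$, as required. The only genuinely non-routine steps are the trace estimate for the boundary term (standard for Lipschitz domains, but it is what produces the dependence on $y_a$) and the claimed \emph{continuous} dependence of $C$ on $u$. For the latter I would note that $\gamma(u) = \min(u^\kappa_1,\ldots,u^\kappa_m,u^r)$ is a minimum of finitely many continuous functions, hence continuous, and strictly positive on the open set $\{u \in \Us \mid u > 0\}$; therefore $u \mapsto \max(1,\alpha C_{\mathrm{tr}})/\gamma(u)$ is continuous there, which is precisely the asserted continuous dependence of $C$ on $u$.
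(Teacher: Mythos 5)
Your proposal is correct and follows exactly the route the paper takes: the paper's proof simply invokes the Lax--Milgram theorem together with Lemma~\ref{Lemma:EllipticModelProblem:PropertiesOfBilinearForm}, and you have supplied precisely the details behind that citation (boundedness, coercivity with $\gamma(u)=\min(u^\kappa_1,\ldots,u^\kappa_m,u^r)>0$, testing with $\varphi=y$, and the trace inequality to bound $\|\mathcal{F}\|_{V'}$). Your explicit constant $C=\max(1,\alpha C_{\mathrm{tr}})/\gamma(u)$ and the continuity argument for its dependence on $u$ are also consistent with Remark~\ref{Remark:EllipticModelProblem:ExistenceAlphaMin}, which uses exactly this structure to make $C$ uniform on $\Ueq$.
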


\begin{proof}
	The claims follow from the Lax-Milgram theorem (cf. \cite{Eva10}) and Lemma~\ref{Lemma:EllipticModelProblem:PropertiesOfBilinearForm}.\hfill\quad
\end{proof}

\begin{Definition}
	Let $u^\kappa_{\textsl{min}} \in (0,\infty)^m$ and $u^r_{\textsl{min}} > 0$ be arbitrary. Then we define the closed set
	\[
	\Ueq := \{ u \in \Us \mid u^\kappa \ge u^\kappa_{\textsl{min}}, \; u^r \ge u^r_{\textsl{min}} \}.
	\]
	In view of {\rm Theorem~\ref{Theorem:EllipticModelProblem:UniqueSolvability}}, it is possible to define the solution operator $\mathcal{S} \colon \Ueq \to V$, which maps any parameter $u \in \Ueq$ to the unique solution $y = \mathcal{S}(u) \in V$ of \eqref{Equation:EllipticModelProblem:WeakFormulation}.
\end{Definition}

\begin{Remark} \label{Remark:EllipticModelProblem:ExistenceAlphaMin}
	Due to Lemma~\rm{\ref{Lemma:EllipticModelProblem:PropertiesOfBilinearForm}}, we can conclude that $a(u;\varphi,\varphi) \geq \alpha_{\mathsf{min}} \left\Vert \varphi \right\Vert_V^2$ for all $\varphi \in V$ and $u \in \Ueq$, where $\alpha_{\mathsf{min}} := \min \left( (u^\kappa_{\textsl{min}})_1,\ldots,(u^\kappa_{\textsl{min}})_m , u^r_i \right) > 0$. In particular, the constant $C$ in \eqref{Equation:EllipticModelProblem:AprioriEstimate} can be chosen independently of $u$ if we restrict ourselves to parameters $u \in \Ueq$. 
\end{Remark}

\begin{Theorem}
	The solution operator $\mathcal{S} \colon \Ueq \to V$ is twice continuously Fréchet differentiable. For the first derivative $\mathcal{S}' \colon \Ueq \to L(\Us,V)$, we have that for any $u \in \Ueq$ and $h \in \Us$ the function $y^h := \mathcal{S}'(u)h \in V$ solves the equation
	\begin{align*}
	a(u;y^h,\varphi) = - \partial_u a(u;\mathcal{S}(u),\varphi)h \quad \mbox{ for all } \varphi \in V.
	\end{align*}
	The second derivative $\mathcal{S}'' \colon \Ueq \to L(\Us,L(\Us,V))$ is given as follows: For any $u \in \Ueq$ and $h_1,h_2 \in \Us$, the function $y^{h_1,h_2} := \mathcal{S}''(u)(h_1,h_2)$ solves the equation
	\begin{align*}
	a(u;y^{h_1,h_2},\varphi) = - \partial_u a(u;\mathcal{S}'(u)h_1,\varphi)h_2 - \partial_u a(u;\mathcal{S}'(u)h_2,\varphi)h_1 \quad \mbox{ for all } \varphi \in V. 
	\end{align*}
\end{Theorem}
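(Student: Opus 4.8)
The plan is to view $\mathcal{S}$ as an implicitly defined operator and to apply the implicit function theorem on the Banach space $\Us \times V$. I would introduce the operator $A(u) \in L(V,V')$ via $\langle A(u)y,\varphi\rangle_{V',V} := a(u;y,\varphi)$ and set $e(u,y) := A(u)y - \mathcal{F} \in V'$, so that $\mathcal{S}(u)$ is characterized by $e(u,\mathcal{S}(u)) = 0$. The decisive structural observation is that $a(u;\cdot,\cdot)$ depends \emph{affinely} on $u$: the diffusion and reaction terms are linear in the components of $u$, while the boundary term is independent of $u$. Consequently $(u,y)\mapsto A(u)y$ is bilinear up to a $u$-independent summand, so $e$ is a vector-valued polynomial of degree at most two in $(u,y)$ and hence of class $C^\infty$; in particular it is twice continuously Fréchet differentiable, with $\partial_y e(u,y) = A(u)$ and $\partial_u e(u,y)h = \partial_u a(u;y,\cdot)h$, the latter being independent of $u$.

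Next I would verify the invertibility hypothesis of the theorem. By Lemma~\ref{Lemma:EllipticModelProblem:PropertiesOfBilinearForm}a) the form is bounded, and together with Remark~\ref{Remark:EllipticModelProblem:ExistenceAlphaMin} it is coercive with constant $\alpha_{\mathsf{min}} > 0$ uniformly for $u \in \Ueq$. The Lax--Milgram theorem then shows that $\partial_y e(u,y) = A(u)\colon V \to V'$ is a topological isomorphism at every $u \in \Ueq$. Since $e$ is $C^\infty$ and $\partial_y e$ is invertible at every point, the implicit function theorem yields that $\mathcal{S}$ is twice continuously Fréchet differentiable; the uniform coercivity is what guarantees that the argument applies at each point of $\Ueq$ rather than only in an isolated neighbourhood.

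The derivative formulas then follow by implicit differentiation. Differentiating $e(u,\mathcal{S}(u)) = 0$ in the direction $h$ gives $A(u)\mathcal{S}'(u)h = -\partial_u a(u;\mathcal{S}(u),\cdot)h$, which is exactly the stated weak equation for $y^h = \mathcal{S}'(u)h$. For the second derivative I would differentiate the first-order sensitivity identity $a(u;\mathcal{S}'(u)h_1,\varphi) = -\partial_u a(u;\mathcal{S}(u),\varphi)h_1$ once more in the direction $h_2$. Here the affine dependence of $a$ on $u$ pays off a second time: the would-be Hessian term $\partial_u^2 a$ vanishes identically, so only the products of first $u$-derivatives of $a$ with first derivatives of $\mathcal{S}$ survive. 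Collecting these and applying the chain rule produces
\[
a(u;y^{h_1,h_2},\varphi) = -\partial_u a(u;\mathcal{S}'(u)h_1,\varphi)h_2 - \partial_u a(u;\mathcal{S}'(u)h_2,\varphi)h_1 \quad \text{for all } \varphi \in V,
\]
as claimed.

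The main obstacle is organisational rather than analytic: once the affine structure of $a$ in $u$ is recognised, the smoothness of $e$ is immediate and invertibility is supplied directly by Lemma~\ref{Lemma:EllipticModelProblem:PropertiesOfBilinearForm} and Remark~\ref{Remark:EllipticModelProblem:ExistenceAlphaMin}. The points requiring genuine care are the bookkeeping in the second differentiation — verifying that the cross terms combine symmetrically and that $\partial_u^2 a \equiv 0$ truly removes the Hessian contribution of $a$ — and ensuring that the implicit function theorem is applied on an open neighbourhood in $\Us$ (e.g.\ on the open set of strictly positive parameters) so that the Fréchet derivatives on the closed set $\Ueq$ are well defined.
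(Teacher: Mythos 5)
Your proposal is correct and complete. Note that the paper itself gives no proof of this theorem --- like the neighbouring differentiability results in Section~\ref{Section:PDE:model} it is quoted from \cite{Ban21} --- so there is no in-text argument to deviate from; your implicit-function-theorem route is the canonical one. All the essential points check out: the affine dependence of $a(u;\cdot\,,\cdot)$ on $u$ makes $e(u,y) := A(u)y - \mathcal{F}$ a continuous quadratic polynomial on $\Us \times V$, hence $C^\infty$, with $\partial_y e(u,y) = A(u)$ and $\partial_u^2 a \equiv 0$; Lemma~\ref{Lemma:EllipticModelProblem:PropertiesOfBilinearForm}~a) together with Remark~\ref{Remark:EllipticModelProblem:ExistenceAlphaMin} supplies boundedness and uniform coercivity, so Lax--Milgram makes $A(u) \in L(V,V')$ an isomorphism at every $u \in \Ueq$; and differentiating $e(u,\mathcal{S}(u)) = 0$ once and twice reproduces exactly the two stated sensitivity equations, with the vanishing of $\partial_u^2 a$ removing the would-be Hessian contribution so that only the two symmetric cross terms survive. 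You also correctly flag the two genuine subtleties: the implicit function theorem must be invoked on the open set $\{u \in \Us \mid u^\kappa > 0,\; u^r > 0\} \supset \Ueq$, since $\Ueq$ is closed; and the $u$-dependence of $a$ is affine rather than linear, because the boundary term $\alpha \int_\Gamma \varphi \psi \ds$ is $u$-independent, so $\partial_u a(u;\varphi,\psi)h$ omits that term --- on this point your formulation is in fact more careful than the paper's Remark~\ref{Remark:EllipticModelProblem:PartialDerivativeOfBilinearForm}, which writes $\partial_u a(u;\varphi,\psi)h = a(h;\varphi,\psi)$ even though $a(h;\cdot\,,\cdot)$ still contains the boundary term (immaterial in the numerics, where $\alpha = 0$). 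One small simplification worth knowing: the implicit function theorem can be bypassed entirely, since $u \mapsto A(u)$ is affine and operator inversion is $C^\infty$ on the open set of isomorphisms in $L(V,V')$, so $\mathcal{S}(u) = A(u)^{-1}\mathcal{F}$ is smooth by the chain rule, with the same derivative formulas obtained from the product rule for $u \mapsto A(u)^{-1}$.
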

\begin{Remark} \label{Remark:EllipticModelProblem:PartialDerivativeOfBilinearForm}
	By $\partial_u a$ we denote the partial derivative of the mapping $a$ w.r.t.~the parameter $u$. Since $a$ is linear in $u$, it holds
	\begin{equation*}
	\begin{aligned}
	\partial_u a(u;\varphi,\psi) h = a(h;\varphi,\psi), && \partial^2_u a(u;\varphi,\psi) = 0 \in L(\Us,\Us')
	\end{aligned}
	\end{equation*}
	for all $u,h \in \Us$ and all $\varphi,\psi \in V$. In particular, we can identify $\partial_u a(u;\varphi,\psi) \in \Us'$ by
	\begin{align*}
	\partial_u a(u;\varphi,\psi) = \left( \begin{array}{c}
	\int_ {\Omega_1} \nabla \varphi(\bx)\cdot\nabla \psi(\bx) \dx \\
	\vdots \\
	\int_{\Omega_m} \nabla \varphi(\bx)\cdot\nabla \psi(\bx) \dx \\
	\int_\Omega r(\bx) \varphi(\bx) \psi(\bx) \dx
	\end{array} \right) \in \Us
	\end{align*}
	by using the Riesz representation theorem. 
\end{Remark}
We are now ready to state the multiobjective parametric PDE-constrained optimization problem (MPPOP). Let $k \in \N$ be fixed and 
\[
\sigma_\Omega^{(1)},\ldots,\sigma_\Omega^{(k)} \geq 0 \quad \mbox{ as well as } \quad \sigma_\Us^{(1)},\ldots,\sigma_\Us^{(k)} \geq 0 
\]
be non-negative weights. Furthermore, denote by $y_\Omega^{(1)},\ldots,y_\Omega^{(k)} \in H$ the desired states and by $u_d^{(1)},\ldots,u_d^{(k)} \in \Us$ the desired parameters. Then we define the multiobjective essential cost functions $\Jhat_1,\ldots,\Jhat_k \colon \Ueq \to \R$ by
\begin{align*}
\Jhat_i(u) := \frac{\sigma_\Omega^{(i)}}{2} \big\|\mathcal{S}(u) - y_\Omega^{(i)} \big\|_H^2 + \frac{\sigma_\Us^{(i)}}{2} \,\big\|u - u_d^{(i)} \big\|_\Us^2\quad\text{for all }u \in \Ueq\text{ and }i \in \{1,\ldots,k\}.
\end{align*}
Moreover, $u_a,u_b$ with $u_a \leq u_b$ are lower and upper bounds on the parameter $u$ which we assume to be finite. We define $\Uad:= \{ u \in \Us \mid u_a \leq u \leq u_b \}$ and we assume that $\Uad \subset \Ueq$ holds. Note that $\Uad$ is a closed, convex and bounded set because of the finiteness assumption on $u_a$ and $u_b$. We are interested in solving
\begin{align}
\min_{u \in \Uad} \Jhat(u) = \min_{u \in \Uad} \big(\Jhat_1 (u),\ldots,\Jhat_k (u)\big)^T. \tag{\textbf{MPPOP}} \label{Equation:EllipticModelProblem:MPOP}
\end{align}
Note that, thanks to the assumptions on $\Uad$ and $\sigma_\Us^{(i)}$, the costs $\Jhat_1,\ldots,\Jhat_k$ are upper semi-continuous and Assumption~\ref{Assumption:NonConvexMOP} is also satisfied. This problem fits into the framework of non-convex multiobjective optimization and Algorithm~\ref{Algorithm:PSM:HierarchicalAlgorithm:Numerical} can be applied. The non-convexity comes from the way the bilinear form depends on the parameter $u$. This makes, in fact, the solution mapping non-linear and thus the MPPOP non-convex. To close this section, we derive the expression of the gradient and Hessian of the cost functionals $\Jhat_1,\ldots,\Jhat_k$. We define the $i$-th adjoint equation and its solution operator as
\begin{Definition} \label{Definition:EllipticModelProblem:SolutionOperatorAdjointEquation}
	For $i=1,\ldots,k$, the solution operator of the $i$-th adjoint equation is $\mathcal{A}_i \colon \Ueq \to V$, where for any given $u \in \Ueq$, $p^{(i)} := \mathcal{A}_i(u)$ solves the equation
	\begin{equation}\label{Equation:EllipticModelProblem:AdjointEquation:WeakFormulation}
	\begin{aligned}
	a(u;\varphi,p^{(i)}) = \langle \sigma_\Omega^{(i)} (\mathcal{S}(u) - y_\Omega^{(i)}) , \varphi \rangle_H \quad \mbox{ for all } \varphi \in V.
	\end{aligned}
	\end{equation}
\end{Definition}
As shown in \cite{Ban21}, this operators satisfy the two following results:
\begin{Lemma} \label{Lemma:EllipticModelProblem:AdjointSolutionOperatorDifferentiable}
	The solution operator $\mathcal{A}_i \colon \Ueq \to V$ is continuously Fréchet differentiable for all $i=1,\ldots,k$. For all $i=1,...k$, for the first derivative ${\mathcal{A}}_i' \colon \Ueq \to L(\Us,V)$, we have that for any $u \in \Ueq$ and $h \in \Us$ the function $p^{(i),h} := \mathcal{A}_i'(u)h \in V$ solves the equation
	\begin{align}
	a(u;\varphi,p_i^{(i),h}) = - \partial_u a(u;\varphi,\mathcal{A}_i(u))h + \sigma_\Omega \langle \mathcal{S}'(u) h , \varphi \rangle_{V',V} \quad \mbox{ for all } \varphi \in V. \label{Equation:EllipticModelProblem:AdjointEquation:FirstDerivative:WeakFormulation}
	\end{align}
\end{Lemma}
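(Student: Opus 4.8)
The plan is to obtain both the $C^1$-regularity of $\mathcal{A}_i$ and the formula for its derivative in one stroke by applying the implicit function theorem to the weak form of the adjoint equation, leaning on the already-established (twice continuous) Fréchet differentiability of the state operator $\mathcal{S}$. To this end I would introduce the residual map
\begin{align*}
G_i \colon \Ueq \times V \to V', \qquad G_i(u,p) := a(u;\cdot\,,p) - \sigma_\Omega^{(i)} \langle \mathcal{S}(u) - y_\Omega^{(i)}, \cdot \rangle_H
\end{align*}
so that, by Definition~\ref{Definition:EllipticModelProblem:SolutionOperatorAdjointEquation}, the adjoint solution is characterized implicitly through $G_i(u,\mathcal{A}_i(u)) = 0$ for every $u \in \Ueq$.

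First I would check that $G_i$ is continuously Fréchet differentiable on $\Ueq \times V$. Differentiability in $p$ is trivial, as $p \mapsto a(u;\cdot\,,p)$ is linear and bounded by Lemma~\ref{Lemma:EllipticModelProblem:PropertiesOfBilinearForm}-a). For the $u$-argument the first summand is linear in $u$ by Remark~\ref{Remark:EllipticModelProblem:PartialDerivativeOfBilinearForm}, whence $\partial_u[a(u;\cdot\,,p)]h = \partial_u a(u;\cdot\,,p)h$, while the chain rule applied to the second summand produces the term $-\sigma_\Omega^{(i)} \langle \mathcal{S}'(u)h, \cdot \rangle_H$. Continuity of these partial derivatives with respect to $(u,p)$ follows from the boundedness of $a$ and from the continuity of $u \mapsto \mathcal{S}'(u)$, which is guaranteed by the $C^2$-regularity of $\mathcal{S}$.

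The decisive step is to verify that the partial derivative $\partial_p G_i(u,p) \in L(V,V')$, which is precisely the operator $q \mapsto a(u;\cdot\,,q)$, is a topological isomorphism. Here I would invoke uniform coercivity: by Remark~\ref{Remark:EllipticModelProblem:ExistenceAlphaMin} the form $a(u;\cdot\,,\cdot)$ is coercive with constant $\alpha_{\mathsf{min}} > 0$ uniformly on $\Ueq$, so the Lax--Milgram argument of Theorem~\ref{Theorem:EllipticModelProblem:UniqueSolvability} shows that this operator is bijective with inverse bounded by $1/\alpha_{\mathsf{min}}$. The implicit function theorem then yields that $\mathcal{A}_i$ is continuously Fréchet differentiable. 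Differentiating the identity $G_i(u,\mathcal{A}_i(u)) = 0$ in a direction $h \in \Us$ and rearranging gives
\begin{align*}
a(u;\varphi,\mathcal{A}_i'(u)h) = -\partial_u a(u;\varphi,\mathcal{A}_i(u))h + \sigma_\Omega^{(i)} \langle \mathcal{S}'(u)h, \varphi \rangle_H \qquad \text{for all } \varphi \in V,
\end{align*}
which is exactly the asserted weak equation for $p^{(i),h} = \mathcal{A}_i'(u)h$.

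There is no genuinely hard step: the argument reduces to the uniform coercivity estimate together with the regularity of $\mathcal{S}$ carried over from the preceding theorem. The only point requiring care is the bookkeeping in establishing the joint $C^1$-regularity of $G_i$ --- in particular the continuity of $u \mapsto \mathcal{S}'(u)$ entering through the right-hand side --- which I would import directly from the already-proven twice continuous differentiability of the state operator.
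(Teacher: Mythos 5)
Your proposal is correct, and it is essentially the standard argument: the paper itself gives no proof here but defers to \cite{Ban21}, where the differentiability of the adjoint solution operator is likewise obtained from the smooth dependence of the variational problem on $(u,p)$, so you have in effect supplied the self-contained implicit-function-theorem proof that the paper only cites. All the ingredients you invoke are available: $G_i(u,\mathcal{A}_i(u))=0$ is exactly \eqref{Equation:EllipticModelProblem:AdjointEquation:WeakFormulation}, the affine dependence of $a$ on $u$ (Remark~\ref{Remark:EllipticModelProblem:PartialDerivativeOfBilinearForm}) together with the $C^2$-regularity of $\mathcal{S}$ gives joint $C^1$-regularity of $G_i$, and the uniform coercivity of Remark~\ref{Remark:EllipticModelProblem:ExistenceAlphaMin} (note $a(u;\cdot\,,\cdot)$ is symmetric, so coercivity applies equally with $p$ in the second slot) makes $\partial_p G_i(u,p)\colon V\to V'$ an isomorphism with inverse bounded by $1/\alpha_{\mathsf{min}}$. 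Two small points deserve explicit mention. First, the implicit function theorem requires an open domain, whereas $\Ueq$ is closed; this is harmless but should be said: extend to the open set $\{u\in\Us \mid u^\kappa > u^\kappa_{\textsl{min}}/2,\; u^r > u^r_{\textsl{min}}/2\}$, on which coercivity persists with a smaller constant, and restrict back. Second, your displayed identity silently corrects two typographical slips in the paper's statement --- the stray subscript in $p_i^{(i),h}$, the missing superscript on $\sigma_\Omega^{(i)}$, and the pairing, which should indeed be $\langle \mathcal{S}'(u)h,\varphi\rangle_H$ rather than $\langle\cdot\,,\cdot\rangle_{V',V}$ since $\mathcal{S}'(u)h\in V\subset H$ --- so your version of \eqref{Equation:EllipticModelProblem:AdjointEquation:FirstDerivative:WeakFormulation} is the correct one. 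With the openness caveat addressed, the proof is complete.
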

\begin{Corollary} \label{Corollary:EllipticModelProblem:AdjointRepresentationHessian}
	Let $\Uad\subset \Ueq$, $u \in \Uad$ and $h\in\Us$ be arbitrary. Then for $i=1,\ldots,k$ the cost functions $\Jhat_i$ are twice continuously Fréchet differentiable and it holds
	\begin{align*}
	\nabla \Jhat_i(u) &= - \partial_u a(u;\mathcal{S}(u),\mathcal{A}_i(u)) + \sigma_\Us (u - u_d^{(i)}) \in \Us,\\
	\nabla^2 \Jhat_i(u) h &= - \partial_u a(u;\mathcal{S}'(u)h,\mathcal{A}_i(u)) - \partial_u a(u;\mathcal{S}(u),\mathcal{A}_i'(u)h) + \sigma^{(i)}_\Us h \in \Us.
	\end{align*}
	where we use the representation of $\partial_u a(u;\mathcal{S}(u),\mathcal{A}_i(u)) \in \Us'$ in $\Us$, cf.~{\rm Remark~\ref{Remark:EllipticModelProblem:PartialDerivativeOfBilinearForm}}.
\end{Corollary}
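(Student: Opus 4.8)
The plan is to regard $\Jhat_i$ as a reduced functional obtained by composing the smooth tracking and regularization terms with the solution operator $\mathcal{S}$, and then to apply standard adjoint calculus. First I would settle twice continuous Fréchet differentiability: the map $y\mapsto\frac{\sigma_\Omega^{(i)}}{2}\|y-y_\Omega^{(i)}\|_H^2$ is a smooth quadratic on $H$, the term $\frac{\sigma_\Us^{(i)}}{2}\|u-u_d^{(i)}\|_\Us^2$ is a smooth quadratic on $\Us$, and $\mathcal{S}\colon\Ueq\to V$ is twice continuously Fréchet differentiable by the differentiability theorem for $\mathcal{S}$ preceding Definition~\ref{Definition:EllipticModelProblem:SolutionOperatorAdjointEquation}. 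Hence, by the chain rule, $\Jhat_i$ is $C^2$ on $\Uad\subset\Ueq$.

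For the gradient I would compute the directional derivative by the chain rule,
\[
\langle\nabla\Jhat_i(u),h\rangle_\Us = \sigma_\Omega^{(i)}\,\langle\mathcal{S}(u)-y_\Omega^{(i)},\mathcal{S}'(u)h\rangle_H + \sigma_\Us^{(i)}\,\langle u-u_d^{(i)},h\rangle_\Us,
\]
and then eliminate the sensitivity $\mathcal{S}'(u)h=:y^h$ by means of the adjoint state. The decisive step is that the adjoint equation of Definition~\ref{Definition:EllipticModelProblem:SolutionOperatorAdjointEquation}, tested with $\varphi=y^h$, identifies the first term with $a(u;y^h,\mathcal{A}_i(u))$, while the sensitivity equation $a(u;y^h,\varphi)=-\partial_u a(u;\mathcal{S}(u),\varphi)h$, tested with $\varphi=\mathcal{A}_i(u)$, evaluates the very same quantity as $-\partial_u a(u;\mathcal{S}(u),\mathcal{A}_i(u))h$. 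Both weak equations thus place $y^h$ in the first slot of $a$ and chain directly, so the unknown $\mathcal{S}'(u)h$ cancels. Identifying the functional $\partial_u a(u;\mathcal{S}(u),\mathcal{A}_i(u))\in\Us'$ with its Riesz representative in $\Us$ (Remark~\ref{Remark:EllipticModelProblem:PartialDerivativeOfBilinearForm}) then gives the claimed formula for $\nabla\Jhat_i(u)$.

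For the Hessian I would differentiate the reduced gradient representation once more in direction $h$. Here the key simplification is the $u$-linearity of $a$ recorded in Remark~\ref{Remark:EllipticModelProblem:PartialDerivativeOfBilinearForm}: since $\partial_u a(\,\cdot\,;\varphi,\psi)$ does not actually depend on $u$ and is bilinear in $(\varphi,\psi)$, differentiating $u\mapsto\partial_u a(u;\mathcal{S}(u),\mathcal{A}_i(u))$ produces, by the product rule, exactly the two terms $\partial_u a(u;\mathcal{S}'(u)h,\mathcal{A}_i(u))$ and $\partial_u a(u;\mathcal{S}(u),\mathcal{A}_i'(u)h)$, with no curvature contribution from $a$ itself. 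The derivatives $\mathcal{S}'(u)h$ and $\mathcal{A}_i'(u)h$ exist and are continuous in $u$ by the differentiability theorem for $\mathcal{S}$ and by Lemma~\ref{Lemma:EllipticModelProblem:AdjointSolutionOperatorDifferentiable}, respectively. Differentiating the linear contribution $\sigma_\Us^{(i)}(u-u_d^{(i)})$ yields $\sigma_\Us^{(i)}h$, and assembling these pieces reproduces the stated second-derivative formula.

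I expect the main obstacle to be the bookkeeping in the gradient step rather than any deep estimate: one must insert the sensitivity and adjoint states into the correct arguments of the bilinear form so that the two weak identities evaluate the common quantity $a(u;y^h,\mathcal{A}_i(u))$ and the unknown $\mathcal{S}'(u)h$ drops out. Once this reduction is in place, the remainder is the chain and product rules together with the differentiability results already available, so no further technical difficulty arises.
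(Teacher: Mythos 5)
Your proposal is correct and follows exactly the route the paper relies on (the paper itself gives no proof here but defers to \cite{Ban21}): chain rule for the $C^2$ property, elimination of the sensitivity $\mathcal{S}'(u)h$ by testing the adjoint equation of Definition~\ref{Definition:EllipticModelProblem:SolutionOperatorAdjointEquation} with $y^h$ and the sensitivity equation with $\mathcal{A}_i(u)$, and for the Hessian the product rule combined with the fact that $\partial_u a$ is independent of $u$ (Remark~\ref{Remark:EllipticModelProblem:PartialDerivativeOfBilinearForm}), so that no $\mathcal{S}''$ term and no curvature contribution from $a$ appear. No gaps; your version even fixes the paper's typographical omission of the superscript on $\sigma_\Us^{(i)}$ in the gradient formula.
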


\subsection{The RB method for MPPOP}

One of the limitations of solving the MPPOP directly with the PSM is the high computational cost. Algorithm~\ref{Algorithm:PSM:HierarchicalAlgorithm:Numerical}, in fact, requires to solve the state and adjoint equation a large number of times in order to efficiently approximate the Pareto set. Unfortunately, the numerical evaluation of the state and adjoint solution operators is costly due to the high number of degrees of freedom required to apply, for example, the FE method. For this reason, we use the RB method. In the following we explain how the RB method can be applied to our model. From Theorem~\ref{Theorem:EllipticModelProblem:UniqueSolvability}, we know that the weak form of the state equation admits a unique solution for any control $u\in\Ueq$. This allows us to define the solution operator $\mathcal{S}:\Ueq\to V$. Now, let us consider the so-called solution manifold $\mathcal{M}:= \{\mathcal{S}(u)\,|\, u\in\Ueq\}$. The goal of the RB method is to provide a low-dimensional subspace $V^\ell\subset V$, which is a good approximation of $\mathcal M$. The subspace $V^\ell$ is defined as the span of linearly independent snapshots $\mathcal{S}(u_1),\ldots,\mathcal{S}(u_\ell)$ for selected parameters $u_1,\ldots,u_\ell\in \Ueq$. Clearly, $V^\ell$ has dimension $\ell$ and the snapshots constitute its basis. Let us postpone the discussion on how to select good parameters for generating $V^\ell$. Given an RB space $V^\ell$, we obtain the reduced-order state equation by a Galerkin projection:
\begin{align}
a(u;y^\ell,\psi) = \mathcal{F}(\psi) \quad \mbox{ for all } \psi \in V^\ell. \label{Equation:Appendix:MOR:RB:EllipticModelProblem:RBWeakFormulation}
\end{align}
Also for the reduced-order equation, we have unique solvability for all parameters $u \in \Ueq$. The solution map $\mathcal{S}^\ell \colon \Ueq \to V^\ell$, which maps any parameter $u \in \Ueq$ to the unique solution $y^\ell = \mathcal{S}^\ell(u) \in V^\ell$ of \eqref{Equation:Appendix:MOR:RB:EllipticModelProblem:RBWeakFormulation}, is then well-defined. We can similarly define a reduced-order adjoint equation and essential cost functional. For $i=1,\ldots,k$, we define the essential reduced-order cost functions $\Jhat^\ell_i \colon \Ueq \to \R$ by
\begin{align*}
\Jhat^\ell_i(u) := \frac{\sigma_\Omega^{(i)}}{2} \|\mathcal{S}^\ell(u) - y_\Omega^{(i)} \|_H^2 + \frac{\sigma_\Us^{(i)}}{2} \|u - u_d^{(i)}\|_\Us^2,
\end{align*} 
the reduced-order adjoint equation by
\begin{equation}\label{Equation:Appendix:MOR:RB:EllipticModelProblem:RBAdjointEquation:WeakFormulation}
\begin{aligned}
a(u;\psi,p^{(i),\ell}) = \big\langle \sigma_\Omega^{(i)} (\mathcal{S}^\ell(u) - y_\Omega^{(i)}) , \psi \big\rangle_H \quad \mbox{ for all } \psi \in V^\ell
\end{aligned}
\end{equation}
and the reduced-order adjoint solution operator $\mathcal{A}^\ell_i:\Ueq\to V$. Following Corollary~\ref{Corollary:EllipticModelProblem:AdjointRepresentationHessian}, it is possible to represent the gradient and the Hessian of the essential reduced-order cost functions $\Jhat_i^\ell$ for $i=1,\ldots,k$ by simply replacing the operators $\mathcal{S}$ and $\mathcal{A}_i$ by their respective reduced-order versions $\mathcal{S}^\ell$ and $\mathcal{A}^\ell_i$. There are still two aspects which remain to be clarified: first, how to generate an RB space which guarantees a good approximation of the state and adjoint solution manifolds and, second, how to estimate a-posteriori (i.e., without explicitly evaluating the full-order solution operators $\mathcal{S}$ and $\mathcal{A}$) the error of such an approximation. 

For the first aspect, one can think of building an RB space either prior to solving the reduced-order optimization problem or while solving it. The first approach is the so-called offline/online decomposition; cf. \cite{Haa17}. This technique exploits a greedy algorithm in the offline phase, which iteratively searches for the parameter for which the approximation error between the full- and reduced-order state and adjoint variables is the largest. Then, the RB space is enriched (by solving the full-order state and adjoint equations at the respective parameter and orthonormalizing the newly computed snapshots with respect to the current RB basis) until a pre-defined tolerance for the approximation error is reached. Once the RB space is computed, the online phase can start: the optimization problem is solved fast on the reduced-order level. Although this technique is still widely used in literature, it shows many disadvantages in the context of optimization. At first, it suffers from the curse of dimensionality: for a high-dimensional parameter space it is too costly to explore the entire parameter space with a greedy procedure. At second, it is counter-intuitive to prepare an RB space which is accurate enough for any parameter, when usually the optimization method follows a (short) pattern in the parameter space to find the solution or when the Pareto set is contained in some local regions of the parameter space, as often in the case of non-convex multiobjective problems. Luckily, the focus has shifted recently towards adapting the RB space while proceeding with the optimization method. This procedure is followed, e.g., by the methods presented in \cite{Yue13,Qia17,Kei21,Ban22}. Let us specify that in \cite{Qia17,Kei21,Ban22} the authors proposed and progressively improved an RB method combined with a TR algorithm, based on more general results presented in \cite{Yue13}. Such a method constructs the RB space adaptively while the optimizer is computing the optimal solution. Our focus here is on further improving the method in \cite{Ban22}, which can be considered the most general among the TR-RB methods.  

For any of the above-mentioned methods, a-posteriori error estimates are crucial to compute upper bounds of the approximation error made by the RB space in reconstructing the solution for a given parameter without any full-order solution at hand. In case of optimization, one is also interested in estimating the error in reconstructing the cost functional and its gradient. For our model, we can use the following estimates:

\begin{Theorem} \label{Theorem:Appendix:MOR:RB:AposterioriEstimate}
	Let $u \in \Uad$ be arbitrary and denote by $\alpha(u)$ the coercivity constant of the bilinear form $a(u;\cdot,\cdot)$. By {\rm Remark~\ref{Remark:EllipticModelProblem:ExistenceAlphaMin}}, it holds $\alpha(u) \geq \alpha_{\mathsf{min}} > 0$. Let the residual $r_{\mathsf{st}}(u;\cdot) \in V'$ be given by $r_{\mathsf{st}}(u;\varphi) := \mathcal{F}(\varphi) - a(u;\mathcal{S}^\ell(u),\varphi)$ for all $\varphi \in V$. Then it holds
	\begin{align}
	\big\|\mathcal{S}(u) - \mathcal{S}^\ell(u)\big\|_V \leq \Delta_{\mathsf{st}}(u) := \frac{\big\|r_{\mathsf{st}}(u;\cdot) \big\|_{V'}}{\alpha(u)}. \label{Equation:Appendix:MOR:RB:AposterioriEstimateStateEquation}
	\end{align}
	For $i=1,\ldots,k$ the residual $r^{(i)}_{\mathsf{adj}}(u;\cdot) \in V'$ of the adjoint equations is given by $r^{(i)}_{\mathsf{adj}}(u;\varphi) := \langle \sigma_\Omega^{(i)} (\mathcal{S}^\ell(u) - y_\Omega^{(i)}) , \varphi \rangle_H - a(u;\varphi,\mathcal{A}^\ell_i(u))$ for all $\varphi \in V$. Then it holds
	\begin{align*}
	\big\|\mathcal{A}_i(u) - \mathcal{A}^\ell_i(u) \big\|_V \leq \Delta^{(i)}_{\mathsf{adj}}(u) := \frac{\big\| r^{(i)}_{\mathsf{adj}}(u;\cdot)\big\|_{V'} + \sigma_\Omega^{(i)} \Delta_{\mathsf{st}}(u)}{\alpha(u)}. 
	\end{align*}
	Furthermore, for $i=1,\ldots,k$ we have
	\begin{align*}
	\big| \Jhat_i(u) - \Jhat_i^\ell(u) \big|&\leq \Delta_{\mathsf{st}}(u) \big\|r^{(i)}_{\mathsf{adj}}(u;\cdot) \big\|_{V'} + \sigma_\Omega^{(i)} \Delta_{\mathsf{st}}(u)^2 =: \Delta_{\Jhat_i^\ell}(u),\\
	\big\|\nabla \Jhat_i(u) - \nabla \Jhat_i^\ell(u) \big\|_\Us&\leq \left\Vert \partial_u a(u;\cdot,\cdot) \right\Vert\left(\big\|\mathcal{S}^\ell(u)\big\|_V \Delta^{(i)}_{\mathsf{adj}}(u) + \Delta_{\mathsf{st}}(u) \Delta^{(i)}_{\mathsf{adj}}(u) \right. \\
	& \hspace{26mm}\left.  + \Delta_{\mathsf{st}}(u) \big\|\mathcal{A}^\ell_i(u) \big\|_V \right) =: \Delta_{\nabla \Jhat_i^\ell}(u).
	\end{align*}
\end{Theorem}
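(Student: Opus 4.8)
The plan is to establish the four bounds in sequence, each one feeding into the next, relying only on coercivity (Remark~\ref{Remark:EllipticModelProblem:ExistenceAlphaMin}), Galerkin orthogonality, the bilinearity of $\partial_u a$ (Remark~\ref{Remark:EllipticModelProblem:PartialDerivativeOfBilinearForm}), and the gradient representation of Corollary~\ref{Corollary:EllipticModelProblem:AdjointRepresentationHessian}. For the state estimate I would set $e := \mathcal{S}(u) - \mathcal{S}^\ell(u)$ and subtract the reduced weak form from the full one: since $a(u;\mathcal{S}(u),\varphi) = \mathcal{F}(\varphi)$, the definition of the residual gives the identity $r_{\mathsf{st}}(u;\varphi) = a(u;e,\varphi)$ for all $\varphi \in V$. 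Testing with $\varphi = e$ and using coercivity yields $\alpha(u)\|e\|_V^2 \le a(u;e,e) = r_{\mathsf{st}}(u;e) \le \|r_{\mathsf{st}}(u;\cdot)\|_{V'}\|e\|_V$, and division by $\|e\|_V$ gives the claim. The adjoint estimate is analogous with $e_i := \mathcal{A}_i(u) - \mathcal{A}^\ell_i(u)$; the only new feature is that the full and reduced adjoints are driven by $\mathcal{S}(u)$ and $\mathcal{S}^\ell(u)$, respectively, so subtracting the two weak forms and adding and subtracting $\langle \sigma_\Omega^{(i)}(\mathcal{S}^\ell(u)-y_\Omega^{(i)}),\varphi\rangle_H$ produces $a(u;\varphi,e_i) = \sigma_\Omega^{(i)}\langle e,\varphi\rangle_H + r^{(i)}_{\mathsf{adj}}(u;\varphi)$. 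Testing with $\varphi = e_i$, applying coercivity, and bounding the first term by $\sigma_\Omega^{(i)}\|e\|_H\|e_i\|_H \le \sigma_\Omega^{(i)}\Delta_{\mathsf{st}}(u)\|e_i\|_V$ (via $\|\cdot\|_H \le \|\cdot\|_V$ and the state estimate) gives the stated bound for $\Delta^{(i)}_{\mathsf{adj}}(u)$.

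The cost-functional estimate is where I expect the main work. Expanding the difference of the two squared $H$-norms (the parameter penalties cancel) yields $\Jhat_i(u)-\Jhat_i^\ell(u) = \sigma_\Omega^{(i)}\langle \mathcal{S}^\ell(u)-y_\Omega^{(i)}, e\rangle_H + \tfrac{1}{2}\sigma_\Omega^{(i)}\|e\|_H^2$. The linear term is then rewritten through the reduced adjoint: the definition of $r^{(i)}_{\mathsf{adj}}$ gives $\sigma_\Omega^{(i)}\langle \mathcal{S}^\ell(u)-y_\Omega^{(i)}, e\rangle_H = a(u;e,\mathcal{A}^\ell_i(u)) + r^{(i)}_{\mathsf{adj}}(u;e)$, and the key observation is that $a(u;e,\mathcal{A}^\ell_i(u)) = r_{\mathsf{st}}(u;\mathcal{A}^\ell_i(u)) = 0$ by the state residual identity together with Galerkin orthogonality, since the reduced adjoint lies in $V^\ell$. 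Only the term $r^{(i)}_{\mathsf{adj}}(u;e)$ survives; bounding it by $\|r^{(i)}_{\mathsf{adj}}(u;\cdot)\|_{V'}\Delta_{\mathsf{st}}(u)$ and the quadratic remainder by $\sigma_\Omega^{(i)}\Delta_{\mathsf{st}}(u)^2$ (absorbing the factor $\tfrac{1}{2}$ into the looser stated constant) furnishes $\Delta_{\Jhat_i^\ell}(u)$.

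Finally, for the gradient I would insert the representation from Corollary~\ref{Corollary:EllipticModelProblem:AdjointRepresentationHessian}; the penalty terms cancel and the difference equals $-\big(\partial_u a(u;\mathcal{S}(u),\mathcal{A}_i(u)) - \partial_u a(u;\mathcal{S}^\ell(u),\mathcal{A}^\ell_i(u))\big)$. Writing $\mathcal{S}(u) = \mathcal{S}^\ell(u)+e$ and $\mathcal{A}_i(u) = \mathcal{A}^\ell_i(u)+e_i$ and using that $\partial_u a(u;\cdot,\cdot)$ is bilinear in its two function slots, the difference expands into exactly the three contributions $\partial_u a(u;e,\mathcal{A}^\ell_i(u))$, $\partial_u a(u;\mathcal{S}^\ell(u),e_i)$ and the cross term $\partial_u a(u;e,e_i)$. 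Estimating each by $\left\Vert \partial_u a(u;\cdot,\cdot)\right\Vert$ times the product of the $V$-norms of its arguments and inserting $\|e\|_V \le \Delta_{\mathsf{st}}(u)$ and $\|e_i\|_V \le \Delta^{(i)}_{\mathsf{adj}}(u)$ reproduces the three-term expression $\Delta_{\nabla\Jhat_i^\ell}(u)$. The single genuinely delicate point throughout is the cancellation $r_{\mathsf{st}}(u;\mathcal{A}^\ell_i(u)) = 0$ in the cost-functional step: recognising that the reduced adjoint lies in $V^\ell$ is exactly what upgrades the naive $\mathcal{O}(\|e\|_V)$ bound to the superconvergent product $\Delta_{\mathsf{st}}(u)\|r^{(i)}_{\mathsf{adj}}(u;\cdot)\|_{V'}$; everything else is routine bookkeeping with coercivity and the boundedness of the trilinear form $\partial_u a$.
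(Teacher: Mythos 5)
Your proof is correct and is essentially the standard argument behind the references the paper cites in lieu of an inline proof: the coercivity--residual bounds for the state and adjoint are those in \cite{Haa17}, and your treatment of the cost and gradient --- the Galerkin-orthogonality cancellation $r_{\mathsf{st}}(u;\mathcal{A}^\ell_i(u))=0$ together with the bilinear expansion of $\partial_u a$ into the three error terms --- is exactly the mechanism of \cite[Proposition 2.5]{Kei21}. The only point worth flagging is that your key cancellation requires the reduced adjoint to lie in the \emph{same} RB space $V^\ell$ as the state, which the paper explicitly assumes (one common space, hence no dual-correction term), and that your bound even retains the factor $\tfrac{1}{2}$ on $\sigma_\Omega^{(i)}\Delta_{\mathsf{st}}(u)^2$, i.e.\ it is marginally sharper than the stated estimate.
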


\begin{proof}
	A proof of the a-posteriori error estimates for the state and adjoint can be found in \cite{Haa17}. For the cost function and the gradient, we refer to \cite[Proposition 2.5]{Kei21}.\hfill\quad
\end{proof}

Note that we only need the reduced-order state and adjoint state to evaluate the a-posteriori error estimates. For our example, the computation of the coercivity constant $\alpha(u)$ is cheap, see Lemma~\ref{Lemma:EllipticModelProblem:PropertiesOfBilinearForm}. In more general examples, this might not be the case. Thus, one often uses a quickly computable lower bound $\alpha_{\mathsf{LB}}(u)$ instead. Possible methods for computing such a lower bound are, e.g., the min-theta approach (cf. \cite{Haa17}) or the Successive Constraint Method (SCM) (cf. \cite{Roz08}). Note finally that the computation of the terms $\| r_{\mathsf{st}}(u;\cdot)\|_{V'}$ and $\| r^{(i)}_{\mathsf{adj}}(u;\cdot) \|_{V'}$ is not possible in an infinite-dimensional setting. Even after discretization with the FE method, the cost of computing such a term depends on the dimension of the full-order model, which contradicts the request of having a computationally cheap estimate. However, in our case, the parameter-separability of the bilinear form $a(u;\cdot\,,\cdot)$ can be exploited to preassemble certain quantities in such a way that the computational cost for evaluating $\|r_{\mathsf{st}}(u;\cdot) \|_{V'}$ and $\| r^{(i)}_{\mathsf{adj}}(u;\cdot) \|_{V'}$ only depends on the dimension of the RB space; see, e.g., \cite{Roz08}. Finally, we apply the RB method to \eqref{Equation:EllipticModelProblem:MPOP}: for a given RB space $V^\ell$ the reduced-order MPPOP reads
\begin{align}
\min \Jhat^\ell(u) =\big(\Jhat^\ell_1 (u),\ldots,\Jhat^\ell_k (u)\big)^T\quad\text{s.t.}\quad u\in\Uad.
\tag{$\textbf{MPPOP}^\ell$} \label{Equation:MOR:ReducedOrderMPOP}
\end{align}
For an arbitrary reference point $z \in \R^k$ and target direction $r\in\mathbb R^k$, the reduced-order PS problem reads
\begin{equation}
\min_{(u,t)}t\quad\text{s.t.}\quad(t,u)\in\mathbb R\times\Uad\text{ and }\Jhat^\ell_i(u) - z_i \leq t, \quad i = 1,\ldots,k.
\label{Equation:MOR:RB:ROMPascolettiSerafiniScalarization}
\tag{$\textbf{P}^{\mathsf{PS},\ell}_{z,r}$}
\end{equation}
One could then outline an algorithm similar to Algorithm~\ref{Algorithm:PSM:HierarchicalAlgorithm:Numerical} by using an offline/online splitting. Because of the above-mentioned disadvantages, we focus on combining the PSP+ with the TR-RB method from \cite{Ban22} and extend it with respect to the method in \cite{Ban21}. The TR method introduces new aspects to the RB implementation, such as the adaptive construction of the RB space; see next section for further details.

\section{The TR-RB method}
\label{Section:4}

We briefly introduce the method from \cite{Ban22} and clarify how to apply this in combination with the PSM. In Section~\ref{Section:TRRB:ReduceNumberBasis} we highlight our extension to this method and how this can reduce the computational time. The basic idea of a TR method is to compute a first-order critical point of a costly optimization problem by iteratively solving some cheap-to-solve approximations in local regions of the admissible space, where these model approximations can be trusted (i.e. are accurate enough). In such a way, one can derive a global method, which converges in a finite number of steps. For each outer iteration $j\geq 0$ of the TR method, the cheap approximation of the objective is generally indicated by $m^{(j)}$ and the trust regions are described by a radius $\delta^{(j)}$. To simplify the exposition, let us stick with the case $\Us = \R^m\times\R$, as in Section~\ref{Section:PDE:model}. The TR method solves then, for each $j\geq 0$, the following constrained optimization sub-problems
\begin{equation}
\label{Equation:TRsubproblemgeneral}
\min_{v\in \Us} m^{(j)}(v) \quad\text{s.t.}\quad \|v\|_2\leq \delta^{(j)}, \tilde{u}:= u^{(j)}+v\in\Uad.  
\end{equation}  
Under suitable assumptions, problem \eqref{Equation:TRsubproblemgeneral} admits a unique solution $\bar v^{(j)}$, which is used to compute the next outer iteration $u^{(j+1)}= u^{(j)}+\bar v^{(j)}$. To further simplify the presentation of the algorithm in \cite{Ban22}, let us present it for a general cost functional $\JJ$. Later in this section we will give more details about its application to the MPPOP and PSM. The TR-RB version of problem \eqref{Equation:TRsubproblemgeneral} is
\begin{equation}
\label{Equation:TRsubproblem}
\min_{\tilde{u}\in\Uad} \JJ^{\ell,(j)}(\tilde{u}) \quad\text{s.t.}\quad q^{(j)}:= \frac{\Delta_{\JJ^{\ell,(j)}}(\tilde{u})}{\JJ^{\ell,(j)}(\tilde{u})}\leq \delta^{(j)},
\end{equation}
where $\Delta_{\JJ^{\ell,(j)}}(\tilde{u})$ is an estimate for the error $|\JJ(\tilde{u})-\JJ^{\ell,(j)}(\tilde{u})|$. Looking at \eqref{Equation:TRsubproblem}, one clearly sees that the role of the model function $m^{(j)}$ is played by the reduced-order model cost functional. This is perfectly in line with the TR spirit of having a cheap-to-solve approximation of the original optimization problem. The trust regions are defined instead through the RB error estimator, which is in fact the way one should use to check the quality of the approximation. In \cite{Kei21} also the importance of introducing a correction term on the RB level is discussed to improve the performance of the method. We point out that this only has to be done if one chooses two separate RB spaces for state and adjoint equations (see also \cite{Ban22}). This will not be the case for our application.
In Algorithm~\ref{Algorithm:MOR:TRRB}, we report the method from \cite{Ban22}. In what follows, we guide the reader through the features of the algorithm. At first, we need to initizialize the reduced-order model at the initial guess $u^{(0)}$. This means computing $\mathcal{S}(u^{(0)})$ and $\mathcal{A}_i(u^{(0)})$ for $i=1,\ldots,k$ and generating the RB space $V^{\ell,(0)}$ as their span. Similarly, updating the RB space $V^{\ell,(j)}$ at the point $u^{(j+1)}$ means computing the full-order quantities $\mathcal{S}(u^{(j+1)})$ and $\mathcal{A}_i(u^{(j+1)})$ for $i=1,\ldots,k$ and adding them to the RB space by a Gram-Schmidt orthonormalization. 
\begin{algorithm}[btp]
	\caption{TR-RB algorithm
		\label{Algorithm:MOR:TRRB}}
	\begin{algorithmic}[1]
		\STATE Initialize the reduced-order model at $u^{(0)}$, set $j=0$ and \textsf{Loop\_flag}$=$\textsf{True}; 
		\WHILE{
			{\normalfont\textsf{Loop\_flag}}}
		\STATE Compute the AGC point $u^{(j)}_\mathsf{AGC}$ \label{ComputeAGC};
		\STATE Compute $u^{(j+1)}$ as solution of \eqref{Equation:TRsubproblem} with stopping criteria \eqref{Equation:MOR:TRRB:TerminationCond}\label{TRRB-optstep};
		\IF{\label{Suff_condition_TRRB}$\JJ^{\ell,(j)}(u^{(j+1)})+\Delta_{\JJ^{\ell,(j)}}(u^{(j+1)})<\JJ^{\ell,(j)}(u^{(j)}_\text{\rm{AGC}})$} 
		\STATE Accept $u^{(j+1)}$, set $\delta^{(j+1)}=\delta^{(j)}$, compute $\varrho^{(j)}$ and $g(u^{(j+1)})$\label{AcceptingIterateSufficientCondition};
		\IF { $g(u^{(j+1)}) \leq \tau_{\mathsf{FOC}}$ } 
		\STATE	Set \textsf{Loop\_flag}$=$\textsf{False};
		\ELSE
		\IF {$\varrho^{(j)}\geq\eta_\varrho$}  
		\STATE	Enlarge the TR radius $\delta^{(j+1)} = \beta_1^{-1}\delta^{(j)}$;	
		\ENDIF
		\IF {{\normalfont\textbf{not}} {\normalfont\textsf{Skip\_enrichment\_flag}$(j)$}}
		
		\STATE Update the RB model at $u^{(j+1)}$ 
		\label{UpdateRBModelSufficientCondition};
		\ENDIF
		\ENDIF
		
		\ELSIF  {\label{Nec_condition_TRRB}$\JJ^{\ell,(j)}(u^{(j+1)})-\Delta_{\JJ^{\ell,(j)}}(u^{(j+1)})> \JJ^{\ell,(j)}(u^{(j)}_\text{\rm{AGC}})$} 
		\IF { $\beta_1\delta^{(j)} \leq \delta_{\text{\rm{min}}}$ \textbf{ or } \normalfont\textsf{Skip\_enrichment\_flag}$(j-1)$ \label{forced_enrichment}} 
		\STATE Update the RB model at $u^{(j+1)}$; 
		\ENDIF
		\STATE Reject $u^{(j+1)}$, shrink the radius $\delta^{(j+1)} = \beta_1\delta^{(j)}$ and go to \ref{TRRB-optstep}; 
		
		\ELSE 
		\STATE Compute $\JJ(u^{(j+1)})$, $g(u^{(j+1)})$, 
		$\varrho^{(j)}$ 
		and set $\delta^{(j+1)} = \beta_1^{-1}\delta^{(j)}$; 
		\IF { $g(u^{(j+1)})\leq \tau_{\mathsf{FOC}}$ } 
		\STATE Set \textsf{Loop\_flag}$=$\textsf{False};
		\ELSE 
		\IF { \label{AcceptingIterateNoUpdateByExactComputation_condition}{\normalfont\textsf{Skip\_enrichment\_flag}$(j)$} \textbf{ and } $\varrho^{(j)}\geq \eta_\varrho$}
		\STATE Accept $u^{(j+1)}$\label{AcceptingIterateNoUpdateByExactComputation};
		\ELSIF {$\JJ(u^{(j+1)}) \leq \JJ^{\ell,(j)}(u^{(j)}_\text{\rm{AGC}})$} 
		\STATE Accept $u^{(j+1)}$ and update the RB model 
		\label{AcceptingIterateAndUpdateByExactComputation};	
		\IF {$\varrho^{(j)}<\eta_\varrho$} 
		\STATE Set $\delta^{(j+1)}=\delta^{(j)}$;
		\ENDIF	
		
		\ELSE
		\IF { $\beta_1\delta^{(j)} \leq \delta_{\text{\rm{min}}}$ \textbf{ or } \normalfont\textsf{Skip\_enrichment\_flag}$(j-1)$ \label{forced_enrichment_2}} 
		\STATE Update the RB model at $u^{(j+1)}$; 
		\ENDIF
		\STATE Reject $u^{(j+1)}$, set $\delta^{(j+1)} = \beta_1\delta^{(j)}$ and go to \ref{TRRB-optstep};		
		\ENDIF
		\ENDIF
		\ENDIF
		\STATE Set $j=j+1$;		
		\ENDWHILE
	\end{algorithmic}
\end{algorithm} 
In Line~\ref{ComputeAGC} of Algorithm~\ref{Algorithm:MOR:TRRB}, it is required to compute the so-called approximated generalized Cauchy (AGC) point. We report here its definition according to \cite{Yue13,Kei21}.
\begin{Definition} \label{Definition:MOR:TRRB:CauchyPoint}
	Let $\kappa \in (0,1)$ and $\kappa_{\mathsf{arm}} \in (0,1)$ be backtracking parameters. For the current iterate $u^{(j)}$ define $d^{(j)} := \nabla \mathcal{J}^{\ell,(j)}(u^{(j)})$. Let $\alpha \in \N$ be the smallest number for which the two conditions
	\begin{align}
	\mathcal{J}^{\ell,(j)} \big( P_{\Uad}(u^{(j)} - \kappa^\alpha d^{(j)}) \big) - \mathcal{J}^{\ell,(j)} (u^{(j)}) & \leq - \frac{\kappa_{\mathsf{arm}}}{\kappa^\alpha} \| P_{\Uad}(u^{(j)} - \kappa^\alpha d^{(j)}) - u^{(j)} \|_\Us^2, \label{CauchyPoint1} \\
	q^{(j)}(P_{\Uad}(u^{(j)} - \kappa^\alpha d^{(j)})) & \leq \delta^{(j)}
	\end{align}
	are satisfied, where $P_{\Uad} \colon \Us \to \Uad$ is the canonical projection onto the closed and convex set $\Uad$. Then we define the \textup{AGC point} as $u^{(j)}_{\mathsf{AGC}} := P_{\Uad}(u^{(j)} - \kappa^\alpha d^{(j)})$.
\end{Definition} 
The TR-RB subproblem \eqref{Equation:TRsubproblem} is then solved in Line~\ref{TRRB-optstep} using a projected Newton-CG algorithm with the AGC point as a warm start and the following termination criteria
\begin{equation}
\| u - P_{\Uad} ( u - \nabla \mathcal{J}^{\ell,(j)}(u) ) \|_\Us \leq \tau_{\mathsf{sub}}, \quad \beta_{\mathsf{bound}} \delta^{(j)} \leq q^{(j)}(u) \leq \delta^{(j)}. \label{Equation:MOR:TRRB:TerminationCond}
\end{equation}
The first condition in \eqref{Equation:MOR:TRRB:TerminationCond} is the standard first-order criticality condition with tolerance $\tau_{\mathsf{sub}}\in(0,1)$ and the second one was already introduced in \cite{Qia17} to avoid too many iterations close to the TR boundary, which is generally an area where we are already starting to trust the model function less. The parameter $\beta_{\mathsf{bound}}$ is usually chosen to be close to one exactly for this purpose.

An important aspect of TR methods is the decision to accept or reject the step $u^{(j+1)}$. Generally, one asks for the so-called sufficient decrease condition $\JJ^{\ell,(j+1)}(u^{(j+1)})\leq \JJ^{\ell,(j)}(u^{(j)}_\mathsf{AGC})$; cf. \cite{Yue13}. Note that this condition requires to update the RB space before being sure that the step will be accepted. If it is rejected, then we performed a costly update without the possibility of exploiting it. Because of this fact, \cite{Qia17} proposed a sufficient (Line~\ref{Suff_condition_TRRB}) and a necessary (Line~\ref{Nec_condition_TRRB}) condition for the sufficient decrease condition. In \cite{Kei21} it is also noted that the full-order quantities $\JJ(u^{(j+1)})$ and $\nabla\JJ(u^{(j+1)})$ are cheaply available after updating the RB space. Additionally, \cite{Ban22} introduced the possibility of skipping a redundant enrichment, which is particularly useful at the late stage of the method, where we are close to the optimum. This will prevent the dimension of the RB space from growing too fast, so that the cheap-to-solve property is preserved. The three conditions to be checked in order to decide whether to skip the update of the RB space are contained in the following skipping parameter
\begin{align*}
\normalfont\textsf{Skip\_enrichment\_flag} & (j) := \big( q^{(j)}(u^{(j+1)}) \leq \beta_q \delta^{(j+1)} \big) \quad \mathsf{and} \nonumber \\
& \hspace*{0.98cm} \left( \frac{\big| g(u^{(j+1)}) - g^{\ell,(j)}(u^{(j+1)}) \big|}{g^{\ell,(j)}(u^{(j+1)})} \leq \tau_g \right) \quad \mathsf{and} \nonumber \\
& \left( \frac{\big\| \nabla \mathcal{J}^{\ell,(j)}(u^{(j+1)}) - \nabla \mathcal{J}(u^{(j+1)}) \big\|_\Us}{\big\| \nabla \mathcal{J}^{\ell,(j)}(u^{(j+1)}) \big\|_\Us} \leq  \min \{ \tau_{\mathsf{grad}} , \beta_{\mathsf{grad}} \delta^{(j+1)} \} \right).
\end{align*}    
where $\beta_q,\beta_{\mathsf{grad}},\tau_g,\tau_{\mathsf{grad}}\in(0,1)$ are given parameters and
\begin{align*}
g(u) := \big\| u - P_{\Uad} ( u - \nabla \mathcal{J}(u) ) \big\|_\Us, \quad g^{\ell,(j)}(u) := \big\| u - P_{\Uad} ( u - \nabla \mathcal{J}^{\ell,(j)}(u) ) \big\|_\Us.
\end{align*}
Note also that $g(u)=0$ is nothing else than the standard first-order condition for optimization problems with constraints on the parameter set. This is the reason why Algorithm~\ref{Algorithm:MOR:TRRB} terminates when $g(u^{(j+1)})< \tau_{\mathsf{FOC}}$ holds with $\tau_{\mathsf{FOC}}\in(0,1)$. For more details on the skipping condition, we refer to \cite{Ban22}. Typically, TR methods also have the option of shrinking (enlarging) the TR radius $\delta^{(j)}$ with some factor $\beta_1\in(0,1)$ ($\beta_1^{-1}>1$, respectively). In the case of Algorithm~\ref{Algorithm:MOR:TRRB}, we shrink the radius if a point is rejected. We also compute the ratio
\begin{align*}
\varrho^{(j)} := \frac{\mathcal{J}(u^{(j)}) - \mathcal{J}(u^{(j+1)})}{\mathcal{J}^{\ell,(j)}(u^{(j)}) - \mathcal{J}^{\ell,(j)}(u^{(j+1)})}.
\end{align*}
If this ratio is greater than a parameter $\eta_\varrho\in[0.75,1]$, then the radius is enlarged. Algorithm~\ref{Algorithm:MOR:TRRB} is proved to be convergent given some technical assumptions on the problem. We summarize everything in the following theorem (cf. \cite{Ban22})
\begin{Theorem}
	\label{Theorem:TRRB:convergence}
	Suppose that $\Uad = [u^a,u^b] \subset \R^P$ for some $u^a,u^b \in \R^P$ with $u^a \leq u^b$. Assume that $\mathcal{J}$ and $\mathcal{J}^{\ell,(j)}$ ($j \in \N$) are strictly positive, $\mathcal{J}$ is continuously Fr\'echet differentiable and $\mathcal{J}^{\ell,(j)}$ is even twice continuously Fr\'echet differentiable for all $j \in \N$. Moreover, $\nabla \mathcal{J}^{\ell,(j)}$ is uniformly Lipschitz-continuous with respect to $j$. Suppose that there is $\delta_{\mathsf{min}} > 0$ such that for every $j \in \N$ there exists a TR radius $\delta^{(j)} \geq \delta_{\mathsf{min}}$, for which there is a solution $u^{(j+1)}$ of the TR-RB subproblem \eqref{Equation:TRsubproblem} which is accepted by {\rm Algorithm~\ref{Algorithm:MOR:TRRB}}. Assume that the family of functions $(q^{(j)})_{j \in \N}$ is uniformly continuous w.r.t. the parameter $u$ and the index $j$. Then every accumulation point $\ubar$ of the sequence of iterates $(u^{(j)})_{j \in \N}$ is a first-order critical point for the full-order optimization problem, i.e., it holds
	\[
	\left\Vert \ubar - P_{\Uad} \left( \ubar - \nabla \mathcal{J}(\ubar) \right) \right\Vert_\Us = 0.
	\]
	In particular, {\rm Algorithm~\ref{Algorithm:MOR:TRRB}} terminates after finitely many steps.	
\end{Theorem}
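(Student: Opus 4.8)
The plan is to follow the classical projected trust-region convergence scheme in the Cauchy-decrease form of \cite{Yue13,Kei21,Ban22}, adapted to the facts that the surrogate $\JJ^{\ell,(j)}$ changes with $j$ and that the trust region is measured through the relative error estimator $q^{(j)}$. First I would show that the true objective values are monotonically non-increasing along the accepted iterates. This requires tracing the acceptance branches of Algorithm~\ref{Algorithm:MOR:TRRB}: in the sufficient-decrease branch (Line~\ref{Suff_condition_TRRB}) the surrogate inequality together with the error bound $\Delta_{\JJ^{\ell,(j)}}$ gives $\JJ(u^{(j+1)}) \le \JJ^{\ell,(j)}(u^{(j)}_{\mathsf{AGC}})$, and since $u^{(j)}_{\mathsf{AGC}}$ decreases the surrogate relative to $u^{(j)}$ one obtains $\JJ(u^{(j+1)}) \le \JJ(u^{(j)})$; in the exact-evaluation branches (Lines~\ref{AcceptingIterateNoUpdateByExactComputation} and \ref{AcceptingIterateAndUpdateByExactComputation}) the decrease is checked directly via $\JJ(u^{(j+1)})$ or via $\varrho^{(j)} \ge \eta_\varrho$. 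Because $\JJ$ is strictly positive, the monotone sequence $(\JJ(u^{(j)}))_j$ is bounded below and hence converges, so the consecutive decreases $\JJ(u^{(j)}) - \JJ(u^{(j+1)})$ tend to zero.

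The quantitative heart of the argument is the AGC (fraction-of-Cauchy) decrease estimate. Exploiting the Armijo backtracking condition \eqref{CauchyPoint1} in Definition~\ref{Definition:MOR:TRRB:CauchyPoint}, I would derive a lower bound of the type
\[
\JJ^{\ell,(j)}(u^{(j)}) - \JJ^{\ell,(j)}(u^{(j)}_{\mathsf{AGC}}) \ge c\, g^{\ell,(j)}(u^{(j)})\,\min\big\{ g^{\ell,(j)}(u^{(j)}),\, \delta^{(j)} \big\},
\]
tying the guaranteed surrogate decrease to the reduced criticality measure $g^{\ell,(j)}(u^{(j)})$. The constant $c>0$ is uniform in $j$ thanks to the uniform Lipschitz continuity of $\nabla\JJ^{\ell,(j)}$ and the fixed backtracking parameters $\kappa,\kappa_{\mathsf{arm}}$. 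Since every accepted step decreases the surrogate at least as much as the AGC point and, up to the estimator $\Delta_{\JJ^{\ell,(j)}}$, the true decrease dominates the surrogate decrease, the vanishing of the consecutive decreases forces $g^{\ell,(j)}(u^{(j)}) \to 0$. Here the hypothesis $\delta^{(j)} \ge \delta_{\mathsf{min}} > 0$ is essential, as it prevents the $\min$-factor above from degenerating.

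Next I would transfer criticality from the surrogate to the full model. On the accepted iterates the trust-region constraint $q^{(j)}(u^{(j+1)}) \le \delta^{(j)}$ together with the relative-error definition of $q^{(j)}$ controls the gradient mismatch $\|\nabla\JJ^{\ell,(j)}(u^{(j)}) - \nabla\JJ(u^{(j)})\|_\Us$; combined with the uniform continuity assumption on $(q^{(j)})_j$ this yields $g(u^{(j)}) \to 0$. Passing to a convergent subsequence $u^{(j_n)} \to \ubar$ and using continuity of $\nabla\JJ$ and nonexpansiveness of the projection $P_{\Uad}$, I would conclude
\[
\big\| \ubar - P_{\Uad}(\ubar - \nabla\JJ(\ubar)) \big\|_\Us = \lim_{n\to\infty} g(u^{(j_n)}) = 0,
\]
which is the asserted first-order criticality. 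Finite termination is then immediate: since $g(u^{(j)}) \to 0$ and the stopping test in Algorithm~\ref{Algorithm:MOR:TRRB} is $g(u^{(j+1)}) \le \tau_{\mathsf{FOC}}$ with $\tau_{\mathsf{FOC}} > 0$, the test must be satisfied after finitely many iterations.

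The main obstacle I anticipate is the bookkeeping across the numerous acceptance/rejection branches of Algorithm~\ref{Algorithm:MOR:TRRB}, in particular the enrichment-skipping branches where the RB model is \emph{not} updated at $u^{(j+1)}$: there one cannot invoke the ``exactness after enrichment'' property, so the gradient consistency needed for $g(u^{(j)}) \to 0$ must instead be extracted from the third clause of \textsf{Skip\_enrichment\_flag}$(j)$, which bounds precisely the relative reduced-gradient error. Keeping the constants in the Cauchy estimate and in the error-transfer step uniform in $j$ — so that no quantity collapses as the surrogate $\JJ^{\ell,(j)}$ varies — is the delicate technical point, and it is exactly where the uniform Lipschitz and uniform continuity hypotheses together with the lower bound $\delta_{\mathsf{min}}$ on the radius are consumed.
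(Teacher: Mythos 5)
First, note that the paper itself does not prove Theorem~\ref{Theorem:TRRB:convergence}: it imports the statement from \cite{Ban22} (``cf.~\cite{Ban22}''), so your sketch must be measured against the proof given there (and in \cite{Kei21,Qia17,Yue13}). Your skeleton --- AGC/Cauchy sufficient decrease with a constant uniform in $j$, telescoping of the true objective values, transfer of criticality from $g^{\ell,(j)}$ to $g$, finite termination from $\tau_{\mathsf{FOC}}>0$ --- is indeed the skeleton of that proof, and your Cauchy estimate with uniformity via the Lipschitz hypothesis (in the paper, via Lemma~\ref{Lemma:MOR:TRRB:CostFunctionsBoundedIndependentOfRBSpace}) is sound. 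The genuine gap is in your transfer step: you claim that the trust-region constraint $q^{(j)}(u^{(j+1)}) \leq \delta^{(j)}$ ``controls the gradient mismatch'' $\| \nabla \JJ^{\ell,(j)}(u^{(j)}) - \nabla \JJ(u^{(j)}) \|_\Us$. It does not. By \eqref{Equation:TRsubproblem}, $q^{(j)}$ is a relative a-posteriori bound on the error in the \emph{value} of the cost functional only, and smallness of the value error implies nothing about the gradient error --- the paper itself stresses, when motivating techniques T2a/T2b, that gradient estimators are a separate object that can overestimate badly. The mechanism actually used in \cite{Ban22,Kei21} is the reproduction (exactness) property of the RB space: after enrichment at $u^{(j+1)}$, the snapshots $\mathcal{S}(u^{(j+1)})$ and $\mathcal{A}_i(u^{(j+1)})$ lie in $V^{\ell,(j+1)}$, so by coercivity the Galerkin solutions coincide with the truth there, and hence both $\JJ^{\ell,(j+1)}$ and $\nabla \JJ^{\ell,(j+1)}$ are \emph{exact} at $u^{(j+1)}$; at iterations where enrichment is skipped, the second and third clauses of \textsf{Skip\_enrichment\_flag} supply precisely the controlled relative errors $| g - g^{\ell,(j)} | / g^{\ell,(j)} \leq \tau_g$ and the gradient bound you need. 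You flag this as an anticipated ``obstacle,'' but your proof as written routes the transfer through $q^{(j)}$, and that step fails.

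The same exactness issue infects your monotonicity chain: from $\JJ(u^{(j+1)}) < \JJ^{\ell,(j)}(u^{(j)}_{\mathsf{AGC}}) \leq \JJ^{\ell,(j)}(u^{(j)})$ you cannot conclude $\JJ(u^{(j+1)}) \leq \JJ(u^{(j)})$ without knowing $\JJ^{\ell,(j)}(u^{(j)}) = \JJ(u^{(j)})$ (enriched iterate) or a $\tau_g$-type controlled surrogate error (skipped iterate); neither positivity nor the $q^{(j)}$-constraint gives this. Relatedly, the uniform continuity of $(q^{(j)})_{j\in\N}$ is consumed elsewhere in the cited proof --- it guarantees that for small enough radii feasible points near $u^{(j)}$ exist, so the backtracking in Definition~\ref{Definition:MOR:TRRB:CauchyPoint} terminates and the hypothesis that some accepted $\delta^{(j)} \geq \delta_{\mathsf{min}}$ exists is actionable --- not to transfer gradient information. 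With two repairs --- (i) derive value- and gradient-exactness at enriched iterates from the Galerkin reproduction property and invoke the skip-flag clauses at skipped ones, and (ii) reassign the uniform-continuity hypothesis to the feasibility/acceptance mechanics --- your sketch becomes essentially the argument of \cite{Ban22}.
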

Although many of the assumptions in Theorem~\ref{Theorem:TRRB:convergence} are quite technical for the proof, one can show that they are reasonable in the case of the RB method; cf. \cite{Ban22}. 

\subsection{The TR-RB algorithm applied to the PS method}   

In this section we show how Algorithm~\ref{Algorithm:MOR:TRRB} can be applied to the PS method. To this end, we recall the following lemma from \cite{Ban21}.

\begin{Lemma} \label{Lemma:MOR:TRRB:CostFunctionsBoundedIndependentOfRBSpace}
	There are constants $C_J,C_{\nabla J},C_{\nabla^2 J} > 0$ such that for any $j \in \{1,\ldots,k\}$, any $u \in \Uad$ and any choice of the RB space $V^\ell$ it holds
	\begin{align*}
	\big| \Jhat_i^\ell(u) \big| \leq C_J, \quad \big\| \nabla \Jhat_i^\ell(u) \big\|_\Us \leq C_{\nabla J}, \quad
	\big\| \nabla^2 \Jhat_i^\ell(u) \big\|_{L(U)} \leq C_{\nabla^2 J}.
	\end{align*}
\end{Lemma}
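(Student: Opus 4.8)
The plan is to reduce all three bounds to uniform a-priori estimates for the reduced-order state $\mathcal{S}^\ell(u)$, the reduced-order adjoints $\mathcal{A}_i^\ell(u)$, and their first sensitivities, and to exploit three structural facts: that $\Uad$ is bounded, that the index set $\{1,\dots,k\}$ is finite (so one may take a maximum over $i$ and over the finitely many data $y_\Omega^{(i)}, u_d^{(i)}, \sigma_\Omega^{(i)}, \sigma_\Us^{(i)}$), and --- crucially --- that the coercivity estimate $a(u;\varphi,\varphi)\ge \alpha_{\mathsf{min}}\|\varphi\|_V^2$ from Remark~\ref{Remark:EllipticModelProblem:ExistenceAlphaMin} holds for all $\varphi\in V$ and hence on every subspace $V^\ell\subset V$ with the same constant $\alpha_{\mathsf{min}}>0$, independent of $u\in\Uad\subset\Ueq$ and of $\ell$.

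First I would bound the reduced state. Since $y^\ell:=\mathcal{S}^\ell(u)$ solves \eqref{Equation:Appendix:MOR:RB:EllipticModelProblem:RBWeakFormulation}, testing with $\psi=y^\ell\in V^\ell$ and using coercivity together with $|\mathcal{F}(y^\ell)|\le\|\mathcal{F}\|_{V'}\|y^\ell\|_V$ gives $\|\mathcal{S}^\ell(u)\|_V\le\|\mathcal{F}\|_{V'}/\alpha_{\mathsf{min}}=:C_\mathcal{S}$, a bound free of $u$ and $\ell$ (this is exactly the estimate \eqref{Equation:EllipticModelProblem:AprioriEstimate}, now read off on the subspace). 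The same argument applied to the reduced adjoint equation \eqref{Equation:Appendix:MOR:RB:EllipticModelProblem:RBAdjointEquation:WeakFormulation}, testing with $\psi=p^{(i),\ell}:=\mathcal{A}_i^\ell(u)$ and using the continuous embedding $V\hookrightarrow H$, yields $\|\mathcal{A}_i^\ell(u)\|_V\le C_{\mathcal{A}}$, with $C_\mathcal{A}$ depending only on $\alpha_{\mathsf{min}}$, the embedding constant, $\max_i\sigma_\Omega^{(i)}$, $C_\mathcal{S}$ and $\max_i\|y_\Omega^{(i)}\|_H$. With these two bounds the cost bound is immediate from the definition of $\Jhat_i^\ell$: the tracking term is controlled by $C_\mathcal{S}$ and $\|y_\Omega^{(i)}\|_H$, and the regularization term by $\sup_{u\in\Uad}\|u-u_d^{(i)}\|_\Us$, which is finite since $\Uad$ is bounded; maximizing over $i$ defines $C_J$.

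For the gradient I would use the reduced-order analogue of Corollary~\ref{Corollary:EllipticModelProblem:AdjointRepresentationHessian}, that is $\nabla\Jhat_i^\ell(u)=-\partial_u a(u;\mathcal{S}^\ell(u),\mathcal{A}_i^\ell(u))+\sigma_\Us^{(i)}(u-u_d^{(i)})$. By Remark~\ref{Remark:EllipticModelProblem:PartialDerivativeOfBilinearForm} the object $\partial_u a$ is independent of $u$ and satisfies $\|\partial_u a(u;\varphi,\psi)\|_{\Us'}\le C\|\varphi\|_V\|\psi\|_V$, so the first term is bounded by $C\,C_\mathcal{S}C_\mathcal{A}$ and the second again by boundedness of $\Uad$; a maximum over $i$ gives $C_{\nabla J}$. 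The Hessian requires, in addition, uniform bounds on the reduced sensitivities: $(\mathcal{S}^\ell)'(u)h$ solves the reduced sensitivity equation with right-hand side $-\partial_u a(u;\mathcal{S}^\ell(u),\cdot)h$, so testing with $(\mathcal{S}^\ell)'(u)h$ and invoking coercivity gives $\|(\mathcal{S}^\ell)'(u)\|_{L(\Us,V)}\le C\,C_\mathcal{S}/\alpha_{\mathsf{min}}$, and the entirely analogous estimate on the reduced adjoint sensitivity equation from Lemma~\ref{Lemma:EllipticModelProblem:AdjointSolutionOperatorDifferentiable} bounds $\|(\mathcal{A}_i^\ell)'(u)\|_{L(\Us,V)}$ uniformly. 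Substituting these into $\nabla^2\Jhat_i^\ell(u)h=-\partial_u a(u;(\mathcal{S}^\ell)'(u)h,\mathcal{A}_i^\ell(u))-\partial_u a(u;\mathcal{S}^\ell(u),(\mathcal{A}_i^\ell)'(u)h)+\sigma_\Us^{(i)}h$ and bounding each $\partial_u a$ factor as above yields $\|\nabla^2\Jhat_i^\ell(u)\|_{L(U)}\le C_{\nabla^2 J}$.

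The hard part is not any single estimate --- each is a one-line coercivity/Galerkin argument --- but making sure every constant is genuinely independent of the RB space $V^\ell$. This is secured by the single observation that coercivity with constant $\alpha_{\mathsf{min}}$ is inherited by every subspace $V^\ell\subset V$, so that the Galerkin-projected state, adjoint and sensitivity problems are all coercive with the same $\alpha_{\mathsf{min}}$ and the chain of a-priori bounds never sees the dimension or the particular choice of $V^\ell$. Independence of $u$ follows because $\alpha_{\mathsf{min}}$ and $\|\partial_u a\|$ do not depend on $u\in\Ueq\supset\Uad$ and $\Uad$ is bounded, while independence of $i$ is trivial since there are finitely many indices and one simply maximizes over them.
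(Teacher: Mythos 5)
Your proof is correct, and note that the paper itself gives no argument for this lemma --- it is merely recalled from the thesis \cite{Ban21} --- so there is no in-paper proof to diverge from; your reconstruction is exactly the standard one that the citation delegates. You correctly identify the one point on which everything hinges, namely that the coercivity bound $a(u;\varphi,\varphi)\ge\alpha_{\mathsf{min}}\|\varphi\|_V^2$ of Remark~\ref{Remark:EllipticModelProblem:ExistenceAlphaMin} is inherited by every subspace $V^\ell\subset V$ with a constant independent of $u\in\Uad$ and of $V^\ell$, after which testing the reduced state, adjoint and sensitivity equations with their own solutions, using $\|\partial_u a(u;\varphi,\psi)\|_{\Us'}\le C\|\varphi\|_V\|\psi\|_V$ (which is $u$-independent by linearity of $a$ in $u$, cf.\ Remark~\ref{Remark:EllipticModelProblem:PartialDerivativeOfBilinearForm}), the boundedness of $\Uad$, and finiteness of the index set yields all three constants.
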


Lemma~\ref{Lemma:MOR:TRRB:CostFunctionsBoundedIndependentOfRBSpace} immediately implies that the reduced-order gradient is uniformly Lipschitz-continuous with respect to $\ell$. We have to solve \eqref{Equation:PascolettiSerafiniScalarization}. We follow the approach in \cite{Ban21}, where the target direction $r = (1, . . . , 1)$ is chosen and an augmented Lagrangian method is used. Provided a penalty parameter $\mu>0$, the augmented Lagrangian for \eqref{Equation:PascolettiSerafiniScalarization} is
\begin{align}
\mathcal{L}_A((u,t,s),\lambda;\mu) := t + \sum_{i=1}^k \lambda_i c_i(u,t,s) + \frac{\mu}{2}\sum_{i=1}^k c_i(u,t,s)^2 \label{Equation:AugmentedLagrangian}
\end{align}
with $c_i(u,t,s) = \Jhat_i(u) - z_i - t+ s_i$. The idea is to iteratively solve the subproblems 
\begin{align}
\label{Equation:AugLagsubprob}
\min \mathcal{L}_A((u,t,s),\lambda;\mu) \quad \text{s.t.} \quad (u,t,s) \in \Uad \times \R \times \R_\geq^k 
\end{align}
approximately and then update the Lagrange multiplier $\lambda$ and the penalty parameter $\mu$ until the termination criteria
\begin{align}
\left\Vert c(u,t,s) \right\Vert_{\R^k} & < \tau_{\mathsf{EC}}, \label{Equation:Appendix:AugLagr:TerminationConditionEquConstr} \\
\big\| (u,t,s) - P_{\mathsf{ad}} \big( (u,t,s) - \nabla_{(u,t,s)} \mathcal{L}_A((u,t,s),\lambda;\mu) \big) \big\|_{\Us \times \R \times \R^k} & < \tau_{\mathsf{FOC}} \label{Equation:Appendix:AugLagr:TerminationConditionFOC}
\end{align}
are satisfied for some tolerances $\tau_{\mathsf{EC}}, \tau_{\mathsf{FOC}} \in (0,1)$, where $P_{\mathsf{ad}} \colon \Us \times \R \times \R^k \to \Uad \times \R \times \R_\geq^k$ is the canonical projection onto $\Uad \times \R \times \R_\geq^k$. For further details, we refer to \cite[Appendix B]{Ban21}. We want to combine then the augmented Lagrangian method with the TR-RB algorithm to solve problem \eqref{Equation:PascolettiSerafiniScalarization}. To do so, we apply Algorithm~\ref{Algorithm:MOR:TRRB} to solve each subproblem \eqref{Equation:AugLagsubprob}. We first define the reduced-order augmented Lagrangian 
\begin{align}
\mathcal{L}^{\ell}_A((u,t,s),\lambda;\mu) := t + \sum_{i=1}^k \lambda_i c_i^\ell(u,t,s)+ \frac{\mu}{2}\sum_{i=1}^k c_i^\ell(u,t,s)^2,
\end{align}
with $c_i^\ell(u,t,s)=\Jhat_i^\ell(u) - z_i - t+ s_i$, which leads to the reduced-order subproblem 
\begin{align}
\min \mathcal{L}^{\ell}_A((u,t,s),\lambda;\mu) \quad \text{s.t.} \quad (u,t,s) \in \Uad \times \R \times \R_\geq^k. \label{Equation:MOR:RB:PSM:AugmentedLagrangianSubproblem}
\end{align}
Note that in this case the admissible set $\Uad \times \R \times \R_\geq^k$ is unbounded, which collides with the first assumption of Theorem~\ref{Theorem:TRRB:convergence}. Nevertheless, \cite{Ban21} showed that the \eqref{Equation:PascolettiSerafiniScalarization} problem is also equivalent to
\begin{equation}
\min t \quad\text{s.t.}\quad (t,u) \in [t^{\mathsf{min}},t^{\mathsf{max}}]\times\Uad\text{ and }\Jhat(u) - z \leq t.
\label{Equation:MOR:TRRB:ModifiedPascolettiSerafiniScalarization}
\end{equation} 
There is still the problem that the admissible set for the slack variables $s$ is given by $[0,\infty)^k$. However, computing the partial derivative of the augmented Lagrangian $\mathcal{L}_A$ with respect to $s_i$, we obtain
\begin{align*}
\partial_{s_i} \mathcal{L}_A((u,t,s),\lambda;\mu) & = \lambda_i + \mu \left( \Jhat_i(u) - z_i - t + s_i \right)  \geq \lambda_i + \mu ( - z_i - t^{\mathsf{max}} + s_i).
\end{align*}
Thus, $\mathcal{L}_A$ is strictly monotonically increasing in $s_i$ for $s_i > - \lambda_i/\mu + z_i + t_{\mathsf{max}} =: s_i^{\mathsf{max}}$. Thus, given the Lagrange multiplier $\lambda$ and the penalty parameter $\mu$, we can restrict the slack variable $s_i$ to the interval $[0,s_i^{\mathsf{max}}]$. This will not cause any modification to the solvability and the solution of the augmented Lagrangian subproblem. By setting $\Xad := \Uad \times [t^{\mathsf{min}},t^{\mathsf{max}}] \times [0,s^{\mathsf{max}}]$, the equivalent formulation for the augmented Lagrangian subproblem corresponding to \eqref{Equation:MOR:TRRB:ModifiedPascolettiSerafiniScalarization} reads
\begin{align}
\min_{(u,t,s) \in \Xad} \mathcal{L}_A((u,t,s),\lambda;\mu). \label{Equation:MOR:TRRB:ModifiedLagrangianSubproblem}
\end{align}
Similarly, the reduced-order augmented Lagrangian subproblem is given by
\begin{align}
\min\mathcal{L}_A^\ell((u,t,s),\lambda;\mu)\quad\text{s.t.}\quad(u,t,s) \in \Xad. \label{Equation:MOR:TRRB:ModifiedRBLagrangianSubproblem}
\end{align}
Therefore, the goal is to apply Algorithm~\ref{Algorithm:MOR:TRRB} to solve the subproblem \eqref{Equation:MOR:TRRB:ModifiedLagrangianSubproblem}. To this end, we define $x=(u,t,s)\in\Us \times \R \times \R^k$, $\JJ(x) = \mathcal{L}_A(x,\lambda;\mu)$ and $\JJ^{\ell,(j)}(x) = \mathcal{L}_A^{\ell,(j)}(x,\lambda;\mu)$ for any reference point $z\in\R^k$, any Lagrange multiplier $\lambda\in\R^k_\geq$ and any penalty parameter $\mu>0$. Furthermore, using the a-posteriori estimates of the individual objectives (cf. Theorem~\ref{Theorem:Appendix:MOR:RB:AposterioriEstimate}), we have that
\begin{align*}
\big| \mathcal{J}(x) - \mathcal{J}^{\ell,(j)}(x) \big| \leq & \sum_{j=1}^k \left( \lambda_j + c \big| \Jhat^{\ell,(j)}_j(u) - z_j - t + s_j \big| \right) \Delta_{\Jhat_j^{\ell,(j)}}(u) \\
& + \sum_{j=1}^k \frac{c}{2} \left( \Delta_{\Jhat_j^{\ell,(j)}}(u) \right)^2 =: \Delta_{\mathcal{J}}^{\ell,(j)}(u)
\end{align*}
for all $u \in \Uad$, which can be used as a-posteriori error estimate in the TR-RB algorithm. According to Theorem~\ref{Theorem:TRRB:convergence}, we still need to show the strict positivity of the costs $\JJ$ and $\JJ^{\ell,(j)}$ and the uniform Lipschitz continuity of the gradient $\nabla \JJ^{\ell,(j)}$. For the first, we note that the objectives $\JJ$ and $\JJ^{\ell,(j)}$ are bounded from below by $C:= t^{\mathsf{min}} - \sum_{i=1}^k \lambda_i^2/(2\mu_i)$. Since $C$ depends only on fixed parameters of the optimization problems, we can add $C+1$ to the cost functions to obtain strict positivity. Obviously, this will not change the minimizers. The second property is a bit more technical and we prove it in the following lemma.
\begin{Lemma}
	Let the Lagrange multiplier $\lambda$ and the penalty parameter $\mu$ be given. Then the function $\JJ(\cdot):=\mathcal{L}_A(\cdot,\lambda;\mu)$ is twice continuously Fréchet-differentiable for all $j \in \N$ and the gradient $\nabla \mathcal{J}^{\ell,(j)}$ is uniformly Lipschitz continuous with respect to $j$.
\end{Lemma}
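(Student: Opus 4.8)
The plan is to derive both assertions from the explicit, polynomial structure of the augmented Lagrangian in the constraint functions $c_i^\ell$, combined with the $j$-independent bounds of Lemma~\ref{Lemma:MOR:TRRB:CostFunctionsBoundedIndependentOfRBSpace}.

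I would begin with the differentiability. Each reduced constraint $c_i^\ell(u,t,s) = \Jhat_i^\ell(u) - z_i - t + s_i$ is twice continuously Fréchet-differentiable on $\Us \times \R \times \R^k$: it is affine in $(t,s)$, and the reduced-order analogue of Corollary~\ref{Corollary:EllipticModelProblem:AdjointRepresentationHessian} guarantees that $u \mapsto \Jhat_i^\ell(u)$ is twice continuously Fréchet-differentiable. Since $\mathcal{L}_A^{\ell,(j)}$ is a quadratic polynomial in the $c_i^\ell$ plus the affine term $t$, the product and sum rules for Fréchet derivatives show that $\JJ^{\ell,(j)}$ is twice continuously Fréchet-differentiable; applying the identical reasoning to the full-order quantities yields the same property for $\JJ$.

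For the uniform Lipschitz estimate I would compute the Hessian of $\JJ^{\ell,(j)}$ explicitly. Abbreviating $a_i := \lambda_i + \mu c_i^\ell(u,t,s)$, differentiation of the gradient gives the $u$-block
\[
\nabla_{uu} \JJ^{\ell,(j)}(x) = \sum_{i=1}^k \Big( a_i \, \nabla^2 \Jhat_i^\ell(u) + \mu \, \nabla \Jhat_i^\ell(u) \big(\nabla \Jhat_i^\ell(u)\big)^T \Big),
\]
whereas the mixed blocks in $t$ and $s$ are either of the form $\pm \mu \, \nabla \Jhat_i^\ell(u)$ or constant multiples of $\mu$ (the pure $t$- and $s$-derivatives being $k\mu$, $-\mu$ and $\mu\delta_{ij}$). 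The key point, and the main obstacle, is to bound all of these \emph{uniformly in} $j$. This is exactly where the restriction to the bounded set $\Xad = \Uad \times [t^{\mathsf{min}},t^{\mathsf{max}}] \times [0,s^{\mathsf{max}}]$ is used: on $\Xad$ the components $t$ and $s$ lie in fixed bounded intervals, $z$ is fixed, and $|\Jhat_i^\ell(u)| \leq C_J$ by Lemma~\ref{Lemma:MOR:TRRB:CostFunctionsBoundedIndependentOfRBSpace}, so each $c_i^\ell$, and hence each coefficient $a_i$, is bounded by a constant that does not depend on the RB space $V^{\ell,(j)}$. Together with the bounds $\|\nabla \Jhat_i^\ell(u)\|_\Us \leq C_{\nabla J}$ and $\|\nabla^2 \Jhat_i^\ell(u)\|_{L(\Us)} \leq C_{\nabla^2 J}$ from the same lemma, this yields a constant $L > 0$, independent of $j$ and of $x \in \Xad$, with $\|\nabla^2 \JJ^{\ell,(j)}(x)\| \leq L$.

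Finally, since $\Xad$ is a product of intervals and of the convex set $\Uad$, it is convex, so the mean-value inequality upgrades the uniform Hessian bound to $\|\nabla \JJ^{\ell,(j)}(x_1) - \nabla \JJ^{\ell,(j)}(x_2)\| \leq L \|x_1 - x_2\|$ for all $x_1, x_2 \in \Xad$ and all $j \in \N$, which is the asserted uniform Lipschitz continuity of $\nabla \JJ^{\ell,(j)}$.
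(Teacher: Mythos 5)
Your proposal is correct and follows essentially the same route as the paper: establish twice continuous Fr\'echet-differentiability from that of the $\Jhat_i^\ell$ (via the reduced-order analogue of Corollary~\ref{Corollary:EllipticModelProblem:AdjointRepresentationHessian}), compute the Hessian of $\mathcal{L}_A^{\ell,(j)}$ explicitly, bound it uniformly in $j$ using Lemma~\ref{Lemma:MOR:TRRB:CostFunctionsBoundedIndependentOfRBSpace}, and conclude by the mean value theorem. If anything, you are slightly more careful than the paper, which asserts the Hessian bound ``independently of $(u,t,s)$'' without spelling out that the coefficients $\lambda_i + \mu c_i^{\ell}$ are only bounded because $(u,t,s)$ is restricted to the bounded, convex set $\Xad = \Uad \times [t^{\mathsf{min}},t^{\mathsf{max}}] \times [0,s^{\mathsf{max}}]$ --- a point you make explicit.
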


\begin{proof}
	Due to Corollary~\ref{Corollary:EllipticModelProblem:AdjointRepresentationHessian} the cost functions $\Jhat_1,\ldots,\Jhat_k$ are twice continuously Fréchet-differentiable. Thus, the function $(u,t,s) \mapsto \mathcal{L}_A((u,t,s),\lambda;\mu)$ is also twice continuously Fréchet-differentiable as a composition of twice continuously Fréchet-differentiable functions. Similarly, the reduced-order augmented Lagrangians $\mathcal{L}_A^{\ell,(j)}((\cdot\,,\cdot\,,\cdot),\lambda;\mu)$ are also twice continuously Fréchet-differentiable for all $j \in \N$. We have that
	\begin{align*}
	& \hspace*{3.5cm} \nabla^2 \mathcal{L}_A^{\ell,(j)}((u,t,s),\lambda;\mu) (h^u,h^t,h^s) = \\
	& \left( \begin{array}{c}
	\sum\limits_{j=1}^k \Big(\big( \lambda_j + \mu c_j^{\ell,(j)} \big) \nabla^2 \Jhat_j^{\ell,(j)}(u) h^u + \mu \big(d_j^{\ell,(j)} - h^t + h^s_j\big) \nabla \Jhat_j^{\ell,(j)}(u) \Big) \\
	k \mu h^t - \mu \sum\limits_{j=1}^k \big( d_j^{\ell,(j)} + h^s_j \big) \\
	\mu \big(d_1^{\ell,(j)} + h^s_1 - h^t \big) \\
	\vdots \\
	\mu \big(d_k^{\ell,(j)} + h^s_k - h^t \big)
	\end{array} \right)
	\end{align*}
	for any $h = (h^u,h^t,h^s) \in \Us \times \R \times \R^k$, where $c_j^{\ell,(j)} := \Jhat_j^{\ell,(j)}(u) - z_j - t + s_j$ and $d_j^{\ell,(j)} := \langle \nabla \Jhat^{\ell,(j)}(u),h^u \rangle_\Us$ for $j \in \{1,\ldots,k\}$. Using Lemma~\ref{Lemma:MOR:TRRB:CostFunctionsBoundedIndependentOfRBSpace}, we obtain that the Hessian matrix $\nabla^2 \mathcal{L}_A^{\ell,(j)}((u,t,s),\lambda;\mu)$ can be bounded independently of $(u,t,s)$ and $j$. Using the mean value theorem, we can conclude that the gradients $\nabla \mathcal{L}_A^{\ell,(j)}((\cdot,\cdot,\cdot),\lambda;\mu)$ are Lipschitz-continuous with constant $C_L$ uniformly in $j$.\hfill\quad
\end{proof}
As a consequence of Theorem~\ref{Theorem:TRRB:convergence}, we have that Algorithm~\ref{Algorithm:MOR:TRRB} applied to solve the augmented Lagrangian subproblem \eqref{Equation:MOR:TRRB:ModifiedLagrangianSubproblem} converges after finitely many steps to a first-order critical point of \eqref{Equation:MOR:TRRB:ModifiedLagrangianSubproblem}. 
\begin{Remark}
	\label{Remark:RBstrategies}
	Algorithm~{\em \ref{Algorithm:MOR:TRRB}} constructs and updates the RB space during the optimization procedure. In the case of the PS method, we are free to choose what to do for the space constructed during the TR-RB procedure. For example, we can use it for the next augmented Lagrangian subproblem (and also for the next reference point). We explored different ideas (see also \cite{Ban21}), but we report here only the two most interesting and efficient ones:
	\begin{enumerate}
		\item [\em 1)] Use one common RB space for all the subproblems and reference points, i.e. use a single space $V^\ell$ for solving the MOP. This strategy acquires efficiency in terms of reconstructing the full-order parameter space during the iteration. Therefore, thanks to the possibility of skipping an enrichment (which is the costly part in Algorithm~{\em\ref{Algorithm:MOR:TRRB}}), we expect more and more speed-up, together with accuracy, as the algorithm proceeds. 
		\item [\em 2)] Use multiple (local) RB spaces. This idea is already exploited by {\em\cite{Ban21,Bee18,Haa11}}. In this case, we do not use the previously obtained RB space for the next minimization problem. We generate instead $k$ initial spaces $V^\ell_1, \ldots, V^\ell_k$, resulting from the minimization\footnote{Note that this procedure does not require extra computational cost, since we need to solve these problems for the hierarchical PSM anyway} of the objectives $\Jhat_1,\dots,\Jhat_k$. Then at the beginning of every PS problem, we can decide to use the space $V^\ell_i$ for which $q^{(0)}(u^{(0)})<\beta_q\delta^{(0)}$ and $\dim V^\ell_i\leq \ell_\mathsf{max}$, with $\ell_\mathsf{max}\in\N$ being a predefined maximal number of basis functions. If several spaces satisfy these conditions then we select the one for which the value $q^{(0)}(u^{(0)})$ is the smallest. If instead there is no space fulfilling these conditions, we initialize a new space $V^\ell_{k+1}$ by using the full-order quantities $\mathcal{S}(u^{(0)})$ and $\mathcal{A}_i(u^{0})$ for $i=1,\ldots,k$. 
	\end{enumerate}
	Although these two techniques are already efficient, we noticed that there is a common problem: the number of RB basis functions might grow too fast and prevent a good speed-up for the solution. In particular, this is the case for the first strategy. To fix this issue, we propose different strategies to remove basis functions from $V^\ell$ in Section~{\em\ref{Section:TRRB:ReduceNumberBasis}}. This approach was not considered in {\em\cite{Ban21,Ban22,Kei21,Qia17}} and to our knowledge it has not been addressed in the literature yet.
	In reduced-order optimization, instead, this is meaningful, since the reduced-order model might grow too fast; see, e.g., {\em\cite{Mec19}}, in the case of proper orthogonal decomposition.
\end{Remark}

\subsection{How to reduce the number of basis functions}
\label{Section:TRRB:ReduceNumberBasis}

We point out that what is described in this section can also generally be applied to Algorithm~\ref{Algorithm:MOR:TRRB} from \cite{Ban22} without any relation to the PS method. Therefore, we use again the general notation $\JJ$ for the cost, as it was done in the beginning of this section. The methodology to remove a basis function comes from the observation that some basis elements might not be used during the optimization process. Suppose that we start from a point $u^{(0)}$ very far from the optimum. Clearly, after $j$ iterations the point $u^{(j)}$ is in a completely different region of the admissible set compared to the one of the starting point. Hence, the basis functions built for $u^{(0)}$ might give a negligible contribution in spanning the reduced-order model at the point $u^{(j)}$. If this is the case, we can expect that these functions will not play any further role also for the subsequent points and therefore they can be removed to reduce the dimension of the RB space. Our methodologies for removing basis functions are then based on Remark~\ref{Remark:RBstrategies} and try to check which basis functions give a negligible contribution for the current iteration of the TR-RB algorithm. Notice that every technique we propose from now on will be applied after updating the RB space in the TR-RB algorithm. The aim is to modify the updated RB space in order to provide a new RB space, where the number of basis functions is reduced.

\noindent
\textbf{Technique T1.} The first proposed technique is based on the computation of the so-called Fourier coefficients. Given $v\in V$ and a set of orthonormal basis functions $\{\psi_n\}_{n=1}^\ell\subset V^\ell$, the $n$-th \textup{Fourier coefficient} is defined as $c^{(n)}_\mathcal{F}(v):= \langle v,\psi_n\rangle_V$. Now, T1 consists in computing $c^{(n)}_\mathcal{F}(\mathcal{S}(u^{(j+1)}))$ and $c^{(n)}_\mathcal{F}(\mathcal{A}_i(u^{(j+1)}))$, $i=1,\ldots,k$, for $n=1,\ldots,\ell$ and remove the basis function $\psi_n$ for which 
\[
\zeta^{(n)}:=\max\left\{\frac{c_\mathcal{F}^{(n)}(\mathcal{S}(u^{(j+1)}))^2}{\sum_{\eta=1}^{\ell} c_\mathcal{F}^{(\eta)}(\mathcal{S}(u^{(j+1)}))^2},\max_{i=1,\ldots,k}\left\{\frac{c_\mathcal{F}^{(n)}(\mathcal{A}_i(u^{(j+1)}))^2}{\sum_{\eta=1}^{\ell} c_\mathcal{F}^{(\eta)}(\mathcal{A}_i(u^{(j+1)}))^2}\right\}\right\}
\]
is below a certain tolerance. Note, in fact, that the Fourier coefficients indicate the order of magnitude of the contribution of a given basis function in reconstructing the new snapshots that we want to add to update the RB. Strategy T1 is also based on the assumption that the snapshots, which we want to include in an update, are the most relevant for the new TR subproblem, because they correspond to the last accepted optimization step $u^{(j+1)}$. The advantage of T1 is that the required Fourier coefficients are already available from the Gram-Schmidt orthogonalization performed during the update of the RB space. There is, anyway, a possible drawback of T1 due to the tolerance we set: it can happen that also important basis functions are removed although one thinks that the tolerance is small enough. Because of this, we would like to have a criteria to decide in an unbiased way which basis functions should be removed.

\noindent
\textbf{Technique T2.} This approach is based on the idea that once a point $u^{(j+1)}$ is accepted by the TR-RB algorithm and the RB space is updated, we will compute a provisional AGC point $u^{(j+1),\mathsf{prov}}_{\mathsf{AGC}}$ (cf. Definition~\ref{Definition:MOR:TRRB:CauchyPoint}) with respect to the previously updated RB space. One robustness criteria that we demand is that after removing basis functions, this provisional AGC point is still inside the new TR \footnote{Note that the TR depends on the reduced-order model due to the inequality constraint in \eqref{Equation:TRsubproblem} and, therefore, changes if we remove basis functions.}, although it might not coincide with the actual AGC point $u^{(j+1)}_{\mathsf{AGC}}$ that we compute after removing basis functions according to Line~\ref{ComputeAGC} in Algorithm~\ref{Algorithm:MOR:TRRB} \footnote{Note that the reduced-order cost function changes by removing a basis function, so that also the first term in \eqref{CauchyPoint1} differs after this removal.}. If we do not demand this robustness criteria, we can expect a deterioration of the TR performances due to lack of accuracy of the RB model in the steepest descent direction. Another important aspect is to guarantee the convergence of the TR-RB method, which implies checking that the conditions for accepting the point $u^{(j+1)}$ are still fulfilled, although we removed basis functions. 

In summary, the difference with respect to T1 is then to remove basis functions starting from the one with the smallest value of $\zeta^{(n)}$ and proceeding in ascending order until one of the following conditions is satisfied
\begin{subequations}
	\label{T2Conditions}
	\begin{align}
	\frac{\Delta_{\JJ^{\ell-\mathsf{rem},(j+1)}}(u^{(j+1),\mathsf{prov}}_{\mathsf{AGC}})}{\JJ^{\ell-\mathsf{rem},(j+1)}(u^{(j+1),\mathsf{prov}}_{\mathsf{AGC}})} &>  \beta_q\delta^{(j+1)} ,\label{T2RmvBasa} \\
	\frac{\Delta_{\nabla\JJ^{\ell-\mathsf{rem},(j+1)}}(u^{(j+1),\mathsf{prov}}_{\mathsf{AGC}})}{\big\|\nabla\JJ^{\ell-\mathsf{rem},(j+1)}(u^{(j+1),\mathsf{prov}}_{\mathsf{AGC}})\big\|_\Us} &>  \min \{ \tau_{\mathsf{grad}} , \beta_{\mathsf{grad}} \delta^{(j+1)} \}, \label{T2RmvBasb}\\
	\frac{\big\| \nabla \mathcal{J}^{\ell-\mathsf{rem},(j+1)}(u^{(j+1)}) - \nabla \mathcal{J}(u^{(j+1)}) \big\|_\Us}{\big\| \nabla \mathcal{J}^{\ell-\mathsf{rem},(j+1)}(u^{(j+1)}) \big\|_\Us} &>  \min \{ \tau_{\mathsf{grad}} , \beta_{\mathsf{grad}} \delta^{(j+1)} \}, \label{T2RmvBasc} \\
	\frac{\big| g(u^{(j+1)}) - g^{\ell-\mathsf{rem},(j+1)}(u^{(j+1)}) \big|}{g^{\ell-\mathsf{rem},(j+1)}(u^{(j+1)})} &> \tau_g, \label{T2RmvBasd} \\
	\JJ^{\ell-\mathsf{rem},(j+1)}(u^{(j+1)}) &> \JJ^{\ell,(j)}(u^{(j)}_\mathsf{AGC}) ,\label{T2RmvBase}\\
	\mathcal{J}^{\ell-\mathsf{rem},(j+1)} \big( u^{(j+1),\mathsf{prov}}_{\mathsf{AGC}} \big) - \mathcal{J} (u^{(j+1)}) &> - \kappa_{\mathsf{arm}} \big\| u^{(j+1),\mathsf{prov}}_{\mathsf{AGC}} - u^{(j+1)} \big\|_\Us^2. \label{T2RmvBasf}
	\end{align}
\end{subequations}
If one of the conditions \eqref{T2Conditions} holds we re-add the basis function to the RB space and finish the removal continuing with the TR-RB procedure. T2 is summarized in Algorithm~\ref{Algorithm:T2}.

\begin{algorithm}
	\caption{Summary of T2
		\label{Algorithm:T2}}
	\begin{algorithmic}[1]
		\STATE Follow the steps in Algorithm~\ref{Algorithm:MOR:TRRB} until the RB model is updated at $u^{(j+1)}$;
		\STATE  \label{T2:firststep} Compute a provisional AGC point $u^{(j+1),\mathsf{prov}}_{\mathsf{AGC}}$ by using the reduced-order cost function w.r.t. the updated RB model;
		\STATE Compute $\zeta^{(n)}$ for $n \in \{1,\ldots,\ell\}$;
		\WHILE {None of the conditions in \eqref{T2Conditions} is fullfiled}
		\STATE Out of all remaining basis functions, remove the one with the smallest value of $\zeta^{(n)}$ from the RB space;
		\ENDWHILE 
		\STATE \label{T2:laststep} Add the last removed basis function to the RB space;
		\STATE Proceed with Algorithm~\ref{Algorithm:MOR:TRRB} with the RB space obtained performing Steps~\ref{T2:firststep}-\ref{T2:laststep};
	\end{algorithmic}
\end{algorithm}

Let us explain the meaning of \eqref{T2Conditions}. At first, the superindex $\ell-\mathsf{rem}$ indicates that the space used to compute the quantity is the RB space obtained after removing a basis function. Condition \eqref{T2RmvBasa} is to check that the provisional AGC point will remain inside in an accurate-enough region of the TR. Condition \eqref{T2RmvBasb} is in the spirit of \eqref{T2RmvBasa} but for the gradient of the objective. Conditions~\eqref{T2RmvBasc}-\eqref{T2RmvBasd} are based on the skipping enrichment criteria and are checked to ensure convergence and robustness of the method after the removal. For a similar issue we need to check that the sufficient decrease condition is fulfilled as well (cf. \eqref{T2RmvBase}). Finally, \eqref{T2RmvBasf} is to enforce that the provisional AGC point is still a Cauchy point. In such a way, we are sure that Algorithm~\ref{Algorithm:MOR:TRRB} converges even after performing the basis removal (cf. \cite{Kei21,Ban22}). In this sense, T2 introduces an unbiased way to deal with the technique introduced in T1. There are still a few aspects one should comment on before implementing T2. At first, note that all the above-mentioned conditions are cheaply computable, since they are based either on reduced-order quantities or the appearing full-order quantities are available because of the RB update. At second, conditions \eqref{T2RmvBasa} and \eqref{T2RmvBasb} request efficient and reliable error estimators. Although for the PSM the efficiency of $\Delta_\JJ^{\ell,(j)}$ is acceptable, it is not the same for an error estimator $\Delta_{\nabla\JJ}^{\ell,(j)}$ based on the a-posteriori estimates of the gradients of the individual objectives. These estimators generally produce a huge overestimation, which makes them useless in practice. We notice, in fact, that condition \eqref{T2RmvBasb} is immediately triggered in the case of the PSM and we can not remove any basis function. This is the reason why we solved this issue by two different related approaches:

\noindent
\textbf{Technique T2a.} We replace the numerator of \eqref{T2RmvBasb} by
\[
\big\| \nabla \mathcal{J}^{\ell-\mathsf{rem},(j)}(u^{(j+1),\mathsf{prov}}_{\mathsf{AGC}}) - \nabla \mathcal{J}(u^{(j+1),\mathsf{prov}}_{\mathsf{AGC}}) \big\|_\Us,
\]
which is the true error we wanted to estimate, but it is unfortunately costly. It requires the computation of the full-order quantities $\mathcal{S}(u^{(j+1),\mathsf{prov}}_{\mathsf{AGC}})$ and $\mathcal{A}_i(u^{(j+1),\mathsf{prov}}_{\mathsf{AGC}})$, $i=1,\ldots,k$.

\noindent
\textbf{Technique T2b.} We replace the numerator of \eqref{T2RmvBasb} by
\[
\big\|\nabla \mathcal{J}^{\ell-\mathsf{rem},(j)}(u^{(j+1),\mathsf{prov}}_{\mathsf{AGC}}) - \nabla \mathcal{J}^{\ell,(j+1)}(u^{(j+1),\mathsf{prov}}_{\mathsf{AGC}}) \big\|_\Us
\]
which is a cheap approximation of the true error that we suppose to be reliable only after enough steps of Algorithm~\ref{Algorithm:MOR:TRRB}, however.

\noindent
Clearly, if one has a good estimation of the gradient at hand, T2 can be still used in its original form.

\noindent
\textbf{Technique T3.} Another drawback of T2 is the fact that we first need to remove the basis function in order to check \eqref{T2Conditions}. This implies that when we stop the removal, we need to add back the last basis function which was removed, because it is containing important information; cf. Line~\ref{T2:laststep} of Algorithm~\ref{Algorithm:T2}. This results in a waste of time for the modified Algorithm~\ref{Algorithm:MOR:TRRB}. We decide to add the option of introducing numerical tolerances for each of the conditions \eqref{T2Conditions}. In such a way, the modified algorithm will generally stop before an important basis function is removed at the price of possibly leaving one or a few redundant basis functions in the RB space. We think that this is a meaningful modification regarding the time that is wasted reintroducing the removed basis function into the RB space; cf. Section~\ref{Section:numerics}. We indicate this last strategy as T3.

\section{Numerical experiments}
\label{Section:numerics}

In this section we test Algorithm~\ref{Algorithm:MOR:TRRB} and compare it with the results obtained in \cite[Section~3.2.2]{Ban21}. We use the same numerical setting, which we briefly report here. Let the domain $\Omega$ be the two-dimensional unit square, split into four different subdomains $\Omega_1 = (0,0.5) \times (0,0.5)$, $\Omega_2 = (0,0.5) \times (0.5,1)$, $\Omega_3 = (0.5,1) \times (0,0.5)$ and $\Omega_4 = (0.5,1) \times (0.5,1)$. For each $\Omega_i$, we consider a corresponding diffusion coefficient $u_i^{\kappa}\in\mathbb R$ in \eqref{Equation:EllipticModelProblem:StateEquation} for $i=1,\ldots,4$. The reaction term $r(x)$ is set to be constantly equal to $1$ for any $x\in\Omega$. We impose homogeneous Neumann boundary conditions (i.e., $\alpha=0$) and a source term $f(x)= \sum_{i=1}^4  c_i \chi_{\Omega_i}(x)$ with $c_1 \approx 2.76 $, $c_2 \approx -0.96 $, $c_3 \approx 0.51 $ and $c_4 \approx -1.66 $ generated randomly in order to obtain a problem with a non-convex Pareto front. For the spatial discretization of the state equation, we apply the Finite Element (FE) method with 1340 nodes and piecewise linear basis functions. For \eqref{Equation:EllipticModelProblem:MPOP} we choose the following three objectives
\begin{align*}
\Jhat_1(u) & := \frac{1}{2}\,\big\|\mathcal{S}(u) - y_\Omega^{(1)}\big\|_H^2 + \frac{\varepsilon}{2}\,\big\|u - u_d^{(1)}\big\|_\Us^2, \\
\Jhat_2(u) & := \frac{1}{2}\,\big\|\mathcal{S}(u) - y_\Omega^{(2)}\big\|_H^2 + \frac{\varepsilon}{2}\,\big\|u - u_d^{(2)}\big\|_\Us^2,\quad \Jhat_3(u)  := \frac{0.05}{2}\,\big\| u - u_d^{(3)}\big\|_\Us^2
\end{align*} 
with $\varepsilon=0.002$, the desired states
\begin{align*}
y_\Omega^{(1)}(x) := \chi_{(0,0.5) \times (0,1)}(x), \quad y_\Omega^{(2)}(x) := \chi_{(0.5,1) \times (0,1)}(x),
\end{align*}
and the desired parameter values
\begin{align*}
u_d^{(1)} = u_d^{(2)}  := (2,0,0,0,0.3)^T, \quad u_d^{(3)} & := (2,1,1,1,0.3)^T.
\end{align*}
The lower and upper parameter bounds are given by 
\[
u_a = (2,0.1,0.1,0.1,0.3)^T \quad \text{ and } \quad u_b = (2,4,4,4,0.3)^T,
\] 
respectively. This implies that $u_1^{\kappa} = 2$ and $u^r = 0.3$ are seen as constants and we only optimize over the three parameters $u_2^{\kappa}$, $u_3^{\kappa}$ and $u_4^{\kappa}$. Note furthermore, that the desired parameters $u_d^{(1)} = u_d^{(2)}$ are not admissible. In fact, as for the parameters of the source term, they were chosen such that the resulting Pareto front is non-convex. 
\begin{figure}[H]
	\centering
	\subfigure[]{\includegraphics[width=6.5cm]{./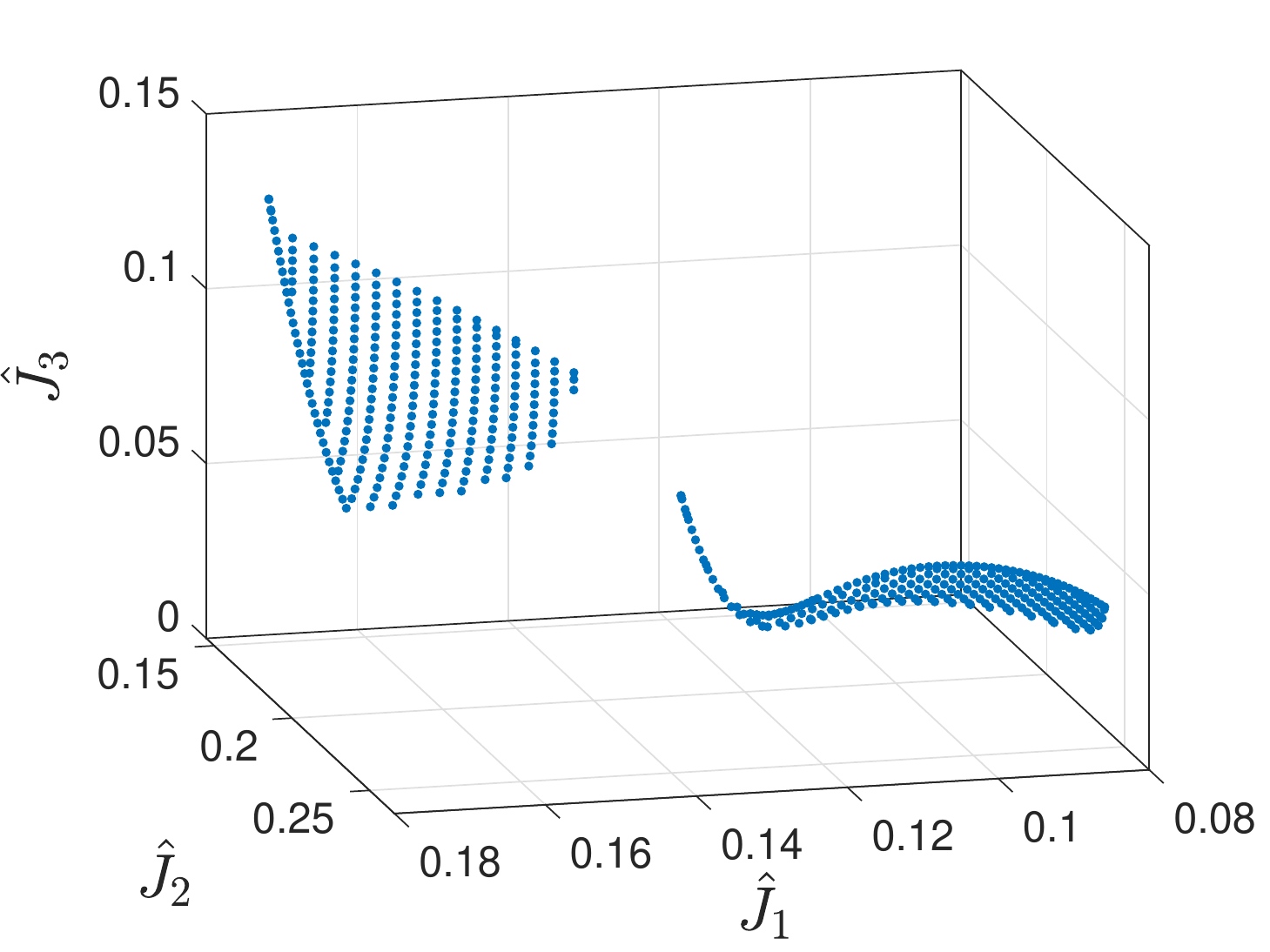}}
	\hspace{1mm}
	\subfigure[]{\includegraphics[width=6.5cm]{./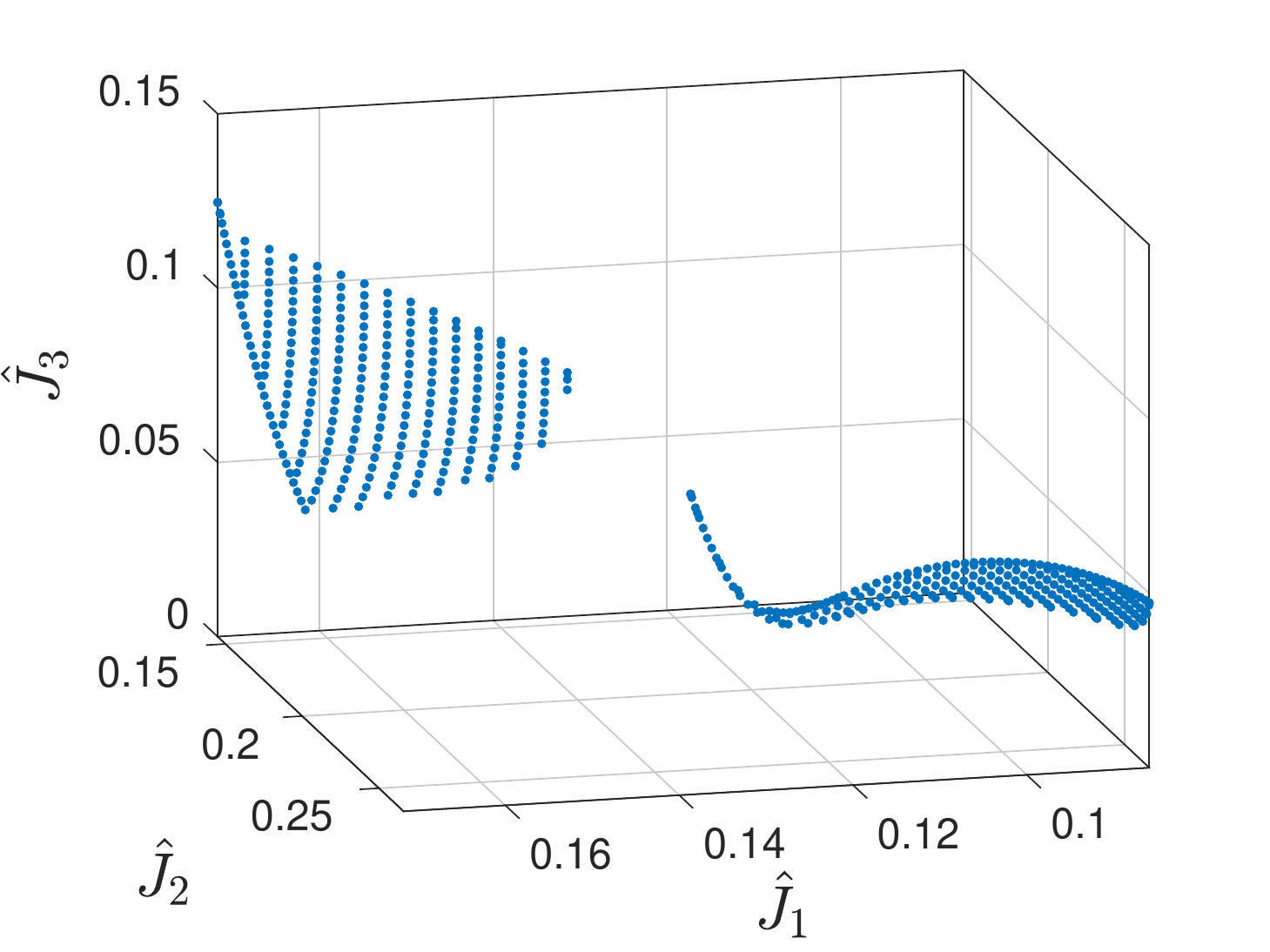}}
	\caption{(\textbf{a}) Algorithm~\ref{Algorithm:MOR:TRRB} no Removal local RB spaces. (\textbf{b}) Algorithm~\ref{Algorithm:MOR:TRRB} T3 local RB spaces. \label{Fig:ParetoFronts}}
\end{figure}
For the choice of the initial value for PSPs corresponding to reference points for the entire problem $(\Jhat_1,\Jhat_2,\Jhat_3)$ we do the following: Let $\ubar^i$ be the minimizer of $\Jhat_i$ for $i = 1,2,3$. Recall that the sets $D_i$ have been introduced in Definition~\ref{Definition:yidealpoint}-(ii). Then, if $z \in D_i$, we choose $\ubar^i$ as the initial value for solving \eqref{Equation:PascolettiSerafiniScalarization}. We additionally choose the shifting vectors $\tilde{d} = 0.001 \cdot (1,1,1)^T$, while the grid size $h$ for the reference point grid is set to $h_\textsl{PSM} = 0.003$. For detailed comments and results on the PSM applied on the FE and RB level, we refer to \cite[Section~3.2.2]{Ban21}. We report here only the necessary ones on RB level for a comparison with our proposed technique. Before doing that, let us mention that the tolerance chosen in T1 (cf. Section~\ref{Section:TRRB:ReduceNumberBasis}) for the Fourier coefficient is $10^{-6}$. Similarly, we choose the same tolerance for T3 in order to break the removal algorithm before deleting important basis functions, i.e. we subtract it on the right-hand side of \eqref{T2RmvBasa}-\eqref{T2RmvBasf}. At first, to validate our approach, we show in Figure~\ref{Fig:ParetoFronts} the obtained Pareto fronts by using the method in \cite{Ban21} (left) and our method (right). As one can see, there is no visible difference. The approximation error is, in fact, of the order of $10^{-6}$ for a Pareto point computed by all the proposed techniques (i.e., T1, T2a, T2b and T3) on average. 
\begin{figure}[H]
	\centering
	\includegraphics[width=7.5 cm]{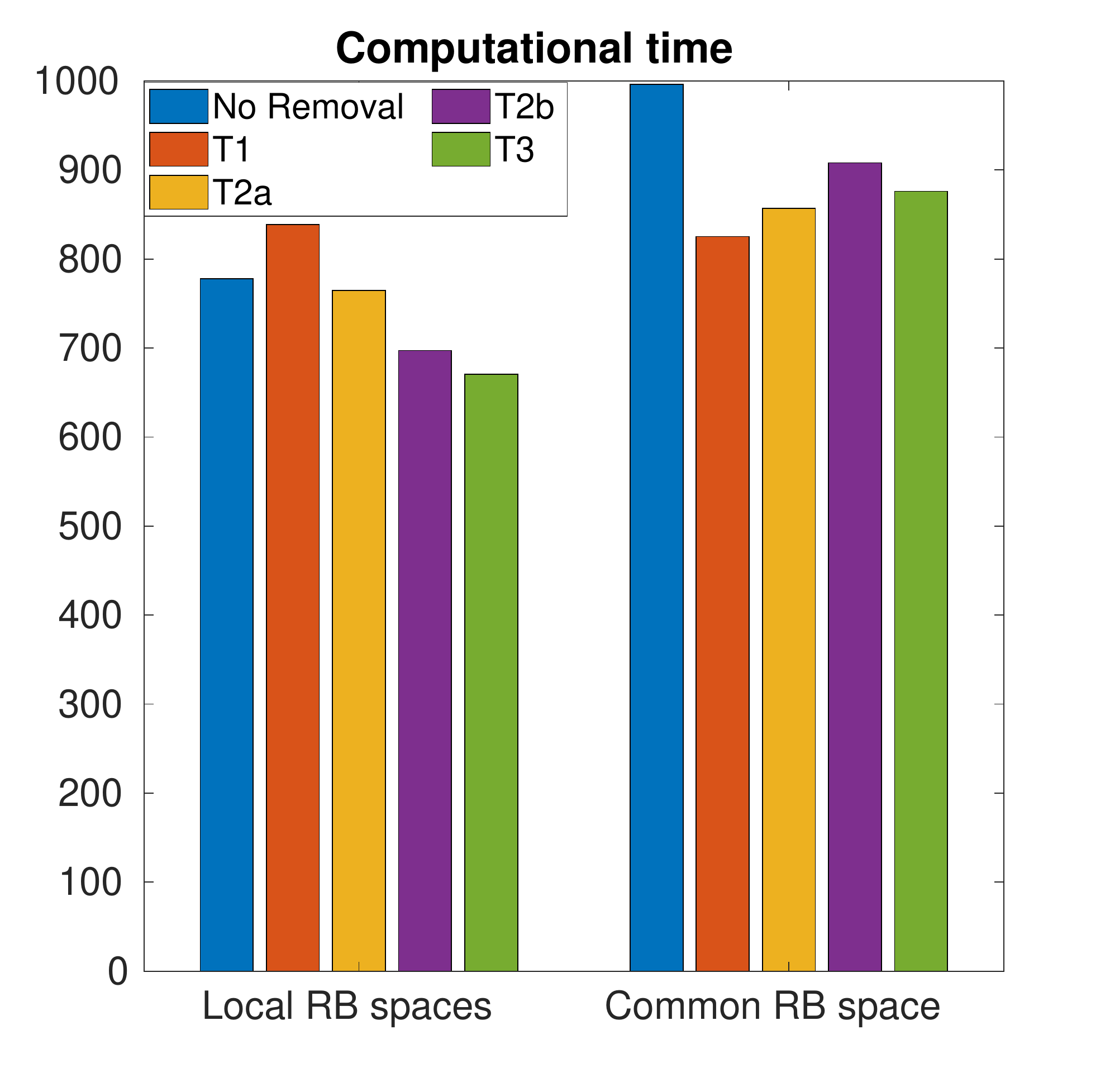}
	\caption{Computational times in seconds for Algorithm~\ref{Algorithm:MOR:TRRB} with or without basis removal and using the two strategies in Remark~\ref{Remark:RBstrategies} for initializing the RB space. \label{Fig:Times}}
\end{figure}
In Figure~\ref{Fig:Times} we compare the computational time of Algorithm~\ref{Algorithm:MOR:TRRB} with all the proposed techniques to the one of the algorithm in \cite{Ban21}. As one can see, we get a speed-up by using the proposed techniques in almost all cases. Depending on the strategy from Remark~\ref{Remark:RBstrategies}, one technique performs better than the others. Here we try to explain this phenomena in detail. Let us focus on the common RB space first. In this case, every technique helps in saving computational time. This is clearly the effect of removing redundant basis functions, which are particularly frequently included using a large common RB space. This is the reason why T1 appears to be the most effective, since it is the cheapest among the techniques (as we said it does not imply additional cost to be checked). T2a is more robust, but it comes with the price of evaluating the full-order gradient at the new AGC point and thus results to be slower than T1. Apparently, T2b should overcome this problem, but the inaccuracy of the RB space in the beginning give a bad approximation of \eqref{T2RmvBasb}, resulting in removing too many basis functions which leads to a worse approximation for the consecutive steps. This worsening of the approximation results in a way larger number of enrichment steps towards the end of the algorithm, which also negatively influences the computational time. T3 is comparable with T2a, meaning that for this example we are removing many basis functions in only a few instances, rather than frequently removing a few basis functions. Figure~\ref{Fig:Numberofbasis}(b) confirms the above remarks for the case of a common RB space. In this figure we report the number of basis functions obtained at the end of Algorithm~\ref{Algorithm:MOR:TRRB} while this is applied to compute each Pareto optimal point in the PS method.

Now, let us focus on the left group of columns in Figure~\ref{Fig:Times} (and thus on Figure~\ref{Fig:Numberofbasis}(a)), which corresponds to the computational times in the case of using local RB spaces (cf. Remark~\ref{Remark:RBstrategies}). This case is a bit more delicate, since the use of local RB spaces makes it more difficult to interpret the results. Here the problem of T1 is emerging. The fact that this technique removes a number of basis functions without any robustness criteria implies that the method slows down. In the case of local spaces, in fact, we do not have the same amount of redundant basis functions as it can occur for a common RB space. Therefore, we should only remove the basis functions which are actually redundant. As one can note in Figure~\ref{Fig:Numberofbasis}(a), T1 removes a significantly larger amount of basis functions in comparison to the other techniques. Here the criteria introduced in T2a play their role in a positive way. We can counteract the effect of T1 in such a way that the computational time is comparable to the one in \cite{Ban21}. The further simplification introduced in T2b helps to get an additional speed-up. In contrast to the common RB space, here we have local spaces which provide a sufficiently good accuracy for approximating \eqref{T2RmvBasb} also in the beginning of the optimization. This is then beneficial for the algorithm, since the cost of computing the criteria in T2b is way cheaper than T2a, where we need full-order solves of the state and adjoint equation to compute the gradient at the new AGC point. Additionally, T3 further improves T2a and T2b in terms of computational time, since in the case of local RB spaces it is more probable that we indeed remove only a few basis functions but more frequently than in the case of one common RB space. In this case, it is important to have tolerances that let us stop before removing an important basis function and save time for reintroducing it in the RB space. 
\begin{figure}
	\centering 
	\subfigure[]{\includegraphics[width=6.8cm]{./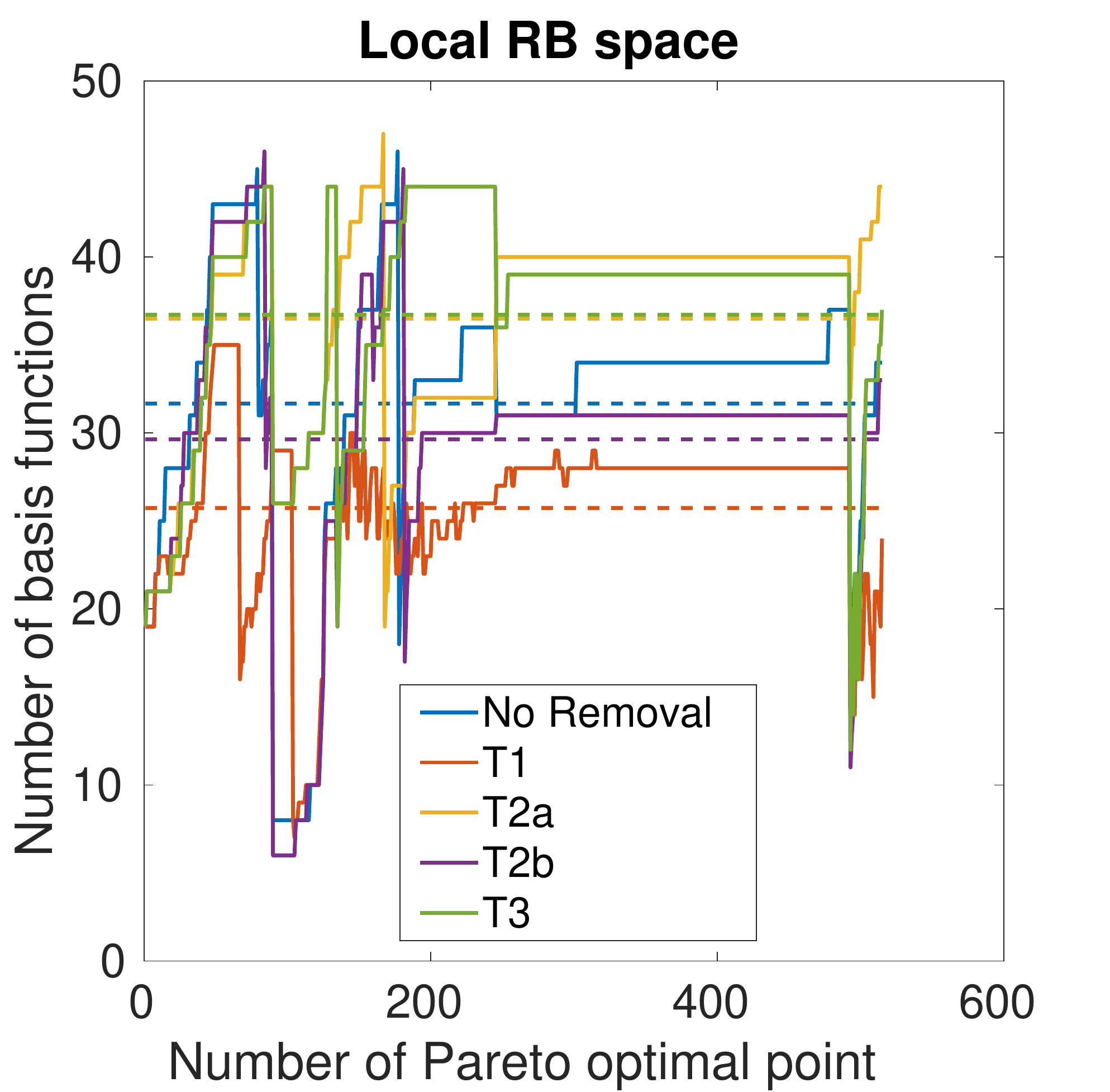}}
	\hspace{1mm}
	\subfigure[]{\includegraphics[width=6.8cm]{./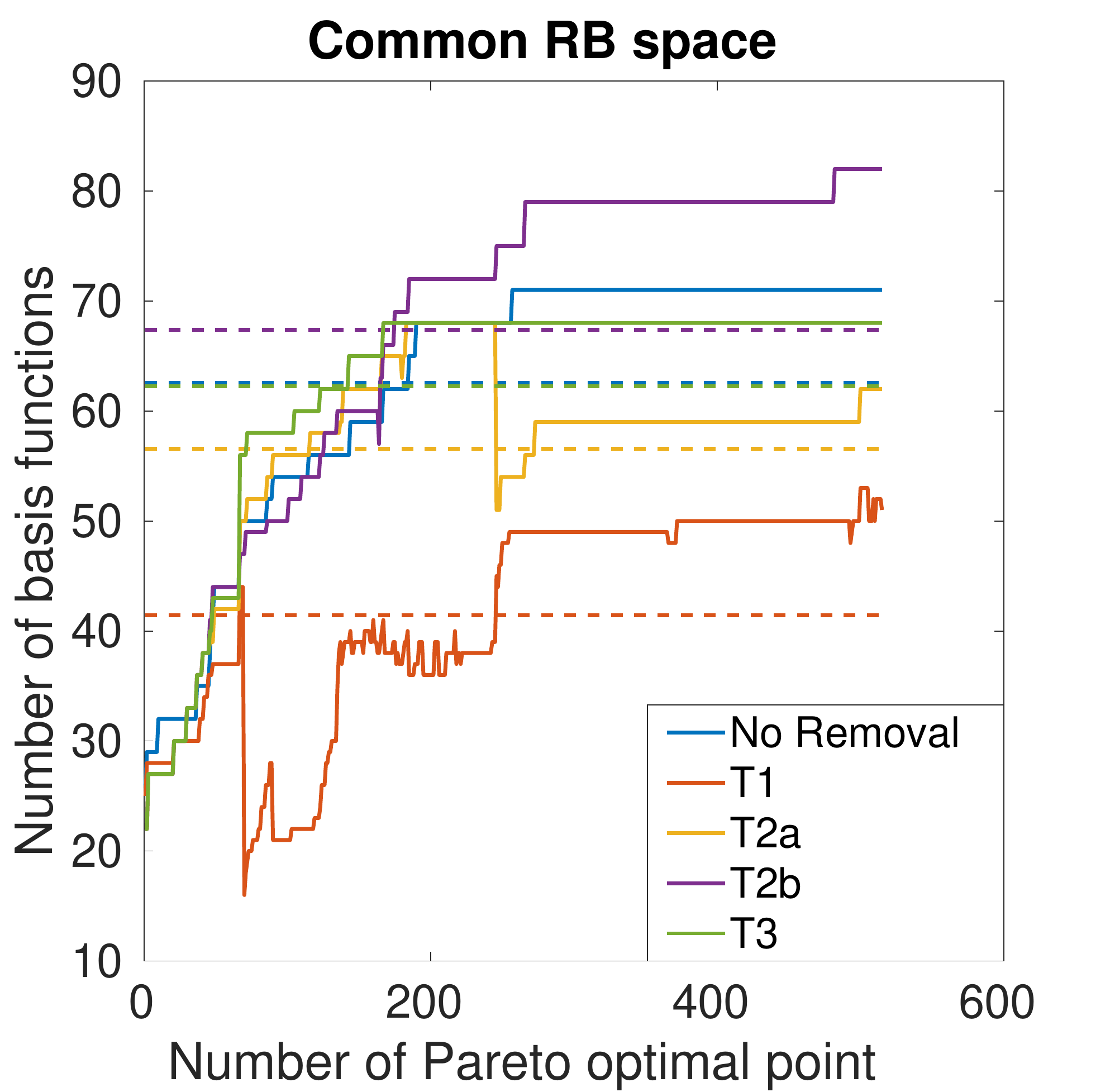}}
	\caption{Number of basis functions used to compute each Pareto optimal point. (\textbf{a}) Local RB space. (\textbf{b}) Common RB space. Dashed lines: average number of basis functions.\label{Fig:Numberofbasis}}
\end{figure}
In conclusion, comparing our fastest method (i.e., Algorithm~\ref{Algorithm:MOR:TRRB} with local RB spaces and T3) to the slowest (i.e., using \cite{Ban21} with a common RB space) we get essentially the same results (the approximation error is $10^{-6}$) with half of the time, which is roughly 500 seconds. This shows how one should invest time and resources in providing efficient techniques for reducing the number of basis functions in the RB space, while using an adaptive TR-RB algorithm. Particularly in the case of multiobjective optimization, this becomes crucial for a large number of cost functionals $k$. To obtain the same resolution of the Pareto front as in Figure~\ref{Fig:ParetoFronts} for a large $k$, we will need to solve the PSPs for many more points, implying higher risk of having redundant basis functions.

\section{Conclusions}
We presented and analyzed novel ways of reducing the dimension of the RB space during the optimization procedure. To our knowledge, this has not been addressed yet for the RB method, although it is common for other model order reduction techniques. Such a removal significantly improved the performances of the TR-RB algorithm in the context of multiobjective optimization, leading faster to an accurate solution than the already existing techniques. These removal techniques can also be extended to other applications in which sequential parametric PDE-constrained optimization problems must be solved. In future work, one can try to achieve further improvements concerning robustness of the method and deriving tighter a-posteriori error estimators, in particular for the gradient of the cost function. This is also of great interest in the RB community.


\section*{Acknowledgments}
The authors acknowledge funding by the Deutsche Forschungsgemeinschaft (DFG) for the project Localized Reduced Basis Methods for PDE-constrained Parameter Optimization under contract VO 1658/6-1. The authors thank Tim Keil, Mario Ohlberger and Felix Schindler from University of M\"unster (Germany) for the fruitful exchange of ideas on the topic.


\end{document}